%
%
%
%
\documentclass{amsart}

\usepackage{mathrsfs}
\usepackage{graphicx}

\makeatletter

\newcommand{\Rmnum}[1]{\expandafter\@slowromancap\romannumeral #1@}
\makeatother

\newtheorem{theorem}{Theorem}[section]
\newtheorem{lemma}[theorem]{Lemma}

\theoremstyle{definition}

\newtheorem{corollary}[theorem]{Corollary}
\theoremstyle{remark}
\newtheorem{remark}[theorem]{Remark}
\newtheorem{question}[theorem]{Question}
\numberwithin{equation}{section}



\AtEndDocument{\bigskip{\footnotesize%
  \textsc{Department of Mathematics, Ningbo University, Ningbo, Zhejiang, People's Republic of China} \par
  \textit{E-mail address:} \texttt{kanjiangbunnik@yahoo.com, jiangkan@nbu.edu.cn} \par
}}
\AtEndDocument{\bigskip{\footnotesize%
  \textsc{Department of Mathematics, Ningbo University, Ningbo, Zhejiang, People's Republic of China} \par
  \textit{E-mail address:} \texttt{283186364@qq.com} \par
}}
\AtEndDocument{\bigskip{\footnotesize%
  \textsc{Department of Mathematics, Ningbo University, Ningbo, Zhejiang, People's Republic of China} \par
  \textit{E-mail address:} \texttt{740464942@qq.com} \par
}}
\AtEndDocument{\bigskip{\footnotesize%
  \textsc{Department of Mathematics, Ningbo University, Ningbo, Zhejiang, People's Republic of China} \par
  \textit{E-mail address:} \texttt{2017084162@qq.com} \par
}}
\begin{document}
\title{Multiple representations of real numbers on self-similar sets with overlaps}
\author{Kan Jiang, Xiaomin Ren, Jiali Zhu and Li Tian}

\begin{abstract}
Let $K$  be the attractor of the following IFS
 $$\{f_1(x)=\lambda  x, f_2(x)=\lambda x +c-\lambda,f_3(x)=\lambda x +1-\lambda\}, $$
 where  $f_1(I)\cap f_2(I)\neq \emptyset, (f_1(I)\cup f_2(I))\cap f_3(I)=\emptyset,$
and  $I=[0,1]$ is the convex hull of $K$. 
The main results of this paper are  as follows: 
$$\sqrt{K}+\sqrt{K}=[0,2]$$ if and only if 
$$\sqrt{c}+1\geq 2\sqrt{1-\lambda},$$
where $\sqrt{K}+\sqrt{K}=\{\sqrt{x}+\sqrt{y}:x,y\in K\}$.
If $c\geq (1-\lambda)^2$, then $$\dfrac{K}{K}=\left\{\dfrac{x}{y}:x,y\in K, y\neq 0\right\}=\left[0,\infty\right).$$ As a consequence, we  prove that the  following conditions are equivalent:
\begin{itemize}
\item [(1)] For any $u\in [0,1]$, there are some $x,y\in K$ such that $u=x\cdot y;$
\item [(2)] For any $u\in [0,1]$, there are some $x_1,x_2,x_3,x_4,x_5,x_6,x_7,x_8,x_9,x_{10}\in K$ such that $$u=x_1+x_2=x_3-x_4=x_5\cdot x_6=x_7\div x_8=\sqrt{x_9}+\sqrt{x_{10}};$$
\item [(3)] $c\geq (1-\lambda)^2$.
\end{itemize}
\end{abstract}

\maketitle{}

\section{Introduction}
There are many methods which can represent real numbers. For instance, the $\beta$-expansions \cite{KO,Akiyama,BakerG,KM,DajaniDeVrie,EHJ,MK,GS}, the
continued fractions \cite{KarmaOomen,KCor},  the {L}\"uroth
expansions \cite{KarmaCor1}, and so on.  In this paper, we shall analyze a new representation, that is, the 
 arithmetic representation of real numbers  in terms of some  self-similar sets with overlaps.  Firstly, let us introduce some fundamental definitions and results. 
Let  $A,B\subset\mathbb{R}$ be two non-empty sets. Define $$A*B=\{x*y:x\in A, y\in B\},$$ where $*$ is  $+, -,\cdot$ or $\div$ (if  $*=\div$,  then we assume $y\neq 0$).  We call $u=x*y$ an arithmetic representation in terms of $A$ and $B$. 
Steinhaus \cite{HS} proved that 
\begin{equation*}
C-C=\{x-y:x,y\in C\}=[-1,1], 
\end{equation*}%
where $C$ is 
the middle-third Cantor set.  Recently, Athreya, Reznick and Tyson \cite{Tyson} considered the multiplication on $C$, and proved
that
\begin{equation*}
17/21\leq \mathcal{L}(C\cdot C)\leq 8/9,
\end{equation*}%
where $\mathcal{L}$ denotes the Lebesgue measure. In \cite{XiKan3}, Jiang and Xi proved that $C\cdot C$ contains countably many intervals.  Moreover, they also came up with a sufficient condition such that  the image of $C\times C$ under some continuous functions contains a non-empty interior. The readers can find more results  in \cite{Yuki,PS, Hochman2012, Yoccoz, Shmerkin,Kan2019,SumKan}.
Athreya, Reznick and Tyson \cite{Tyson} also investigated the division of $C$, namely the set $C\div C$, and proved that $C\div C$ is  precisely the countably  union of some  closed intervals. In \cite{XiKan2}, Jiang and Xi  considered the representations of  real numbers  in $ C-C=[-1,1]$, i.e. let $x\in [-1,1]$, define 
\begin{equation*}
S_{x}=\left\{ \mathbf{(}y_{1},y_{2}\mathbf{)}:y_{1}-y_{2}=x,\,\, (y_1,y_2)\in C\times C\right\} .
\end{equation*}
and 
\begin{equation*}
U_{r}=\{x:\sharp(S_{x})=r\},  r\in \mathbb{N}^{+}. 
\end{equation*}
They proved that $\dim_{H}(U_r)=\dfrac{\log 2}{\log 3}$ if  $r=2^k$ for some $k\in \mathbb{N}$. Moreover, $$0<\mathcal{H}^{s}(U_1)<\infty, \mathcal{H}^{s}(U_{ 2^k})=\infty,k\in \mathbb{N}^{+},$$ 
where $s=\dfrac{\log 2}{\log 3}$. 
$U_{3\cdot 2^k}$ is an infinitely countable set for any $k\geq1$, where $\dim_{H}$ and $\mathcal{H}^{s}$ denote the Hausdorff dimension and Hausdorff measure, respectively.  There are more general results in \cite{XiKan2}. 
In this paper, we shall analyze the following self-similar set with overlaps \cite{Hutchinson}, 
let   $K$ be   the attractor  generated by the following IFS,  $$\{f_1(x)=\lambda x, f_2(x)=\lambda x+c-\lambda,f_3(x)=\lambda x+1-\lambda\},$$ where  $f_1(I)\cap f_2(I)\neq \emptyset, (f_1(I)\cup f_2(I))\cap f_3(I)=\emptyset,$
and  $I=[0,1]$ is the convex hull of $K$.
The  self-similar set $K$  is a typical example which allows serious overlaps. Many people analyzed this example from various aspects. Furstenberg conjectured that  the following self-similar set
 with the equation 
 $$K_1=\dfrac{K_1}{3}\cup \dfrac{K_1+\alpha}{3} \cup \dfrac{K_1+2}{3}$$
 has Hausdorff dimension for any irrational number $\alpha.$ Hochman \cite{Hochman} made use of elegant methods from ergodic theory proving that this conjecture is correct. 
Keyon \cite{Keyon},
 Rao and Wen \cite{Rao} proved that $\mathcal{H}^{1}(K)>0$ if and only if $\lambda=p/q\in \mathbb{Q}$ with $p\equiv q\not\equiv (0\equiv 3)$. Feng  and Lau \cite{FengLau} gave a multifractal analysis of $K$ when $K$ satisfies the weak separation property.  Yao and Li \cite{YaoLi}  analyzed all the generating IFS's of $K$ when $c=2\lambda-\lambda^2$. 
 Ngai and Wang \cite{NW} gave a definition of   finite type condition, and  an algorithm that  can calculate the Haudorff dimension of $K$ when  $K$ is of finite type.  In \cite{DJKL, KarmaKan2}, Dajani et al. analyzed the points in $K$ with multiple codings, and gave some examples such that the set of points with exactly $3$ codings can be empty while the set of points with precisely  $2$ codings has the same Hausdorff dimension of the univoque set. In \cite{Xi}, Guo et al. in terms of some ideas from \cite{NW},  considered the bi-Lipschitz equivalence of overlapping self-similar sets. In \cite{JWX}, Jiang, Wang and Xi  gave a necessary condition such that  when $c=\lambda-\lambda^2$  the self-similar set $K$ is bi-Lipschitz equivalent to another self-similar set with  the strong separation condtion.
 In \cite{XiKan1}, Tian et al.  proved that $K\cdot K=[0,1]$ if and only if $c\geq (1-\lambda)^2$. This result is sharp.  Moreover, Jiang and Xi \cite{XiKan3} also proved  the following result. 
Suppose that  $f$ is  a continuous function
defined on an open set $U\subset \mathbb{R}^{2}$. Denote the image
\begin{equation*}
f_{U}(K,K)=\{f(x,y):(x,y)\in (K\times K)\cap U\}.
\end{equation*}%
If $\partial _{x}f$, $\partial _{y}f$ are continuous on $U,$ and there is a
point $(x_{0},y_{0})\in (K\times K)\cap U$ such that one of the following conditions is satisfied, 
\begin{equation*}
\max\left\{\dfrac{1-c-\lambda}{\lambda}, \dfrac{1-\lambda}{1-c}\right\}<\left\vert \frac{\partial
_{y}f|_{(x_{0},y_{0})}}{\partial _{x}f|_{(x_{0},y_{0})}}\right\vert <\dfrac{1}{1-c-\lambda},
\end{equation*}
or
\begin{equation*}
\max\left\{\dfrac{1-c-\lambda}{\lambda}, \dfrac{1-\lambda}{1-c}\right\}<\left\vert \frac{\partial
_{x}f|_{(x_{0},y_{0})}}{\partial _{y}f|_{(x_{0},y_{0})}}\right\vert <\dfrac{1}{1-c-\lambda},
\end{equation*}
then $f_{U}(K,K)$ has a non-empty interior.
We emphasize that it is difficult to obtain the above results if one utilizes the Newhouse thickness theorem \cite{SN}. The main reason is that   it is not easy to calculate the thickness of $K$ as there are very complicated overlaps in $K$. 
For the Assouad dimension of $K$  and the geodesic distance on $K\times K$, we refer to \cite{XiA,XiB,XiC,XiD,XiE,XiF,XiG,XiH,XiI,SBB,YaoYao,LTT11,XiC11}.  For the average weighted receiving time on the complex networks  or on $K$, the readers can find results  in  the papers \cite{Lin, D3,D4}.

We have mentioned many results concerining with $K$ from different perspective. In this paper we shall consider the  multiple representations, i.e.  addition, substract, multiplication,  division and square root,  on $K$. This is the main motivation of this paper. In fact, similar analysis appears in the setting of $\beta$-expansions. For instance, the   multiple $\beta$-expansions and simultaneous expansions are considered by Komornik, Pedicini, and Peth\H{o} \cite{KPP}, Dajani, Jiang and Kempton \cite{DJK}, Hare and Sidorov\cite{Hare2016, Hare2017}, Dajani et al. \cite{KarmaKan2,DJKL}. The  simultaneous expansions  are related with the interior of the associated self-affine sets.  For more applications of the  simultaneous expansions, see \cite{G}.  

 The following  are the main results of this paper. 
 \begin{theorem}\label{Main}
Let $K$  be the attractors of the following IFS
 $$\{f_1(x)=\lambda  x, f_2(x)=\lambda x +c-\lambda,f_3(x)=\lambda x +1-\lambda\}, $$
 where  $f_1(I)\cap f_2(I)\neq \emptyset, (f_1(I)\cup f_2(I))\cap f_3(I)=\emptyset,$
and  $I=[0,1]$ is the convex hull of $K$. If $c\geq (1-\lambda)^2$, then $$\dfrac{K}{K}=\left\{\dfrac{x}{y}:x,y\in K, y\neq 0\right\}=\left[0,\infty\right).$$
\end{theorem}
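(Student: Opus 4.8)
The plan is to first reduce the two‑sided, unbounded statement to covering a single bounded interval of ratios, and then to establish that covering by a self‑similar interval‑overlap argument in which the hypothesis $c\ge(1-\lambda)^2$ plays the decisive role. First I would record two closure properties of $R:=K/K$. Since $f_1(x)=\lambda x$ maps $K$ into $K$, any representation $t=\frac{x}{y}$ with $x,y\in K$, $y\ne 0$, yields both $\lambda t=\frac{f_1(x)}{y}$ and $\frac{t}{\lambda}=\frac{x}{f_1(y)}$, where $f_1(x),f_1(y)\in K$ and $f_1(y)\ne 0$. Hence $R$ is invariant under multiplication by $\lambda$ and by $\lambda^{-1}$, so $\lambda^k[\alpha,\beta]\subseteq R$ for every integer $k$ whenever $[\alpha,\beta]\subseteq R$; the union of these scaled copies over all integers $k$ equals $(0,\infty)$ as soon as consecutive copies overlap, i.e.\ as soon as $\beta/\alpha\ge\lambda^{-1}$. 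Since $0=\frac{0}{1}\in R$ and $R\subseteq[0,\infty)$ trivially, the whole theorem reduces to the single inclusion $[\lambda,1]\subseteq K/K$, whose endpoint ratio is exactly $\lambda^{-1}$.

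Next I would reformulate this inclusion geometrically. For a word $\omega\in\{1,2,3\}^n$ write $f_\omega=f_{\omega_1}\circ\cdots\circ f_{\omega_n}$ and $I_\omega=f_\omega(I)$, an interval of length $\lambda^n$. Fixing $t\in[\lambda,1]$, it suffices to produce nested cylinders $I_{\sigma_n}$ (numerator) and $I_{\tau_n}$ (denominator) of equal length with $I_{\sigma_n}\cap t\,I_{\tau_n}\ne\emptyset$ for all $n$: then $x=\bigcap_n I_{\sigma_n}\in K$ and $y=\bigcap_n I_{\tau_n}\in K$, and choosing $p_n\in I_{\sigma_n}\cap t\,I_{\tau_n}$ and letting $n\to\infty$ gives $p_n\to x$ and $p_n\to ty$, whence $x=ty$, i.e.\ $t=\frac{x}{y}$. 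To guarantee $y\ne 0$ I would start the denominator in the top cylinder, $\tau_1=3$, so that $y\in f_3(K)\subseteq[1-\lambda,1]$; a short check shows a compatible numerator cylinder exists in both ranges of $t$ (for $t\ge 1-\lambda$ take $\sigma_1=3$, giving overlap $[1-\lambda,t]$; for $t<1-\lambda$ one has $t(1-\lambda)<(1-\lambda)^2\le c$, so $t\,I_{\tau_1}$ meets the solid bottom block $[0,c]$).

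The heart of the proof is the inductive extension step, which I would isolate as a covering lemma: if $X$ and $Y$ are intervals of equal length each carrying the level‑one structure of $K$ — a solid bottom block of relative length $c$ (the union of the two overlapping images $f_1,f_2$, which as a set is a full interval since $c\le 2\lambda$), a gap of relative length $g:=1-\lambda-c$, and a top block of relative length $\lambda$ — and if $X\cap tY\ne\emptyset$ with $t\in[\lambda,1]$, then some child subinterval of $X$ meets some child subinterval of $tY$. Because the children cover exactly the two blocks, this is equivalent to $X\cap tY\not\subseteq G_X\cup G_{tY}$, where $G_X,G_{tY}$ are the two gaps. Here is where $c\ge(1-\lambda)^2$ enters: it gives $g=1-\lambda-c\le(1-\lambda)-(1-\lambda)^2=\lambda(1-\lambda)$, so that $g<\lambda\le t$, and moreover each block dominates each gap, since $\lambda\ge\lambda(1-\lambda)\ge g$ and $c\ge(1-\lambda)^2\ge\lambda(1-\lambda)\ge g$ (the last inequality using $\lambda\le\tfrac12$). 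The inequality $g<t$ already forbids $tY$ from fitting inside $G_X$, and each endpoint of $X\cap tY$ that comes from $X$ (resp.\ from $tY$) lies in a block of $X$ (resp.\ of $tY$); the block‑dominates‑gap inequalities then prevent the two gaps from jointly swallowing the overlap.

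The main obstacle is precisely this lemma: one must verify, uniformly over $t\in[\lambda,1]$ and over the finitely many relative positions of $G_X$ and $G_{tY}$ inside the overlap, that $G_X\cup G_{tY}$ cannot cover $X\cap tY$. This is a careful but elementary interval computation, and tracing through it shows that the threshold for the block/gap inequalities is exactly $c=(1-\lambda)^2$, matching the sharp constant in the companion product theorem $K\cdot K=[0,1]$. Granting the lemma, the nested construction of the previous paragraph never stalls, so $[\lambda,1]\subseteq K/K$; combined with the scaling closures of the first step this upgrades to $K/K=[0,\infty)$, completing the proof.
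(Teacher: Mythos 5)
Your argument is correct, but it follows a genuinely different route from the paper. The paper works globally: its Lemma \ref{lem1} shows that for suitable basic intervals $I,J$ the whole ratio set $f(I,J)$ with $f(x,y)=x/y$ is unchanged when $I,J$ are refined into their children (checking $h_1\geq e_2$, $h_2\geq e_3$, $h_3\geq e_4$ under $a\geq 1-c-\lambda$), then invokes the refinement lemma (Lemma \ref{lem}) and splits into the cases $\lambda\geq\frac{3-\sqrt5}{2}$ (where $I=J=[1-\lambda,1]$ suffices) and $\lambda<\frac{3-\sqrt5}{2}$ (where a union of three second-level cylinders is needed and nine image intervals must be shown to overlap consecutively, Lemmas \ref{case1}--\ref{case2}). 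You instead fix a single target $t\in[\lambda,1]$, reduce via the $\lambda^{\pm1}$-invariance of $K/K$ to that one interval, and run a nested-cylinder construction whose inductive step is a two-gap covering lemma: the line $x=ty$ through a product of cylinders must pass through a product of child cylinders. I checked that this lemma is true and that your stated inequalities do drive it: writing $g=1-\lambda-c$, the four endpoint configurations of $X\cap tY$ reduce exactly to $t>g$, $tg<\lambda$ (i.e.\ $c+2\lambda\geq1$) and $c(1+t)\geq(1-\lambda)t$ (which follows from $2c+\lambda\geq1$) --- precisely the two inequalities the paper isolates in Lemma \ref{frequentlyused} as consequences of $c\geq(1-\lambda)^2$. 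Your approach buys a uniform treatment of all $t$ with no case split on $\lambda$, no positivity hypothesis on the numerator cylinder (the paper needs $a\geq1-c-\lambda$ to control multiplicative distortion, whereas your additive gap comparison does not), and no nine-interval computation; the price is that the covering lemma's case analysis, which you only sketch with the phrase ``prevent the two gaps from jointly swallowing the overlap,'' must actually be written out over the finitely many endpoint configurations before the proof is complete. Since that verification does go through, this is a matter of exposition rather than a gap.
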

 \begin{theorem}\label{Main1}
Let $K$  be the attractors  defined in the above theorem. Then 
$$\sqrt{K}+\sqrt{K}=[0,2]$$ if and only if
$$\sqrt{c}+1\geq 2\sqrt{1-\lambda}.$$
\end{theorem}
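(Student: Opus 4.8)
The plan is to prove the two implications separately: the necessity is elementary and the sufficiency rests on a covering argument tailored to the concavity of the square root.

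First I would read off the geometric content of the hypotheses. From $f_1(I)\cap f_2(I)\neq\emptyset$, $(f_1(I)\cup f_2(I))\cap f_3(I)=\emptyset$ and the fact that $I=[0,1]$ is the convex hull one gets $\lambda\le c$, $c<1-\lambda$ (hence $\lambda<1/2$), and, since $K=\bigcup_i f_i(K)\subseteq\bigcup_i f_i(I)=[0,c]\cup[1-\lambda,1]$, the key inclusion $\sqrt K\subseteq[0,\sqrt c\,]\cup[\sqrt{1-\lambda},1]$. For the necessity I argue by contraposition: if $\sqrt c+1<2\sqrt{1-\lambda}$, then each of $\sqrt x,\sqrt y$ is either $\le\sqrt c$ or $\ge\sqrt{1-\lambda}$, so a sum $\sqrt x+\sqrt y$ with both summands $\ge\sqrt{1-\lambda}$ is $\ge2\sqrt{1-\lambda}$, while every other sum is $\le\sqrt c+1$. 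Thus the nonempty open interval $(\sqrt c+1,\,2\sqrt{1-\lambda})\subseteq[0,2]$ is omitted, and $\sqrt K+\sqrt K\neq[0,2]$.

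For the sufficiency I would first record a purely algebraic fact. From $c\ge\lambda$ one has $(\sqrt c\,)^2+(\sqrt{1-\lambda})^2=c+(1-\lambda)\ge1$, and a short computation shows that this, together with $\sqrt c+1\ge2\sqrt{1-\lambda}$, forces the companion inequality $\sqrt{1-\lambda}\le2\sqrt c$. So under the hypothesis both ``first--level'' inequalities $\sqrt{1-\lambda}\le2\sqrt c$ and $2\sqrt{1-\lambda}\le\sqrt c+1$ hold, and these are exactly the conditions that the three blocks $[0,2\sqrt c\,]$, $[\sqrt{1-\lambda},\sqrt c+1]$, $[2\sqrt{1-\lambda},2]$ cover $[0,2]$.

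Next I would reduce the infinite problem to a finite one. The inclusion $\sqrt K+\sqrt K\subseteq[0,2]$ is clear, so fix $t\in[0,2]$ and set $\Phi(x,y)=\sqrt x+\sqrt y$. Writing $K_n=\bigcup_{|\sigma|=n}f_\sigma(I)$ and letting $\mathcal R_n$ be the family of product cylinders $f_\sigma(I)\times f_\tau(I)$ with $|\sigma|=|\tau|=n$, each $\Phi(R)$ is an interval and the collection of rectangles $R$ with $t\in\Phi(R)$ is closed under passing to parents (since $\Phi(\mathrm{parent})\supseteq\Phi(\mathrm{child})$). Hence, once I show $t\in\bigcup_{R\in\mathcal R_n}\Phi(R)=\sqrt{K_n}+\sqrt{K_n}$ for every $n$, K\"onig's lemma yields a nested sequence $R_0\supseteq R_1\supseteq\cdots$ with $t\in\Phi(R_n)$; its intersection is a single point $(x^\ast,y^\ast)\in K\times K$, and since $\mathrm{diam}\,\Phi(R_n)\to0$ this point satisfies $\Phi(x^\ast,y^\ast)=t$. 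Thus the whole matter reduces to the finite statement $\sqrt{K_n}+\sqrt{K_n}=[0,2]$ for all $n$.

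This finite covering is the delicate step, and I would prove it by induction on $n$, the base case being $\sqrt{[0,1]}+\sqrt{[0,1]}=[0,2]$. For the inductive step I use $K_n=f_1(K_{n-1})\cup f_2(K_{n-1})\cup f_3(K_{n-1})$ to split $\sqrt{K_n}+\sqrt{K_n}$ into the nine pieces $\sqrt{f_i(K_{n-1})}+\sqrt{f_j(K_{n-1})}$. The diagonal piece $i=j=1$ is exactly $\sqrt\lambda\,(\sqrt{K_{n-1}}+\sqrt{K_{n-1}})=[0,2\sqrt\lambda\,]$ by the inductive hypothesis, because $\sqrt{f_1(x)}=\sqrt\lambda\,\sqrt x$ commutes with the scaling. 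The hard part will be gluing the remaining eight pieces onto this one: the maps $x\mapsto\sqrt{\lambda x+c-\lambda}$ and $x\mapsto\sqrt{\lambda x+1-\lambda}$ are not similarities, so those pieces are \emph{distorted} copies rather than rescalings and the inductive hypothesis does not apply to them directly. I expect to tame this distortion through the concavity of $\sqrt{\cdot}$: among all cylinder positions the covering is tightest for the cylinders abutting the origin, where $\sqrt{\lambda^{m}K}=\lambda^{m/2}\sqrt K$ restores exact self-similarity, and there the gluing requirement collapses to precisely $\sqrt c+1\ge2\sqrt{1-\lambda}$ together with its companion $\sqrt{1-\lambda}\le2\sqrt c$. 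Verifying that every other cylinder position is strictly easier — so that the worst case really is the corner at $0$ — while keeping track of the $f_1/f_2$ overlap, is the technical heart of the argument.
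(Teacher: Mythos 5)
Your necessity argument is correct and is the same as the paper's: $\sqrt K\subseteq[0,\sqrt c\,]\cup[\sqrt{1-\lambda},1]$ forces the omitted interval $(\sqrt c+1,\,2\sqrt{1-\lambda})$ when the inequality fails. Your reduction of the sufficiency to the finite-level statement via K\"onig's lemma is also sound, and is a legitimate substitute for the paper's stabilization lemma (Lemma \ref{lem}), which achieves the same reduction by showing directly that $F(K\cap[A,B],K\cap[M,N])=F(G_{k_0},G_{k_0}')$ once $F(I,J)=F(\tilde I,\tilde J)$ holds for all descendant cylinders.

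The genuine gap is that the covering step --- the entire content of the sufficiency direction --- is left as an expectation, and the heuristic you offer for completing it points in the wrong direction. You claim the worst case is the corner at the origin, where self-similarity of $\sqrt{\lambda^m K}$ makes the gluing requirement ``collapse'' to the two first-level inequalities $\sqrt c+1\ge2\sqrt{1-\lambda}$ and $2\sqrt c\ge\sqrt{1-\lambda}$. The paper's proof shows the opposite: cylinders near the origin are the \emph{easy} ones (they are handled at the very end by the rescaling $\bigcup_k(\sqrt\lambda)^k[\alpha,2]$ with $\alpha\le2\sqrt\lambda$, exactly as your diagonal $f_1\times f_1$ piece does), while the binding constraints occur at \emph{mixed} cylinder positions, one coordinate in the block near $0$ and the other near $1$. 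Concretely, the paper's sufficient condition for stabilization of $\sqrt x+\sqrt y$ on a product of cylinders $[a,a+t]\times[b,b+t]$ requires $a\ge(1-c-\lambda)^2$ and $8a(2\lambda+c-1)\ge t(3-4\lambda-4\lambda c-c^2-2c)$ (Lemma \ref{sqrt1}); these fail for cylinders abutting $0$, which is why one cannot simply iterate from $I=J=[0,1]$. Even after restricting to admissible cylinders, the first- and second-level images leave two residual gaps, $[\sqrt\lambda+\sqrt c,\ \sqrt{\lambda-\lambda^2}+\sqrt{1-\lambda}]$ and $[\sqrt\lambda+1,\ \sqrt{c-\lambda^2}+\sqrt{1-\lambda}]$, which the paper must cover using third-level cylinders such as $[\lambda c-\lambda^3,\lambda c]\times[1-\lambda+\lambda c-\lambda^2,\dots]$, and the whole parameter region $\{\sqrt c+1\ge2\sqrt{1-\lambda}\}$ has to be split into two subregions with different interval selections, each supported by additional polynomial inequalities in $(\lambda,c)$ verified separately (Lemmas \ref{blue1}, \ref{y1}, \ref{gap1}, \ref{gap2}). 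None of this is implied by the two first-level inequalities alone, so your inductive step does not close as described: the eight non-diagonal pieces are distorted, the inductive hypothesis does not transfer to them, and the concavity argument you gesture at would have to reproduce essentially all of the paper's case analysis to work.
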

Note that $\sqrt{c}+1\geq 2\sqrt{1-\lambda}$ implies $c\geq (1-\lambda)^2$, see Figure 1. Therefore, we have the following result. 
\begin{corollary}\label{Cor}
Let $K$  be the attractors  defined in the above theorem. Then the  following conditions are equivalent:
\begin{itemize}
\item [(1)] For any $u\in [0,1]$, there are some $x,y\in K$ such that $u=x\cdot y;$
\item [(2)] For any $u\in [0,1]$, there are some $x_1,x_2,x_3,x_4,x_5,x_6,x_7,x_8,x_9,x_{10}\in K$ such that $$u=x_1+x_2=x_3-x_4=x_5\cdot x_6=x_7\div x_8=\sqrt{x_9}+\sqrt{x_{10}};$$
\item [(3)] $c\geq (1-\lambda)^2$.
\end{itemize}
\end{corollary}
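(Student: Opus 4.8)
The plan is to prove the three conditions equivalent through the cycle $(3)\Rightarrow(2)\Rightarrow(1)\Rightarrow(3)$, where two of the three links are essentially free and all the content sits in $(3)\Rightarrow(2)$. The implication $(2)\Rightarrow(1)$ is immediate, since the representation in $(2)$ already contains the multiplicative clause $u=x_5\cdot x_6$, so $(2)$ forces $K\cdot K\supseteq[0,1]$, which is exactly $(1)$ (note $K\cdot K\subseteq[0,1]$ always). For $(1)\Rightarrow(3)$ I would quote the sharp result of Tian et al. \cite{XiKan1}, that $K\cdot K=[0,1]$ if and only if $c\ge(1-\lambda)^2$; its ``only if'' half is precisely $(1)\Rightarrow(3)$. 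Thus the real work is $(3)\Rightarrow(2)$: assuming $c\ge(1-\lambda)^2$, I must exhibit, for every $u\in[0,1]$, all five representations at once. The multiplicative one is \cite{XiKan1} again, and the division one is immediate from Theorem \ref{Main}, since $K/K=[0,\infty)\supseteq[0,1]$. It remains to handle $+$, $-$, and $\sqrt{\phantom{x}}+\sqrt{\phantom{x}}$.

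For addition and subtraction I would use that, by self-similarity, $K+K$ is the attractor of the IFS $\{x\mapsto\lambda x+(t_i+t_j)\}_{i,j}$ and $K-K$ that of $\{x\mapsto\lambda x+(t_i-t_j)\}_{i,j}$, where $t_1=0,\ t_2=c-\lambda,\ t_3=1-\lambda$. Since $\bigcup_{i,j}(\lambda[0,2]+(t_i+t_j))$ and $\bigcup_{i,j}(\lambda[-1,1]+(t_i-t_j))$ are contained in $[0,2]$ and $[-1,1]$ respectively, it suffices to verify that these translated copies actually cover $[0,2]$, resp. $[-1,1]$; then by uniqueness of the attractor $K+K=[0,2]$ and $K-K=[-1,1]$, both of which contain $[0,1]$. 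Sorting the translates and comparing consecutive ones against the copy-length $2\lambda$, every required overlap reduces to one of the inequalities $c\le 3\lambda$, $2c\ge 1-\lambda$, and $c\ge 1-2\lambda$. The first is the overlap hypothesis $f_1(I)\cap f_2(I)\ne\emptyset$; the other two follow from $c\ge(1-\lambda)^2$ together with $\lambda<1/2$ (which is itself forced by $\lambda\le c<1-\lambda$), since $2(1-\lambda)^2\ge 1-\lambda$ and $(1-\lambda)^2\ge 1-2\lambda$. This part is routine.

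The delicate point is the square-root representation, and here lies the main obstacle: the hypothesis $c\ge(1-\lambda)^2$ is genuinely weaker than the hypothesis $\sqrt c+1\ge 2\sqrt{1-\lambda}$ of Theorem \ref{Main1}, so I cannot simply invoke $\sqrt K+\sqrt K=[0,2]$. The plan is to show that the only gap whose closure requires the stronger hypothesis lies in $(1,2]$, so that $[0,1]$ is already covered under $c\ge(1-\lambda)^2$. Concretely, writing $A=\sqrt K\cap[0,\sqrt c]$ and $B=\sqrt K\cap[\sqrt{1-\lambda},1]$ (using $K\subseteq[0,c]\cup[1-\lambda,1]$), one has $\sqrt K+\sqrt K=(A+A)\cup(A+B)\cup(B+B)$, with $A+A\subseteq[0,2\sqrt c]$, $A+B\subseteq[\sqrt{1-\lambda},\sqrt c+1]$, and $B+B\subseteq[2\sqrt{1-\lambda},2]$. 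From the analysis underlying Theorem \ref{Main1}, $A+A$ fills $[0,2\sqrt c]$ and $A+B$ fills $[\sqrt{1-\lambda},\sqrt c+1]$; their union covers $[0,1]$ exactly when the lower gap closes, i.e. $2\sqrt c\ge\sqrt{1-\lambda}$, equivalently $4c\ge1-\lambda$, and since $\lambda<1/2$ yields $4(1-\lambda)^2\ge1-\lambda$, this is implied by $c\ge(1-\lambda)^2$. The remaining gap $(\sqrt c+1,\,2\sqrt{1-\lambda})$ between $A+B$ and $B+B$, whose closure is the precise content of Theorem \ref{Main1}, satisfies $\sqrt c+1\ge1$ and hence sits in $(1,2]$, so it is irrelevant for covering $[0,1]$.

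Assembling the five cases gives $(3)\Rightarrow(2)$ and closes the cycle. The hardest step is the square-root one, because it requires extracting from the proof of Theorem \ref{Main1} not merely its conclusion but the exact location of the gap, so as to see that the weaker bound $c\ge(1-\lambda)^2$ already suffices on $[0,1]$. The note recorded after Theorem \ref{Main1}, that $\sqrt c+1\ge2\sqrt{1-\lambda}$ implies $c\ge(1-\lambda)^2$ (Figure 1), is what guarantees the two hypotheses are genuinely ordered, and hence that no circularity enters the argument.
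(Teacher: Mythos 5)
Your cycle $(3)\Rightarrow(2)\Rightarrow(1)\Rightarrow(3)$ and your treatment of four of the five clauses coincide with the paper's proof: $(2)\Rightarrow(1)$ is read off from the multiplicative clause, $(1)\Leftrightarrow(3)$ is the sharp result of \cite{XiKan1}, division is Theorem \ref{Main}, and your attractor argument for $K+K=[0,2]$ and $K-K=[-1,1]$ is exactly the paper's Lemmas \ref{+} and \ref{-} (the remark after Lemma \ref{+} writes out the same six-map IFS). The problem is the square-root clause. Your entire detour there rests on the claim that $c\geq(1-\lambda)^2$ is \emph{weaker} than $\sqrt{c}+1\geq 2\sqrt{1-\lambda}$; you took this from the remark printed before the corollary, but that implication actually runs the other way. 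Indeed $c\geq(1-\lambda)^2$ is equivalent to $\sqrt{c}\geq 1-\lambda$, whence $\sqrt{c}+1\geq (1-\lambda)+1\geq 2\sqrt{1-\lambda}$ by AM--GM applied to $1-\lambda$ and $1$; the converse fails (for example $\lambda=0.36$, $c=0.38$ satisfies all the standing constraints and $\sqrt{c}+1\geq 2\sqrt{1-\lambda}$, yet $c<(1-\lambda)^2=0.4096$). So condition $(3)$ already places $(\lambda,c)$ inside the hypothesis of Theorem \ref{Main1}, and the square-root clause follows by simply quoting $\sqrt{K}+\sqrt{K}=[0,2]$ --- which is all the paper does.

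Beyond being unnecessary, your substitute argument has a real gap: the sentence ``from the analysis underlying Theorem \ref{Main1}, $A+A$ fills $[0,2\sqrt{c}]$ and $A+B$ fills $[\sqrt{1-\lambda},\sqrt{c}+1]$'' carries the whole weight of the clause and is never justified. The covering lemmas of Section 3 (Lemmas \ref{blue1}, \ref{y1}, \ref{gap1}, \ref{gap2}) are each proved only on explicitly delimited parameter regions contained in $\{\sqrt{c}+1\geq 2\sqrt{1-\lambda}\}$, and none of them asserts that $A+A$ or $A+B$ is a full interval; on your own (mistaken) premise that $c\geq(1-\lambda)^2$ describes a strictly larger region, those lemmas would not even be available where you need them. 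Your observations that any gap forced by the failure of $\sqrt{c}+1\geq 2\sqrt{1-\lambda}$ would lie above $1$, and that $2\sqrt{c}\geq\sqrt{1-\lambda}$ follows from $c\geq(1-\lambda)^2$ and $\lambda<1/2$, are correct but do not close this hole. Replacing the whole paragraph by the one-line AM--GM implication above makes the proof complete and brings it into agreement with the paper's.
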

This paper is arranged as follows. In section 2, we give the proof of Theorem \ref{Main}. In section 3, we prove Theorem \ref{Main1}.  In section 4, we pose one problem. 
\section{Proofs of Theorem \ref{Main} and Corollary \ref{Cor}}
Let $H=[0,1]$. For any $(i_{1},\cdots ,i_{n})\in \{1,2,3\}^{n}$, we call $%
f_{i_{1},\cdots ,i_{n}}(H)=(f_{i_{1}}\circ \cdots \circ f_{i_{n}})(H)$ a
basic interval of rank $n,$ which has length $\lambda^n$.  Denote by $H_{n}$ the collection of all these basic intervals
of rank $n$. Let $J\in H_{n}$, then $\widetilde{J}=\cup _{i=1}^{3}I_{n+1,i}$%
, where $I_{n+1,i}\in H_{n+1}$ and $I_{n+1,i}\subset J$ for $i=1,2,3$. Let $%
[A,B]\subset \lbrack 0,1]$, where $A$ and $B$ are the left and right
endpoints of some basic intervals in $H_{k}$ for some $k\geq 1$,
respectively. $A$ and $B$ may not be in the same basic interval. Let $F_{k}$
be the collection of all the basic intervals in $[A,B]$ with length $\lambda^k,k\geq k_{0}$ for some $k_{0}\in \mathbb{N}^{+}$, i.e. the union of
all the elements of $F_{k}$ is denoted by $G_{k}=\cup _{i=1}^{t_{k}}I_{k,i}$%
, where $t_{k}\in \mathbb{N}^{+}$, $I_{k,i}\in H_{k}$ and $I_{k,i}\subset
\lbrack A,B]$. Clearly, by the definition of $G_{n}$, it follows that $%
G_{n+1}\subset G_{n}$ for any $n\geq k_{0}.$ Similarly, suppose that $M$ and
$N$ are the left and right endpoints of some basic intervals in $H_{k}$.
Denote by $G_{k}^{\prime }$ the union of all the basic intervals with length
$\lambda^k$ in the interval $[M,N]$, i.e. $G_{k}^{\prime }=\cup
_{j=1}^{t_{k}^{\prime }}J_{k,j}$, where $t_{k}^{\prime }\in \mathbb{N}^{+}$,
$J_{k,j}\in H_{k}$ and $J_{k,j}\subset \lbrack M,N]$.

Very useful is the following lemma. It   comes from \cite{Tyson} and \cite{XiKan1}. 
For the convenience of readers, we give the detailed proof. 
\begin{lemma}\label{lem}
Suppose $U\subset \mathbb{R}^2$ is a non-empty open set. 
Let $F:U\rightarrow \mathbb{R}$  be a continuous function$.$
Suppose $A$ and $B$ ($M$ and $N$) are the left and right endpoints of some
basic intervals in $H_{k_{0}}$ for some $k_{0}\geq 1$ respectively such that
$[A,B]\times \lbrack M,N]\subset U.$ Then $K\cap \lbrack A,B]=\cap _{n={k_{0}%
}}^{\infty }G_{n}$, and $K\cap \lbrack M,N]=\cap _{n={k_{0}}}^{\infty
}G_{n}^{\prime }$. Moreover, if for any $n\geq k_{0}$ and any two basic
intervals $I\subset G_{n}$, $J\subset G_{n}^{\prime }$ such that
\begin{equation*}
F(I,J)=F(\widetilde{I},\widetilde{J}),
\end{equation*}%
then $F(K\cap \lbrack A,B],K\cap \lbrack M,N])=F(G_{k_{0}},G_{k_{0}}^{\prime
}).$
\end{lemma}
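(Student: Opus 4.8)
The plan is to prove the lemma in three stages: first the set identities $K\cap[A,B]=\bigcap_{n\ge k_0}G_n$ and $K\cap[M,N]=\bigcap_{n\ge k_0}G_n'$; then the level-invariance $F(G_n,G_n')=F(G_{k_0},G_{k_0}')$ for every $n\ge k_0$; and finally a passage to the limit by compactness and continuity. Throughout I write $F(I,J)=\{F(s,t):s\in I,\ t\in J\}$ for the image of a product.

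For the identities, recall that the attractor satisfies $K=\bigcap_{n\ge 0}E_n$, where $E_n=\bigcup_{J\in H_n}J$ is the union of all rank-$n$ basic intervals and the sequence $(E_n)$ is decreasing. Since each $G_n$ is a union of rank-$n$ basic intervals lying in $[A,B]$, we have $G_n\subseteq E_n\cap[A,B]$, so $\bigcap_n G_n\subseteq K\cap[A,B]$ is immediate. For the reverse inclusion I would take $x\in K\cap[A,B]$ and, for each $n\ge k_0$, exhibit a rank-$n$ basic interval that contains $x$ and is contained in $[A,B]$; the point is that $A$ and $B$ are themselves endpoints of rank-$k_0$ basic intervals, so one can select a coding of $x$ whose every rank-$n$ prefix determines a basic interval inside $[A,B]$, giving $x\in G_n$ for all $n$. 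The identity for $[M,N]$ is identical.

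For the level-invariance I would induct on $n$, the key feature being that the argument runs entirely through inclusions and so is insensitive to boundary overlaps. Since $\widetilde I$ and $\widetilde J$ decompose into their three rank-$(n+1)$ children, we get $F(\widetilde I,\widetilde J)=\bigcup_{i,j}F(I_{n+1,i},J_{n+1,j})$, and every such child lies in $G_{n+1}$ (resp. $G_{n+1}'$). Using the hypothesis $F(I,J)=F(\widetilde I,\widetilde J)$ and taking the union over all basic intervals $I\subset G_n$, $J\subset G_n'$ yields $F(G_n,G_n')=\bigcup_{I,J}F(I,J)\subseteq F(G_{n+1},G_{n+1}')$; the opposite inclusion is automatic from $G_{n+1}\subseteq G_n$ and $G_{n+1}'\subseteq G_n'$. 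Hence $F(G_n,G_n')=F(G_{n+1},G_{n+1}')$, and by induction $F(G_n,G_n')=F(G_{k_0},G_{k_0}')$ for every $n\ge k_0$.

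Finally, for the limit, the inclusion $F(K\cap[A,B],K\cap[M,N])\subseteq F(G_{k_0},G_{k_0}')$ follows from the identities of the first stage. For the reverse, given $z\in F(G_{k_0},G_{k_0}')=F(G_n,G_n')$ for every $n$, choose $x_n\in G_n$ and $y_n\in G_n'$ with $F(x_n,y_n)=z$; since $(x_n,y_n)$ lies in the compact set $G_{k_0}\times G_{k_0}'$, a subsequence converges to some $(x,y)$, and the nesting $G_{n+1}\subseteq G_n$ together with the closedness of each $G_m$ forces $x\in\bigcap_m G_m=K\cap[A,B]$ and $y\in K\cap[M,N]$; continuity of $F$ then gives $F(x,y)=z$. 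I expect the main obstacle to be the reverse inclusion in the first stage: because the maps $f_1,f_2$ overlap, a basic interval may straddle an endpoint, so one must argue carefully---using that $A,B,M,N$ are endpoints of basic intervals---that every point of $K\cap[A,B]$ is caught by a basic interval genuinely contained in $[A,B]$. The level-invariance and the limiting argument are then routine.
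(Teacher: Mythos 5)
Your proposal follows essentially the same route as the paper: the identity $K\cap[A,B]=\bigcap_{n\ge k_0}G_n$ from the nested construction, the invariance $F(G_n,G_n')=F(G_{n+1},G_{n+1}')$ obtained by decomposing each $G_n$ into its basic intervals and applying the hypothesis $F(I,J)=F(\widetilde I,\widetilde J)$, and a passage to the limit via continuity. The only difference is that you spell out the compactness/subsequence argument behind $F\bigl(\bigcap_n G_n,\bigcap_n G_n'\bigr)=\bigcap_n F(G_n,G_n')$, which the paper compresses into the single phrase ``the continuity of $F$ yields''; this is a welcome elaboration but not a different proof.
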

\begin{proof}
By the construction of $G_{n}$ ($G_{n}^{\prime }$), i.e. $G_{n+1}\subset
G_{n}$ ($G_{n+1}^{\prime }\subset G_{n}^{\prime }$) for any $n\geq k_{0}$,
it follows that
\begin{equation*}
K\cap \lbrack A,B]=\cap _{n=k_{0}}^{\infty }G_{n}\text{ and }K\cap \lbrack
M,N]=\cap _{n=k_{0}}^{\infty }G_{n}^{\prime }.
\end{equation*}%
The continuity of $F$ yields that
\begin{equation*}
F(K\cap \lbrack A,B],K\cap \lbrack M,N])=\cap _{n=k_{0}}^{\infty
}F(G_{n},G_{n}^{\prime }).
\end{equation*}
In terms of the relation $G_{n+1}=\widetilde{G_{n}}$, $G_{n+1}^{\prime }=
\widetilde{G_{n}^{\prime }}$ and the condition in the lemma, it follows that
\begin{eqnarray*}
F(G_{n},G_{n}^{\prime }) &=&\cup _{1\leq i\leq t_{n}}\cup _{1\leq j\leq
t_{n}^{\prime }}F(I_{n,i},J_{n,j}) \\
&=&\cup _{1\leq i\leq t_{n}}\cup _{1\leq j\leq t_{n}^{\prime }}F(\widetilde{
I_{n,i}},\widetilde{J_{n,j}}) \\
&=&F(\cup _{1\leq i\leq t_{n}}\widetilde{I_{n,i}},\cup _{1\leq j\leq
t_{n}^{\prime }}\widetilde{J_{n,j}}) \\
&=&F(G_{n+1},G_{n+1}^{\prime }).
\end{eqnarray*}
Therefore, $F(K\cap \lbrack A,B],K\cap \lbrack
M,N])=F(G_{k_{0}},G_{k_{0}}^{\prime }).$
\end{proof}

The following two  lemmas are trivial. We shall use them frequently in the remaining paper. 
\begin{lemma}\label{fund}
Let $K$  be the attractors of the following IFS
 $$\{f_1(x)=\lambda  x, f_2(x)=\lambda x +c-\lambda,f_3(x)=\lambda x +1-\lambda\}, $$
 where  $f_1(I)\cap f_2(I)\neq \emptyset, (f_1(I)\cup f_2(I))\cap f_3(I)=\emptyset,$
and  $I=[0,1]$ is the convex hull of $K$. Then $\lambda\leq c\leq 2\lambda, c+\lambda<1.$
\end{lemma}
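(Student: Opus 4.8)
The plan is to translate each of the three geometric hypotheses into a numerical inequality by computing the images $f_i(I)$ explicitly and then reading off the constraints. Since every $f_i$ is an increasing affine contraction, we have $f_1(I)=[0,\lambda]$, $f_2(I)=[c-\lambda,c]$ and $f_3(I)=[1-\lambda,1]$, and these three endpoint computations are the only work needed.

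First I would establish $\lambda\le c$. The point $0=f_1(0)$ is the fixed point of $f_1$ and hence lies in $K$; applying $f_2$ and using $f_2(K)\subset K\subset I=[0,1]$ gives $f_2(0)=c-\lambda\in[0,1]$, so $c-\lambda\ge 0$, i.e. $\lambda\le c$. Equivalently, since $[0,1]$ is the convex hull of $K$ and each $f_i$ is affine, $f_i(I)=\mathrm{conv}(f_i(K))\subset[0,1]$, and the inclusion $f_2(I)=[c-\lambda,c]\subset[0,1]$ already forces $c\ge\lambda$.

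Next, the overlap hypothesis $f_1(I)\cap f_2(I)\neq\emptyset$ reads $[0,\lambda]\cap[c-\lambda,c]\neq\emptyset$; since $0\le c-\lambda$ and $\lambda\le c$ by the previous step, this intersection equals $[c-\lambda,\lambda]$, which is nonempty precisely when $c-\lambda\le\lambda$, that is $c\le 2\lambda$. Combining with the previous step yields $\lambda\le c\le 2\lambda$, and in particular $f_1(I)\cup f_2(I)=[0,c]$. Finally, the separation hypothesis $(f_1(I)\cup f_2(I))\cap f_3(I)=\emptyset$ becomes $[0,c]\cap[1-\lambda,1]=\emptyset$, which, both being nondegenerate intervals with $[0,c]$ lying to the left, forces the strict inequality $c<1-\lambda$, i.e. $c+\lambda<1$. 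This gives all three claimed inequalities.

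I expect no serious obstacle: the content is entirely the bookkeeping of interval endpoints. The only point requiring a moment's care is the bound $c\ge\lambda$, which does \emph{not} follow from the overlap and separation conditions alone (those only give $c\le 2\lambda$ and $c<1-\lambda$), but rather from the requirement that $[0,1]$ be the convex hull of $K$, so that the image $f_2(I)$ cannot protrude below $0$.
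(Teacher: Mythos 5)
Your proof is correct: the paper states this lemma without proof (it is declared ``trivial''), and your endpoint bookkeeping $f_1(I)=[0,\lambda]$, $f_2(I)=[c-\lambda,c]$, $f_3(I)=[1-\lambda,1]$ combined with the overlap and separation hypotheses is exactly the intended verification. Your remark that $c\geq\lambda$ does not follow from those two hypotheses alone but from $f_2(I)=\mathrm{conv}(f_2(K))\subset\mathrm{conv}(K)=[0,1]$ is a genuine point the paper leaves implicit, and you handle it properly.
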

\begin{lemma}\label{frequentlyused}
The region satisfies the condition 
\begin{equation*}
\left\lbrace\begin{array}{cc}
                (1-\lambda)^2\leq c<1-\lambda\\
                \lambda\leq c\leq 2 \lambda\\
                \end{array}\right.
\end{equation*}
is the orange region in Figure 1.  Moreover, for any $(\lambda,c)$ in the  orange region, 
$$2c+\lambda-1\geq 0, 2\lambda+c-1\geq 0,$$ see the right picture in Figure 1. 
\end{lemma}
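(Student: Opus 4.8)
The plan is to treat the two assertions of the lemma separately: the identification of the feasible region with the orange region in Figure~1, and the two linear inequalities $2c+\lambda-1\geq 0$ and $2\lambda+c-1\geq 0$. The first assertion is essentially a matter of plotting. The defining system is cut out by the four curves $c=(1-\lambda)^2$, $c=1-\lambda$, $c=\lambda$ and $c=2\lambda$ in the $(\lambda,c)$-plane, and intersecting the corresponding regions they bound produces exactly the shaded orange region; so I would simply record the relevant boundary intersections and observe that their common part is the claimed set. The substantive content is the pair of inequalities, and for these the only hypotheses I need are $c\geq (1-\lambda)^2$ together with Lemma~\ref{fund}.

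For the inequality $2\lambda+c-1\geq 0$ I would use the lower bound $c\geq (1-\lambda)^2$ directly. Substituting and expanding gives
$$2\lambda+c-1\geq 2\lambda+(1-\lambda)^2-1=\lambda^2\geq 0,$$
which settles this case with no further input.

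For the inequality $2c+\lambda-1\geq 0$ I would again insert $c\geq (1-\lambda)^2$, obtaining
$$2c+\lambda-1\geq 2(1-\lambda)^2+\lambda-1=2\lambda^2-3\lambda+1=(2\lambda-1)(\lambda-1).$$
It then remains to check that this product is non-negative. Here I would invoke Lemma~\ref{fund}: from $\lambda\leq c$ and $c+\lambda<1$ we get $2\lambda\leq \lambda+c<1$, hence $\lambda<1/2$. Consequently both factors $2\lambda-1$ and $\lambda-1$ are negative, so their product is positive and the inequality follows.

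The argument is entirely elementary, so I do not anticipate a genuine obstacle; the only point requiring slight care is that the factorisation $(2\lambda-1)(\lambda-1)$ is manifestly non-negative only once one knows $\lambda<1/2$. This is why the bound $\lambda\leq c$ and the overlap condition $c+\lambda<1$ supplied by Lemma~\ref{fund} must be brought in for the first inequality, whereas the second inequality needs nothing beyond the hypothesis $c\geq(1-\lambda)^2$.
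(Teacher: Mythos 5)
Your proof is correct, and it is in fact more than the paper offers: the paper states Lemma~\ref{frequentlyused} with no proof at all, simply pointing the reader to the right-hand picture in Figure~1 (and elsewhere it leans on similar ``by the figure'' justifications). Your argument replaces that graphical appeal with a two-line algebraic verification: substituting the lower bound $c\geq(1-\lambda)^2$ gives $2\lambda+c-1\geq\lambda^2\geq 0$ immediately, and gives $2c+\lambda-1\geq(2\lambda-1)(\lambda-1)$, which is non-negative because Lemma~\ref{fund} forces $2\lambda\leq\lambda+c<1$ and hence $\lambda<1/2$, so both factors are negative. All the computations check out, and the observation that the first inequality genuinely needs the constraints $\lambda\leq c$ and $c+\lambda<1$ from Lemma~\ref{fund} while the second does not is a correct and worthwhile refinement. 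What your approach buys is a figure-independent, fully rigorous proof; what the paper's approach buys is brevity. The only cosmetic point is that the identification of the feasible set with the ``orange region'' is, as you say, purely a plotting statement, so dismissing it in a sentence is appropriate. One small caution: keep the labels straight when you write it up --- in your closing paragraph ``first'' and ``second'' refer to the order in the lemma statement ($2c+\lambda-1$ then $2\lambda+c-1$), which is the reverse of the order in which you prove them.
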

\begin{figure}
  \centering
    \includegraphics[width=180pt]{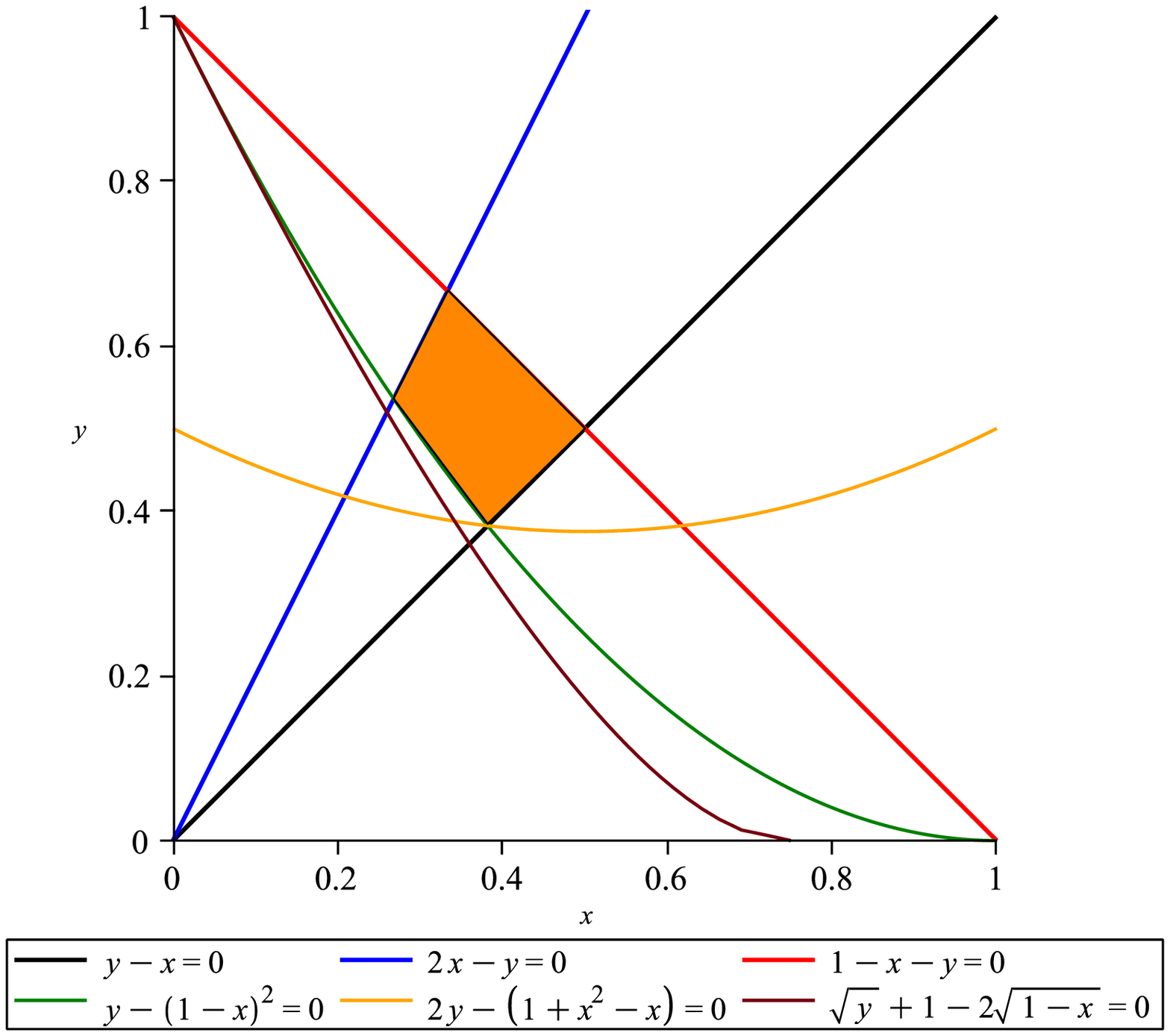}
  \includegraphics[width=175pt]{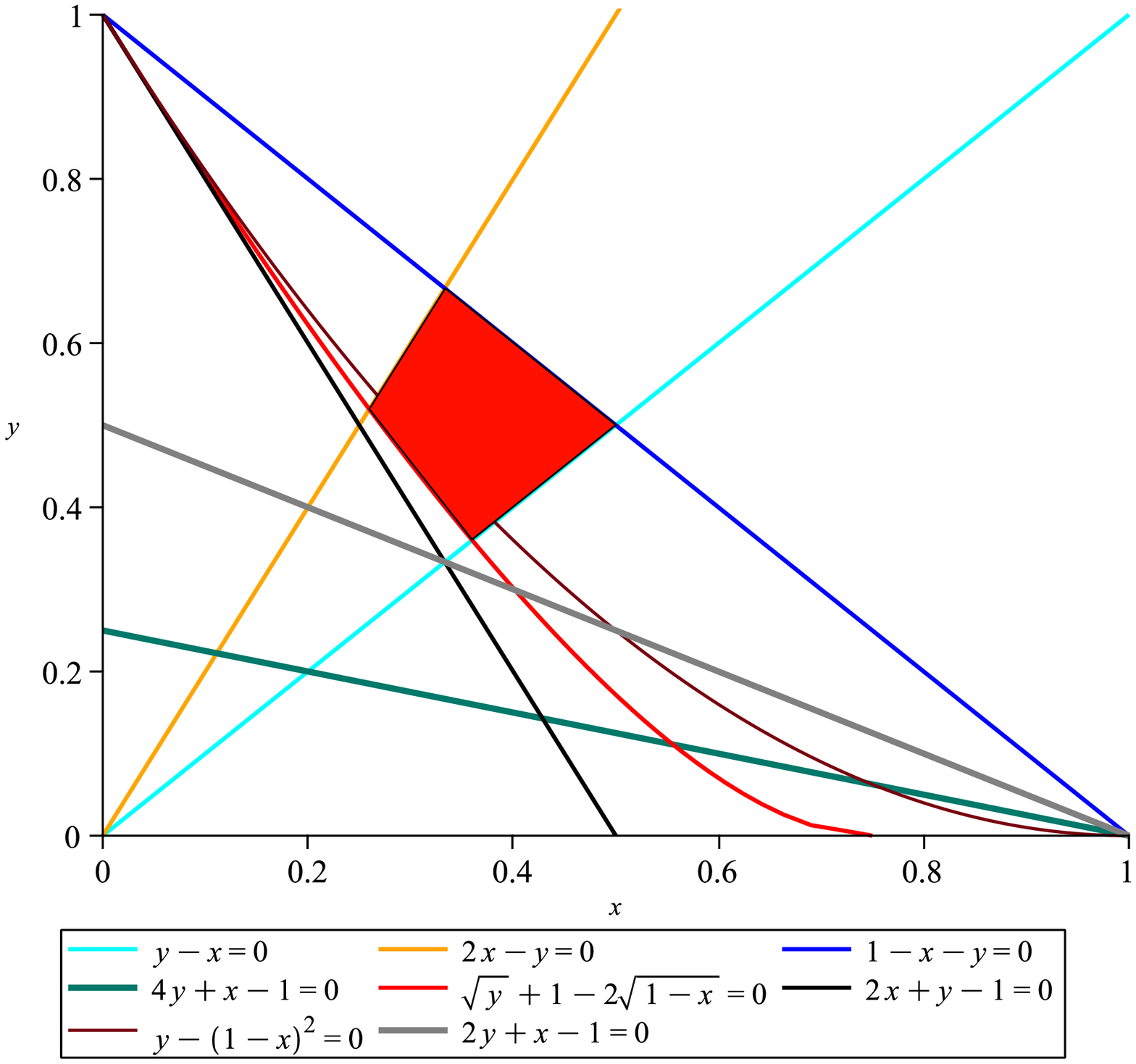}
   \caption{ }
\end{figure}
\begin{lemma}\label{lem1}
Let $I=[a,a+t], J=[b,b+t]$ be two basic intervals, where $b\geq a.$
Suppose  $a\geq 1-c-\lambda$, 
if $c\geq (1-\lambda)^2$,  then $f(I,J)=f(\tilde{I}, \tilde{J})$, where $f(x,y)=\dfrac{x}{y}$. 
\end{lemma}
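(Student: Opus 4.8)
The plan is to reduce everything to a covering statement about four product-intervals and then verify a short chain of elementary inequalities, each one powered by exactly one of the hypotheses.

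First I would write down the three children of $I=[a,a+t]$ explicitly. Since each generator contracts by $\lambda$ and $I$ has length $t$, the children are $[a,a+\lambda t]$, $[a+(c-\lambda)t,a+ct]$ and $[a+(1-\lambda)t,a+t]$. The overlap hypothesis $f_1(I)\cap f_2(I)\neq\emptyset$ (equivalently $c\le 2\lambda$ from Lemma \ref{fund}) forces the first two children to merge into $[a,a+ct]$, while $(f_1(I)\cup f_2(I))\cap f_3(I)=\emptyset$ (i.e. $c+\lambda<1$) keeps the third separate. Hence $\tilde I=A\cup B$ with $A=[a,a+ct]$, $B=[a+(1-\lambda)t,a+t]$, and likewise $\tilde J=C\cup D$ with $C=[b,b+ct]$, $D=[b+(1-\lambda)t,b+t]$. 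Because $a\ge 1-c-\lambda>0$ and $b\ge a$ every denominator is positive, so each block gives a genuine interval $f(P,Q)=[\min P/\max Q,\ \max P/\min Q]$, and $f(I,J)=[\,a/(b+t),\,(a+t)/b\,]$. Since $\tilde I\subset I$, $\tilde J\subset J$ give $f(\tilde I,\tilde J)\subset f(I,J)$ for free, it suffices to show that $f(A,C),f(A,D),f(B,C),f(B,D)$ cover $[\,a/(b+t),\,(a+t)/b\,]$.

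Next I would organize the covering as two ``numerator-fixed'' overlap checks followed by one chaining step. Letting the denominator run over $C\cup D$, I want $f(A,\tilde J)=f(A,C)\cup f(A,D)$ and $f(B,\tilde J)=f(B,C)\cup f(B,D)$ each to be a single interval, which needs $f(A,D)$ to meet $f(A,C)$ and $f(B,D)$ to meet $f(B,C)$. After clearing the positive denominators each becomes one polynomial inequality. The first, $\tfrac{a}{b+ct}\le\tfrac{a+ct}{b+(1-\lambda)t}$, reduces to $a(1-\lambda)\le c(a+b)+c^2t$; using $b\ge a$ together with the fact (forced by $\lambda\le c$ and $c+\lambda<1$) that $\lambda<\tfrac12$, hence $c\ge(1-\lambda)^2>\tfrac{1-\lambda}{2}$, this follows at once. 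The second, $\tfrac{a+(1-\lambda)t}{b+ct}\le\tfrac{a+t}{b+(1-\lambda)t}$, clears to $a(1-\lambda-c)-b\lambda+t\bigl((1-\lambda)^2-c\bigr)\le 0$; bounding $-b\lambda\le-a\lambda$ via $b\ge a$ turns this into $a(1-2\lambda-c)+t\bigl((1-\lambda)^2-c\bigr)\le 0$, where the first term is $\le 0$ by the orange-region inequality $2\lambda+c-1\ge 0$ of Lemma \ref{frequentlyused} and the second by $c\ge(1-\lambda)^2$. (One first checks $(\lambda,c)$ lies in that orange region: $c\ge(1-\lambda)^2$ is given, $c<1-\lambda$ and $\lambda\le c\le2\lambda$ come from Lemma \ref{fund}.)

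Finally I would chain the two resulting intervals $f(A,\tilde J)=[\,a/(b+t),\,(a+ct)/b\,]$ and $f(B,\tilde J)=[\,(a+(1-\lambda)t)/(b+t),\,(a+t)/b\,]$: their union is the whole target interval precisely when $\tfrac{a+(1-\lambda)t}{b+t}\le\tfrac{a+ct}{b}$, which reduces to $b(1-\lambda-c)\le a+ct$. This is where I would spend the two remaining hypotheses simultaneously: $b\le 1$ (as $J\subset[0,1]$) with $1-\lambda-c>0$ bounds the left side by $1-\lambda-c$, while $a\ge 1-c-\lambda$ bounds the right side from below by the same quantity. Assembling the three overlaps yields $f(\tilde I,\tilde J)=f(A,\tilde J)\cup f(B,\tilde J)=[\,a/(b+t),\,(a+t)/b\,]=f(I,J)$.

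I expect the main obstacle to be bookkeeping rather than depth: ordering the four blocks correctly and pairing each overlap inequality with the one hypothesis that closes it. The genuinely subtle point is that $c\ge(1-\lambda)^2$ is \emph{not} enough by itself for the two numerator-fixed overlaps — the $A$-block silently needs $\lambda<\tfrac12$ (automatic here) and the $B$-block needs the region inequality $2\lambda+c-1\ge 0$ — so I would take care to invoke Lemma \ref{fund} and Lemma \ref{frequentlyused} at exactly those two steps, and to use $a\ge1-c-\lambda$ and $b\le1$ only in the final chaining step.
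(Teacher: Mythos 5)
Your proposal is correct and follows essentially the same route as the paper: the same decomposition of $f(\tilde I,\tilde J)$ into the four product blocks $f(A,C),f(A,D),f(B,C),f(B,D)$, and the same three overlap inequalities (your two numerator-fixed checks and the chaining step are exactly the paper's Cases 1, 3 and 2, closed by the same facts $2c+\lambda-1\ge 0$, $c\ge(1-\lambda)^2$ with $2\lambda+c-1\ge0$, and $a\ge 1-c-\lambda$ with $b\le 1$). The only cosmetic difference is that your grouping by numerator block spares you the paper's auxiliary verification that $e_3\ge e_2$.
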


\begin{proof}
Note that $\tilde{I}=[a,a+ct]\cup [a+(1-\lambda)t,a+t],\tilde{J}=[b,b+ct]\cup [b+(1-\lambda)t,b+t].$
Clearly, $$f(I,J)=\left[\dfrac{a}{b+t}, \dfrac{a+t}{b}\right],$$
$$f(\widetilde{I}, \widetilde{J})=J_1\cup J_2\cup J_3\cup J_4,$$
where 
\begin{eqnarray*}
J_1&=&\left[\dfrac{a}{b+t}, \dfrac{a+ct}{b+(1-\lambda)t}\right]=[e_1,h_1],   \\
J_2&=&\left[\dfrac{a}{b+ct}, \dfrac{a+ct}{b}\right]=[e_2, h_2],\\
J_3&=&\left[\dfrac{a+(1-\lambda)t}{b+t}, \dfrac{a+t}{b+(1-\lambda)t}\right]=[e_3,h_3],\\
J_4&=& \left[ \dfrac{a+(1-\lambda)t}{b+ct}, \dfrac{a+t}{b}\right]=[e_4,h_4].
\end{eqnarray*}
Since $b\geq a$, it follows that 
\begin{eqnarray*}
e_3-e_2 &=& \dfrac{t}{(b+ct)(b+t)}(-a(1-c)+(b+ct)(1-\lambda))\\ 
              &\geq& \dfrac{t}{(b+ct)(b+t)}(-a(1-c)+(b+ct)(1-c))\\
                      &=&\dfrac{t}{(b+ct)(b+t)}(b+ct-a)(1-c)\geq 0.
\end{eqnarray*}
Note that $f(I,J)=f(\widetilde{I}, \widetilde{J})$ if and only if 
\begin{equation*}
\left\lbrace\begin{array}{cc}
               h_1-e_2\geq 0\\
                h_2-e_3\geq0\\
                h_3-e_4 \geq0.\\
                \end{array}\right.
\end{equation*}
Now we prove these inequalities. 

\noindent\textbf{Case 1.}  
 \begin{eqnarray*}
h_1-e_2 &=&\dfrac{t}{(b+ct)(b+t-\lambda t)}(a\lambda-a+c^2t+ac+bc)\\ 
           &\geq& \dfrac{t}{(b+ct)(b+t-\lambda t)}(a\lambda-a+c^2t+ac+ac)\\
           &=& \dfrac{t}{(b+ct)(b+t-\lambda t)}(a(\lambda+2c-1)+c^2t).
\end{eqnarray*}
Therefore, we need to assume $\lambda+2c-1\geq 0$.

\noindent\textbf{Case 2.}  $h_2-e_3=\dfrac{t}{b(b+t)}(a+b(c+\lambda-1)+ct)\geq 0$
as  $b\geq a\geq 1-c-\lambda$. 

\noindent\textbf{Case 3.} 
 \begin{eqnarray*}
h_3-e_4 &=&\dfrac{a+t}{b+(1-\lambda)t}-\dfrac{a+(1-\lambda)t}{b+ct}\\ 
           &=& \dfrac{t}{(b+ct)(b+t-t\lambda)}(a\lambda-t-a+b\lambda+2t\lambda-t\lambda^2+ac+ct)\\
           &\geq& \dfrac{t}{(b+ct)(b+t-t\lambda)}(t(c-(1-\lambda)^2)+a(2\lambda+c-1)).
\end{eqnarray*}
If  $c-(1-\lambda)^2\geq 0$ and $2\lambda+c-1\geq0$, then $h_3-e_4\geq 0$.
By Lemmas \ref{fund}, \ref{frequentlyused} and the condition $c-(1-\lambda)^2\geq 0$, it follows that if $c-(1-\lambda)^2\geq 0$, then 
$2\lambda+c-1\geq0,\lambda+2c-1\geq 0$ . 
\end{proof}
\begin{lemma}\label{big}
If $\dfrac{3-\sqrt{5}}{2}\leq \lambda< 1 $, then $\dfrac{K}{K}=[0,\infty)$.
\end{lemma}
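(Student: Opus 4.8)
The plan is to deduce everything from a single ``seed'' interval together with two elementary closure properties of the set $S:=K/K$. First, $S\setminus\{0\}$ is invariant under $u\mapsto 1/u$, since $x/y=u$ forces $y/x=1/u$ with $x,y\in K\setminus\{0\}$, and $0\in S$ because $0/1=0$. Second, $S$ is invariant under multiplication by $\lambda$: if $u=x/y\in S$ then $\lambda u=(\lambda x)/y=f_1(x)/y\in K/K=S$, because $f_1(x)\in f_1(K)\subseteq K$. Finally, I translate the hypothesis: within the admissible range Lemma \ref{fund} gives $c\ge\lambda$, while $\lambda\ge\frac{3-\sqrt5}{2}$ is precisely $\lambda^2-3\lambda+1\le0$, i.e. $\lambda\ge(1-\lambda)^2$. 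Hence $c\ge\lambda\ge(1-\lambda)^2$, so that Lemma \ref{lem1} is available, and moreover $\tfrac{\lambda}{1-\lambda}\ge 1-\lambda$, a fact I will use to glue scaled copies of the seed.

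The seed is the statement $\bigl[\,1-\lambda,\ \tfrac1{1-\lambda}\,\bigr]\subseteq S$. To obtain it I would apply Lemma \ref{lem} to the square $[A,B]\times[M,N]=[1-\lambda,1]^2=f_3(I)\times f_3(I)$, with $k_0=1$ and $f(x,y)=x/y$, which is continuous on a neighbourhood of this square since there $y\ge 1-\lambda>0$. Every basic interval appearing in $G_n$ or $G_n'$ lies in $[1-\lambda,1]$, hence has left endpoint $\ge 1-\lambda\ge 1-c-\lambda$; therefore, for any pair $I\subseteq G_n$, $J\subseteq G_n'$ with the numerator interval to the left of the denominator interval, Lemma \ref{lem1} yields $f(I,J)=f(\tilde I,\tilde J)$. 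For a pair in the opposite order one applies Lemma \ref{lem1} to the reversed pair and takes reciprocals, which preserves the identity. Thus the hypothesis of Lemma \ref{lem} holds for all pairs, and
\[
f\bigl(K\cap[1-\lambda,1],\,K\cap[1-\lambda,1]\bigr)=f\bigl([1-\lambda,1],[1-\lambda,1]\bigr)=\Bigl[\,1-\lambda,\ \tfrac1{1-\lambda}\,\Bigr],
\]
which is the desired seed.

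It remains to propagate the seed. Writing $p=1-\lambda$ and $q=\tfrac1{1-\lambda}$ (so $pq=1$ and $[p,q]\subseteq S$), the scaling property gives $\lambda[p,q]=[\lambda p,\lambda q]\subseteq S$, and because $\lambda q=\tfrac{\lambda}{1-\lambda}\ge p$ the two intervals overlap, so $[\lambda p,q]\subseteq S$. Iterating, $[\lambda^np,q]\subseteq S$ for every $n$, whence $(0,q]\subseteq S$, and adjoining $0$ gives $[0,q]\subseteq S$. Applying the reciprocal symmetry to $(0,q]\subseteq S$ produces $[1/q,\infty)=[p,\infty)\subseteq S$, and since $p\le q$ the union is all of $[0,\infty)$; the reverse inclusion is trivial. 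I expect the main obstacle to be the verification of the hypothesis of Lemma \ref{lem} on $[1-\lambda,1]^2$ for pairs of basic intervals in reversed order, i.e. those producing quotients exceeding $1$: handling these cleanly via the reciprocal reduction to Lemma \ref{lem1}, and making sure the boundary case of equal intervals is covered, is the delicate point, together with checking that the threshold $\lambda\ge\frac{3-\sqrt5}{2}$ is exactly what makes consecutive scaled seeds overlap.
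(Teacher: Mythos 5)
Your proof is correct and takes essentially the same route as the paper: obtain the seed $\left[1-\lambda,\tfrac{1}{1-\lambda}\right]\subseteq K/K$ by applying Lemmas \ref{lem1} and \ref{lem} to $I=J=[1-\lambda,1]$, then glue the $\lambda$-scaled copies using the fact that $\lambda\geq\tfrac{3-\sqrt{5}}{2}$ is exactly $\tfrac{\lambda}{1-\lambda}\geq 1-\lambda$. The only (cosmetic) differences are that you make explicit two points the paper leaves tacit --- that $c\geq\lambda\geq(1-\lambda)^2$ licenses Lemma \ref{lem1}, and that reversed-order pairs in Lemma \ref{lem} are handled by taking reciprocals --- and that you reach $[1-\lambda,\infty)$ via the symmetry $u\mapsto 1/u$ where the paper uses negative powers of $\lambda$.
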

\begin{proof}
Note that $1-\lambda\geq 1-c-\lambda$,  by Lemma \ref{lem1} we may take  $I=J=[1-\lambda,1]$. 
Therefore, by Lemma \ref{lem} $$\dfrac{K}{K}\supset f([1-\lambda,1], [1-\lambda,1])=\left[1-\lambda, \dfrac{1}{1-\lambda}\right].$$
Since $\dfrac{3-\sqrt{5}}{2}\leq \lambda\leq 1/2 $, it follows that $\dfrac{\lambda}{1-\lambda}\geq 1-\lambda.$ Therefore, 
$$ [0,\infty)=\{0\}\cup  \cup_{k=-\infty}^{\infty}\lambda^k\left[1-\lambda, \frac{1}{1-\lambda}\right]\subset \dfrac{K}{K}\subset [0,\infty).$$
\end{proof}
\noindent Note that in the orange  region (see Figure 1), we have 
$$ c-\lambda^2\geq 1-c-\lambda.$$
In fact, $y=\dfrac{1}{2}(x^2-x+1)$ takes the minimun value at $\dfrac{3-\sqrt{5}}{2}$ on the interval $[0,1]$, i.e. 
$c_{min}\geq \dfrac{3-\sqrt{5}}{2}$ if $(\lambda, c)$ is in the orange region of Figure 1. 
Therefore, we can make use of Lemma \ref{lem1}.
Let $$I=J=J_1\cup J_2\cup J_3.$$
where
 \begin{eqnarray*}
J_1&=&\left[c-\lambda^2, c\right]\\
J_2&=& \left[ 1-\lambda, 1-\lambda+\lambda c\right]\\
J_3&=&\left[1-\lambda^2,1 \right].
\end{eqnarray*}
Then it  is easy to check that $f(I,J)=\cup_{i=1}^{9} L_i$, where
$$ L_{1}= \left[c-\lambda^2, \dfrac{c}{1-\lambda^2}\right], L_{2}= \left[\dfrac{c-\lambda^2}{ 1-\lambda+c\lambda}, \dfrac{ c}{1-\lambda}\right],L_{3}=\left[ 1-\lambda ,\frac{1-\lambda +\lambda c}{1-\lambda ^{2}}\right]$$
$$L_4=[*,*],  L_5=[*,*], L_6= 
\left[1-\lambda^2, \dfrac{1}{1-\lambda^2}\right]$$
$$L_7=\left[ \dfrac{1-\lambda ^{2}}{1-\lambda+c \lambda},\frac{1}{1-\lambda }\right], L_8= \left[\frac{1-\lambda }{c},\frac{1-\lambda +\lambda c}{c-\lambda ^{2}}\right] , L_9= \left[\frac{1-\lambda ^{2}}{c},
\frac{1}{c-\lambda ^{2}}\right].$$
We arrange $L_i=[i_l, i_r], 1\leq i\leq 9$ from left to right, where $``l",``r" $ denote the words left and right, respectively. 
Here $$L_4= \left[\dfrac{c-\lambda^2}{ c}, \dfrac{ c}{c-\lambda^2}\right],  L_5= \left[\frac{
1-\lambda }{1-\lambda +\lambda c},\frac{1-\lambda +\lambda c}{1-\lambda }\right],$$ provided that 
$\frac{
1-\lambda }{1-\lambda +\lambda c}\geq \dfrac{c-\lambda^2}{ c}$. 
If  $\frac{
1-\lambda }{1-\lambda +\lambda c}<\dfrac{c-\lambda^2}{ c}$, then 
$$L_4= \left[\frac{
1-\lambda }{1-\lambda +\lambda c},\frac{1-\lambda +\lambda c}{1-\lambda }\right],  L_5= \left[\dfrac{c-\lambda^2}{ c}, \dfrac{ c}{c-\lambda^2}\right].$$
The reason why  $i_l<(i+1)_{l}, i=1,2,3, 5,6,7,8$ is due  to following lemma. 
\begin{lemma}
Let $L_i=[i_l, i_r], 1\leq i\leq 9$  be the intervals defined as above. 
Then 
\begin{itemize}
\item[(1)]$2_l<3_l$ if and only if $c\leq \dfrac{\lambda^2+(1-\lambda)^2}{1-(1-\lambda)\lambda}$.
 \item[(2)]$\max\{4_l, 5_l\}<6_l$ if and only if $\max\left\{\dfrac{c-\lambda^2}{c}, \dfrac{1-\lambda}{1-\lambda+\lambda c}\right\}<1-\lambda^2$.
 \item[(3)]$7_l<8_l$ if and only if $\lambda+ c\leq 1$.
  \item[(4)] $\dfrac{1}{1-\lambda+c \lambda}>1, c\geq \lambda$.
  \end{itemize}
\end{lemma}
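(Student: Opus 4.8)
The plan is to prove the four claims one at a time by clearing denominators and reducing each to an elementary inequality in $\lambda$ and $c$. The single preliminary observation that legitimizes every cross-multiplication is that all denominators occurring in the endpoints $i_l$ are strictly positive. Indeed, Lemma \ref{fund} gives $\lambda\le c\le 2\lambda$ and $c+\lambda<1$, so $0<\lambda<1$ and $0<c<1$; hence $c>0$, $1-\lambda>0$, $1-\lambda^2>0$, and $1-\lambda+c\lambda=1-\lambda(1-c)>0$, while $1-\lambda+\lambda^2>0$ since its discriminant as a quadratic in $\lambda$ is negative. I also record $c-\lambda^2\ge \lambda-\lambda^2=\lambda(1-\lambda)>0$, which is what makes claim (4) useful for the ordering $1_l<2_l$. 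I would state all of these signs at the outset so that no inequality reverses when I multiply out.

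For (1), I start from $2_l=\frac{c-\lambda^2}{1-\lambda+c\lambda}$ and $3_l=1-\lambda$ and multiply through by the positive quantity $1-\lambda+c\lambda$, turning $2_l<3_l$ into $c-\lambda^2<(1-\lambda)(1-\lambda+c\lambda)$. Expanding the right-hand side and collecting the terms in $c$ gives $c(1-\lambda+\lambda^2)<(1-\lambda)^2+\lambda^2$, and dividing by the positive factor $1-\lambda+\lambda^2=1-(1-\lambda)\lambda$ produces exactly $c<\frac{\lambda^2+(1-\lambda)^2}{1-(1-\lambda)\lambda}$. Claim (3) follows by the same recipe: clearing the positive denominators in $7_l=\frac{1-\lambda^2}{1-\lambda+c\lambda}<\frac{1-\lambda}{c}=8_l$ yields $c(1-\lambda^2)<(1-\lambda)(1-\lambda+c\lambda)$, and after cancelling the common positive factor $1-\lambda$ this reads $c(1+\lambda)<1-\lambda+c\lambda$, i.e. $c<1-\lambda$, which is precisely $\lambda+c\le 1$.

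Claim (4) is immediate: $\frac{1}{1-\lambda+c\lambda}>1$ is equivalent to $1-\lambda+c\lambda<1$, i.e. $c\lambda<\lambda$, i.e. $c<1$, which holds since $c+\lambda<1$; and $c\ge\lambda$ is part of Lemma \ref{fund}. Claim (2) requires no computation whatsoever. By the definition of $L_4$ and $L_5$ their left endpoints form the unordered pair $\{4_l,5_l\}=\{\frac{c-\lambda^2}{c},\frac{1-\lambda}{1-\lambda+\lambda c}\}$ regardless of which interval is labelled $L_4$ and which $L_5$, so $\max\{4_l,5_l\}=\max\{\frac{c-\lambda^2}{c},\frac{1-\lambda}{1-\lambda+\lambda c}\}$; since $6_l=1-\lambda^2$, the asserted equivalence is a tautology.

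I do not expect a genuine obstacle. Each nontrivial part collapses to a linear inequality in $c$ the moment the denominators are cleared, and the only thing that could spoil the argument, namely a sign reversal during cross-multiplication, is excluded by the positivity facts drawn from Lemma \ref{fund}. The only point to flag is the mild mismatch between the strict inequalities on the interval side (e.g. $2_l<3_l$) and the non-strict ones on the parameter side (e.g. $c\le\cdots$): these differ only in the boundary equality case, which I would note explicitly rather than dwell on.
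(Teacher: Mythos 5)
Your proof is correct and follows essentially the same route as the paper: clear the (positive) denominators and reduce each claim to a linear inequality in $c$. The only difference is one of emphasis — the paper's own proof treats the equivalences as routine and instead spends its effort showing that the condition in item (1) is automatically satisfied (rewriting it as $\lambda^2(c-1)\leq(1-\lambda)(1-c-\lambda)$, with negative left side and positive right side by Lemma \ref{fund}), which is the fact actually needed for the ordering $i_l<(i+1)_l$; your write-up proves the stated equivalences but could add this one-line observation to serve the lemma's purpose in the surrounding argument.
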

\begin{proof}
It suffices to show that $c\leq \dfrac{\lambda^2+(1-\lambda)^2}{1-(1-\lambda)\lambda}$.
Note that the above inequality is  equivalent to 
$$\lambda^2(c-1)\leq (1-\lambda)(1-c-\lambda).$$
The left side  is negative while the right   is positive.  
\end{proof}
\begin{lemma}\label{small}
Let $K$  be the attractors of the following IFS
 $$\{f_1(x)=\lambda  x, f_2(x)=\lambda x +c-\lambda,f_3(x)=\lambda x +1-\lambda\}, $$
 where  $f_1(I)\cap f_2(I)\neq \emptyset, (f_1(I)\cup f_2(I))\cap f_3(I)=\emptyset,$
and  $I=[0,1]$ is the convex hull of $K$.  If $c\geq (1-\lambda)^2$ and $0<\lambda\leq \dfrac{3-\sqrt{5}}{2}$, then 
$$\dfrac{K}{K}\supset \left[c-\lambda ^{2},\frac{1}{c-\lambda ^{2}}\right].$$
\end{lemma}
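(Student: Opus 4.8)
The plan is to realize the target interval as a quotient of a single two-scale piece of $K$ with itself, verify the refinement identity via Lemma \ref{lem1}, and then reduce the covering statement to a short chain of overlaps, the binding one of which is exactly the hypothesis $c\ge(1-\lambda)^2$. First I would record the consequences of the assumptions: together with Lemma \ref{fund}, the conditions $c\ge (1-\lambda)^2$ and $0<\lambda\le \frac{3-\sqrt5}{2}$ place $(\lambda,c)$ in the orange region of Figure 1, so $2c+\lambda-1\ge 0$ and $2\lambda+c-1\ge 0$ hold, and moreover $c-\lambda^2\ge 1-c-\lambda$. This last inequality is exactly where $\lambda\le\frac{3-\sqrt5}{2}$ enters: it is equivalent to $c\ge\frac12(\lambda^2-\lambda+1)$, which follows from $c\ge(1-\lambda)^2$ once $(1-\lambda)^2\ge\frac12(\lambda^2-\lambda+1)$, i.e. $\lambda^2-3\lambda+1\ge 0$. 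I then take $I=J=J_1\cup J_2\cup J_3$, the union of all rank-$2$ basic intervals contained in $[c-\lambda^2,1]$, with $A=M=c-\lambda^2$ and $B=N=1$.

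Next I would check the hypothesis of Lemma \ref{lem} for $F(x,y)=x/y$ on this configuration. Since $G_{n+1}\subset G_n$ and the leftmost point of every $G_n$ is $c-\lambda^2$, every basic interval occurring in $G_n$ or $G_n'$ has left endpoint at least $c-\lambda^2\ge 1-c-\lambda$. Thus for a pair $I'=[a,a+t]\subset G_n$, $J'=[b,b+t]\subset G_n'$ with $b\ge a$, Lemma \ref{lem1} gives $F(I',J')=F(\widetilde{I'},\widetilde{J'})$ directly. For a pair with $a>b$ I would observe that, as subsets of $(0,\infty)$, $F(I',J')$ is the image of $F(J',I')$ under the homeomorphism $w\mapsto 1/w$, and likewise after refinement; applying Lemma \ref{lem1} to $(J',I')$ (whose smaller left endpoint $b$ still satisfies $b\ge 1-c-\lambda$) and taking reciprocals yields the same identity. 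Hence Lemma \ref{lem} applies, $\frac{K\cap[c-\lambda^2,1]}{K\cap[c-\lambda^2,1]}=F(G_2,G_2)=\bigcup_{i=1}^{9}L_i$, and since $\frac{K}{K}\supset\frac{K\cap[c-\lambda^2,1]}{K\cap[c-\lambda^2,1]}$ it remains to prove $\bigcup_{i=1}^9 L_i\supset\left[c-\lambda^2,\frac{1}{c-\lambda^2}\right]$.

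For the covering I would exploit the symmetry forced by $I=J$: the nine quotients split into reciprocal pairs $L_1\leftrightarrow L_9$, $L_2\leftrightarrow L_8$, $L_3\leftrightarrow L_7$ together with the three self-reciprocal diagonal quotients $f(J_p,J_p)$ among $L_4,L_5,L_6$, so the set $\bigcup_{i=1}^9 L_i=I/J$ is invariant under $w\mapsto 1/w$. It therefore suffices to cover the left half $[c-\lambda^2,1]$ and then read off the right half by reciprocation. Since $1_l=c-\lambda^2$, and by the preceding (left-to-right arranging) lemma $1_l<2_l<3_l<4_l$ with $L_4$ a symmetric interval of the form $[s,1/s]\ni 1$, the left half is covered by $L_1\cup L_2\cup L_3\cup L_4$ as soon as the three overlaps $2_l\le 1_r$, $3_l\le 2_r$, $4_l\le 3_r$ hold. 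The middle one is the crucial and sharp step: $3_l\le 2_r$ reads $1-\lambda\le\frac{c}{1-\lambda}$, i.e. precisely $c\ge(1-\lambda)^2$, the hypothesis, which also explains the sharpness of the bound at $c=(1-\lambda)^2$. Combining with reciprocal invariance gives $\bigcup_{i=1}^9 L_i\supset[c-\lambda^2,1]\cup[1,\frac{1}{c-\lambda^2}]=\left[c-\lambda^2,\frac{1}{c-\lambda^2}\right]$, as required.

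The main obstacle I anticipate is bookkeeping rather than conceptual: verifying the overlap $4_l\le 3_r$ in both sub-cases of the $L_4/L_5$ ordering (depending on whether $\frac{1-\lambda}{1-\lambda+\lambda c}\ge\frac{c-\lambda^2}{c}$) and the remaining overlap $2_l\le 1_r$, in each case clearing denominators and tracking which orange-region inequalities ($2c+\lambda-1\ge 0$, $2\lambda+c-1\ge 0$, $c+\lambda<1$, $c\ge\lambda$) are consumed. The conceptual content — that the two-scale refinement of the quotient is stable by Lemma \ref{lem1}, and that $c\ge(1-\lambda)^2$ is exactly the single binding overlap — is already isolated by the reduction above, so these remaining checks are elementary.
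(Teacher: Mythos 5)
Your proposal is correct and follows essentially the same route as the paper: the same choice $I=J=[c-\lambda^2,c]\cup[1-\lambda,1-\lambda+\lambda c]\cup[1-\lambda^2,1]$, the same reduction via Lemmas \ref{lem} and \ref{lem1} (justified by $c-\lambda^2\ge 1-c-\lambda$, which is where $\lambda\le\frac{3-\sqrt5}{2}$ enters), and the same chain of overlaps among the nine quotient intervals $L_1,\dots,L_9$, whose consecutive intersections the paper verifies case by case in Lemmas \ref{case1} and \ref{case2}. Your reciprocal-symmetry observation (together with the explicit reciprocal trick for applying Lemma \ref{lem1} when $a>b$, a point the paper leaves implicit) is a clean shortcut that roughly halves the overlap checks, but the underlying argument is identical and the remaining inequalities you defer are exactly those computed in the paper.
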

\begin{lemma}\label{case1}
Suppose $(\lambda,c)$ satisfies the conditions in Lemma \ref{small}.
If $$\frac{
1-\lambda }{1-\lambda +\lambda c}\geq \dfrac{c-\lambda^2}{ c},$$ then  $L_i\cap L_{i+1}\neq \emptyset, 1\leq i\leq 8$.
\end{lemma}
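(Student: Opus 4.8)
The plan is to exploit the fact that the nine intervals $L_1,\ldots,L_9$ are already listed from left to right. By the ordering lemma proved just above, together with the standing case hypothesis $\frac{1-\lambda}{1-\lambda+\lambda c}\geq\frac{c-\lambda^2}{c}$ (which is precisely $4_l\leq 5_l$), one has $1_l\leq 2_l\leq\cdots\leq 9_l$. For intervals arranged this way, $L_i\cap L_{i+1}\neq\emptyset$ holds \emph{if and only if} $(i+1)_l\leq i_r$. Thus the whole lemma reduces to the eight scalar inequalities $(i+1)_l\leq i_r$ for $1\leq i\leq 8$. First I would record that on the region of Lemma \ref{small} every denominator appearing in the $i_l,i_r$ (namely $c$, $c-\lambda^2$, $1-\lambda$, $1-\lambda^2$, $1-\lambda+\lambda c$) is strictly positive; in particular $c-\lambda^2>0$ since $c\geq\lambda>\lambda^2$ by Lemma \ref{fund}. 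Clearing these positive denominators turns each of the eight comparisons into a polynomial inequality in $(\lambda,c)$.

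Next I would dispatch the routine cases. The two inequalities $3_l\leq 2_r$ and $8_l\leq 7_r$ both simplify to $(1-\lambda)^2\leq c$, which is the standing hypothesis. The inequality $6_l\leq 5_r$ simplifies to $\lambda^2(\lambda-1)\leq\lambda c$, whose left side is negative and right side positive; and $5_l\leq 4_r$ reduces to $\lambda\big(c^2+\lambda-\lambda^2\big)\geq 0$, which is manifest since $\lambda-\lambda^2>0$. Finally $7_l\leq 6_r$ becomes $(1-\lambda^2)^2\leq 1-\lambda+\lambda c$, i.e. $c\geq 1-2\lambda+\lambda^3$, and this follows from $c\geq(1-\lambda)^2=1-2\lambda+\lambda^2$ together with $\lambda^2\geq\lambda^3$. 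None of these requires the upper bound on $\lambda$.

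The hard part will be the remaining three inequalities $2_l\leq 1_r$, $4_l\leq 3_r$ and $9_l\leq 8_r$. A short cross-multiplication shows that all three collapse to the single quadratic inequality
$$p(c):=c^2-(1-\lambda)c+\lambda(1-\lambda)(1+\lambda)\geq 0,$$
so this is the one genuine obstacle, and it is exactly where the hypothesis $\lambda\leq\frac{3-\sqrt{5}}{2}$ is needed. The polynomial $p$ is an upward parabola in $c$ with vertex at $c=\frac{1-\lambda}{2}$. Since $\lambda\leq\frac{3-\sqrt{5}}{2}<\frac12$, one has $\frac{1-\lambda}{2}<(1-\lambda)^2$, so the vertex lies strictly to the left of the admissible range $c\geq(1-\lambda)^2$; hence $p$ is increasing there and it suffices to test the left endpoint. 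I would then compute
$$p\big((1-\lambda)^2\big)=(1-\lambda)\,\lambda^2\,(3-\lambda)>0,$$
which establishes $p(c)\geq 0$ throughout the region and closes these three cases. Collecting the eight verified inequalities gives $L_i\cap L_{i+1}\neq\emptyset$ for all $1\leq i\leq 8$, as claimed.
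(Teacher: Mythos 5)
Your proof is correct and follows essentially the same route as the paper: both reduce the lemma to the eight adjacent-overlap inequalities $(i+1)_l\le i_r$, and both observe that the only nontrivial ones collapse to the single cubic $c^2+c\lambda-c+\lambda-\lambda^3\ge 0$ (your $p(c)\ge 0$). The only difference is in how that cubic is verified: you use monotonicity of the parabola in $c$ past its vertex together with the endpoint value $p\bigl((1-\lambda)^2\bigr)=\lambda^2(1-\lambda)(3-\lambda)$, whereas the paper multiplies the bounds $\lambda\ge 1-c-\lambda\ge 0$ and $1-\lambda^2\ge c\ge 0$ to get $\lambda(1-\lambda^2)\ge c(1-c-\lambda)$; both arguments are valid.
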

\begin{proof}
Note that $$L_4= \left[\dfrac{c-\lambda^2}{ c}, \dfrac{ c}{c-\lambda^2}\right],  L_5= \left[\frac{
1-\lambda }{1-\lambda +\lambda c},\frac{1-\lambda +\lambda c}{1-\lambda }\right],$$ if 
$\frac{
1-\lambda }{1-\lambda +\lambda c}\geq \dfrac{c-\lambda^2}{ c}$. 

\noindent\textbf{Case 1.} 
$$1_r-2_l=\frac{c}{1-\lambda ^{2}}-\frac{c-\lambda ^{2}}{
1-\lambda +\lambda c}=\allowbreak -\frac{\lambda }{\left( \lambda
^{2}-1\right) \left( c\lambda -\lambda +1\right) }\left( c^{2}+c\lambda
-c-\lambda ^{3}+\lambda \right)\geq 0.$$
Note that $c<1-\lambda<1-\lambda^2$ and $\lambda\geq (1-c-\lambda)$ (Lemma \ref{frequentlyused}), therefore, 
$$\lambda(1-\lambda^2)\geq c (1-c-\lambda)\Leftrightarrow c^{2}+c\lambda
-c-\lambda ^{3}+\lambda\geq 0.$$

\noindent\textbf{Case 2. }
$$2_r-3_l=\frac{c}{1-\lambda }-(1-\lambda )=\dfrac{1}{1-\lambda}(-\lambda^2+2\lambda+c-1)=\dfrac{1}{1-\lambda}(c-(1-\lambda)^2)\geq 0.$$

\noindent\textbf{Case 3. }
 \begin{eqnarray*}
 3_r-4_l=
\frac{1-\lambda +\lambda c}{1-\lambda ^{2}}-\frac{c-\lambda^2 }{
 c}= -\frac{1}{c}\frac{\lambda }{\lambda ^{2}-1}%
\left( c^{2}+c\lambda -c-\lambda ^{3}+\lambda \right) \geq 0.\\
\end{eqnarray*}
\noindent\textbf{Case 4.}

 \begin{eqnarray*}
 4_r-5_l=\frac{ c}{c-\lambda^2}-\dfrac{1-\lambda
}{1-\lambda+\lambda c}= \lambda \frac{c^{2}-\lambda ^{2}+\lambda }{c^{2}\lambda
-c\lambda ^{3}-c\lambda +c+\lambda ^{3}-\lambda ^{2}}\geq 0.\\
\end{eqnarray*}

\noindent\textbf{Case 5.}

 \begin{eqnarray*}
  5_r-6_l=
\frac{1-\lambda +\lambda c}{1-\lambda }-(1-\lambda
^{2})=-\frac{\lambda }{\lambda -1}\left( -\lambda ^{2}+\lambda
+c\right) \geq 0.
\end{eqnarray*}

\noindent\textbf{Case 6.}

 \begin{eqnarray*}
  6_r-7_l=
\frac{1}{1-\lambda ^{2}}-\frac{1-\lambda ^{2}}{1-\lambda
+\lambda c}=\allowbreak -\frac{\lambda }{\left( \lambda ^{2}-1\right) \left(
c\lambda -\lambda +1\right) }\left( -\lambda ^{3}+2\lambda +c-1\right) \geq 0.
\end{eqnarray*}
Here 
$ -\lambda ^{3}+2\lambda +c-1\geq  -\lambda ^{2}+2\lambda +c-1=c-(1-\lambda)^2$. 

\noindent\textbf{Case 7.}

 \begin{eqnarray*}
  7_r-8_l=
\frac{1}{1-\lambda }-\frac{1-\lambda }{c}=\allowbreak -\frac{1}{%
c\left( \lambda -1\right) }\left( -\lambda ^{2}+2\lambda +c-1\right) \geq 0.
\end{eqnarray*}

\noindent\textbf{Case 8.}

 \begin{eqnarray*}
  8_r-9_l=
\frac{1-\lambda +\lambda c}{c-\lambda ^{2}}-\frac{1-\lambda ^{2}%
}{c}=\allowbreak \frac{1}{c}\frac{\lambda }{c-\lambda ^{2}}\left(
c^{2}+c\lambda -c-\lambda ^{3}+\lambda \right) \geq 0.
\end{eqnarray*}
\end{proof}
\begin{lemma}\label{case2}
Suppose $(\lambda,c)$ satisfies the conditions in Lemma \ref{small}.
If $$\frac{
1-\lambda }{1-\lambda +\lambda c}< \dfrac{c-\lambda^2}{ c},$$ then  $L_i\cap L_{i+1}\neq \emptyset, 1\leq i\leq 8$.
\end{lemma}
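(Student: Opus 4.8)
The plan is to run the same argument as in Lemma \ref{case1}, taking advantage of the fact that the two cases differ only in that the roles of $L_4$ and $L_5$ are exchanged. Under the standing hypothesis $\frac{1-\lambda}{1-\lambda+\lambda c}<\frac{c-\lambda^2}{c}$ we now have
$$L_4=\left[\frac{1-\lambda}{1-\lambda+\lambda c},\frac{1-\lambda+\lambda c}{1-\lambda}\right],\qquad L_5=\left[\frac{c-\lambda^2}{c},\frac{c}{c-\lambda^2}\right],$$
so the endpoints of $L_4$ and $L_5$ are precisely interchanged relative to Lemma \ref{case1}. As in that proof, since the left endpoints are already known to be increasing, each overlap $L_i\cap L_{i+1}\neq\emptyset$ reduces to verifying $i_r-(i+1)_l\geq 0$. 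The inequalities indexed by $i\in\{1,2,6,7,8\}$ involve none of the endpoints of $L_4$ or $L_5$, hence their verification is word-for-word identical to the corresponding cases of Lemma \ref{case1} and I would simply cite them. Only $i\in\{3,4,5\}$ require fresh computation.

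Two of these are immediate and need no extra hypothesis. For Case 4, clearing denominators in
$$4_r-5_l=\frac{1-\lambda+\lambda c}{1-\lambda}-\frac{c-\lambda^2}{c}$$
produces the numerator $\lambda\bigl(c^2+\lambda-\lambda^2\bigr)$ over the positive quantity $c(1-\lambda)$, and this is positive because $\lambda-\lambda^2=\lambda(1-\lambda)>0$. For Case 5,
$$5_r-6_l=\frac{c}{c-\lambda^2}-(1-\lambda^2)$$
has numerator $\lambda^2\bigl(1+c-\lambda^2\bigr)$ over $c-\lambda^2>0$, which is again manifestly positive. So both of these overlaps hold automatically.

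The delicate case, and the one place I expect genuine work, is Case 3, namely
$$3_r-4_l=\frac{1-\lambda+\lambda c}{1-\lambda^2}-\frac{1-\lambda}{1-\lambda+\lambda c}.$$
Here expanding $(1-\lambda+\lambda c)^2-(1-\lambda)(1-\lambda^2)$ and factoring out $\lambda$ gives the numerator $\lambda\bigl[(1-\lambda)\bigl(2c-(1-\lambda)\bigr)+\lambda c^2\bigr]$ over the positive denominator $(1-\lambda^2)(1-\lambda+\lambda c)$. Unlike the previous two, positivity is not automatic: it hinges on $2c-(1-\lambda)\geq 0$, i.e. $2c+\lambda-1\geq 0$. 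This is exactly the inequality supplied by Lemma \ref{frequentlyused}, applicable because the hypotheses of Lemma \ref{small} together with Lemma \ref{fund} place $(\lambda,c)$ in the orange region of Figure 1; it yields $3_r-4_l\geq 0$. Having thus established $i_r-(i+1)_l\geq 0$ for all $1\leq i\leq 8$, the consecutive intervals $L_1,\dots,L_9$ overlap, which is precisely the assertion of the lemma.
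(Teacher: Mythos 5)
Your proposal is correct and follows essentially the same route as the paper: both reduce the claim to the three overlaps $3_r-4_l$, $4_r-5_l$, $5_r-6_l$ (the only ones touched by swapping $L_4$ and $L_5$) and arrive at the same numerators. The only difference is cosmetic: for $3_r-4_l$ you factor the numerator as $\lambda\bigl[(1-\lambda)\bigl(2c-(1-\lambda)\bigr)+\lambda c^2\bigr]$ and invoke $2c+\lambda-1\geq 0$ from Lemma \ref{frequentlyused}, whereas the paper rewrites the same quantity as $c(c\lambda-2\lambda+2)\geq(1-\lambda)^2$ and uses $c\geq(1-\lambda)^2$ together with $\lambda\leq\tfrac{3-\sqrt{5}}{2}$; both justifications are valid under the hypotheses of Lemma \ref{small}.
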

\begin{proof}
If  $\frac{
1-\lambda }{1-\lambda +\lambda c}<\dfrac{c-\lambda^2}{ c}$, then 
$$L_4= \left[\frac{
1-\lambda }{1-\lambda +\lambda c},\frac{1-\lambda +\lambda c}{1-\lambda }\right],  L_5= \left[\dfrac{c-\lambda^2}{ c}, \dfrac{ c}{c-\lambda^2}\right].$$
With a similar discussion of  Lemma \ref{case1}, it suffices to prove  the following three cases.

\noindent\textbf{Case 1. }
 \begin{eqnarray*}
 3_r-4_l&=&
\frac{1-\lambda +\lambda c}{1-\lambda ^{2}}-\frac{1-\lambda }{1-\lambda
+\lambda c}\\
&=&\allowbreak -\frac{\lambda }{\left( \lambda ^{2}-1\right) \left(
c\lambda -\lambda +1\right) }\left( c^{2}\lambda -2c\lambda +2c-\lambda
^{2}+2\lambda -1\right) \geq 0\allowbreak.\\
\end{eqnarray*}
Here $$c^{2}\lambda -2c\lambda +2c-\lambda
^{2}+2\lambda -1\geq 0$$
is equivalent to $c(c\lambda-2\lambda+2)\geq (1-\lambda)^2$. Since 
$c\geq (1-\lambda)^2$, it suffices to prove that 
$$c\lambda-2\lambda+2\geq 1.$$ By the assumption $0<\lambda\leq \dfrac{3-\sqrt{5}}{2}$, the above inequality holds.

\noindent\textbf{Case 2.}

 \begin{eqnarray*}
 4_r-5_l=\frac{1-\lambda +\lambda c}{1-\lambda }-\frac{c-\lambda ^{2}}{c%
}=\allowbreak -\frac{1}{c}\frac{\lambda }{\lambda -1}\left( c^{2}-\lambda
^{2}+\lambda \right) \geq 0.
\end{eqnarray*}

\noindent\textbf{Case 3.}

 \begin{eqnarray*}
 5_r-6_l=
\frac{c}{c-\lambda ^{2}}-(1-\lambda ^{2})=\allowbreak \frac{\lambda ^{2}%
}{c-\lambda ^{2}}\left( -\lambda ^{2}+c+1\right) \geq 0.
\end{eqnarray*}

\end{proof}

\begin{proof}[Proof of Lemma \ref{small}]
Lemma \ref{small} follows from 
 Lemmas \ref{case1},  \ref{case2}, \ref{lem1}, and  \ref{lem}. 
\end{proof}
\begin{theorem}
If $c\geq (1-\lambda)^2$, then 
$\dfrac{K}{K}=[0,\infty)$. 
\end{theorem}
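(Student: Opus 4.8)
The plan is to patch together the two regimes already isolated: Lemma \ref{big} for large contraction ratios and Lemma \ref{small} for small ones, the whole subtlety being to inflate the single ``seed'' interval produced by Lemma \ref{small} into the entire half-line. For $\frac{3-\sqrt{5}}{2}\le\lambda<1$ there is nothing left to do, since Lemma \ref{big} already gives $\frac{K}{K}=[0,\infty)$ (and in this range one automatically has $c\ge\lambda\ge(1-\lambda)^2$, so the hypothesis is consistent). I may therefore assume $0<\lambda\le\frac{3-\sqrt{5}}{2}$, where Lemma \ref{small} supplies
\[
\frac{K}{K}\supset\Big[c-\lambda^2,\ \tfrac{1}{c-\lambda^2}\Big].
\]

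The key structural fact I would exploit is that $\frac{K}{K}$ is invariant under multiplication by $\lambda^{\pm1}$. Indeed, since $f_1(K)=\lambda K\subset K$, for any $x,y\in K$ with $y\neq0$ both $\lambda x$ and $\lambda y$ lie in $K$, whence
\[
\lambda\cdot\frac{x}{y}=\frac{\lambda x}{y}\in\frac{K}{K},\qquad
\frac1\lambda\cdot\frac{x}{y}=\frac{x}{\lambda y}\in\frac{K}{K}.
\]
Iterating gives $\lambda^k\cdot\frac{K}{K}\subset\frac{K}{K}$ for every $k\in\mathbb{Z}$, so applying this to the seed interval yields
\[
\frac{K}{K}\supset\bigcup_{k\in\mathbb{Z}}\lambda^k\Big[c-\lambda^2,\ \tfrac{1}{c-\lambda^2}\Big]
=\bigcup_{k\in\mathbb{Z}}\Big[\lambda^k(c-\lambda^2),\ \tfrac{\lambda^k}{c-\lambda^2}\Big].
\]

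It then remains to verify that consecutive scaled copies overlap, so that their union is the full ray. The copies indexed by $k$ and $k+1$ meet precisely when the right endpoint of the $(k+1)$-th dominates the left endpoint of the $k$-th, that is
\[
\frac{\lambda^{k+1}}{c-\lambda^2}\ge\lambda^k(c-\lambda^2)
\quad\Longleftrightarrow\quad (c-\lambda^2)^2\le\lambda.
\]
Since $\lambda^k(c-\lambda^2)\to0$ as $k\to+\infty$ and $\lambda^k/(c-\lambda^2)\to\infty$ as $k\to-\infty$, once this overlap condition holds the union equals $(0,\infty)$; adjoining $0=\frac{0}{1}\in\frac{K}{K}$ and using $K\subset[0,1]$ for the trivial reverse inclusion $\frac{K}{K}\subset[0,\infty)$ gives $\frac{K}{K}=[0,\infty)$.

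The one genuine obstacle is the inequality $(c-\lambda^2)^2\le\lambda$, and I expect this to be where all the arithmetic concentrates. I would establish it from the basic constraints of Lemma \ref{fund}: from $\lambda\le c\le 2\lambda$ one gets $0\le c-\lambda^2\le 2\lambda-\lambda^2=\lambda(2-\lambda)$, so it suffices to show $\lambda(2-\lambda)^2\le1$ on $0<\lambda\le\frac{3-\sqrt{5}}{2}$. The function $g(\lambda)=\lambda(2-\lambda)^2$ has derivative $(2-\lambda)(2-3\lambda)>0$ on this range, hence is increasing, and a direct evaluation gives $g\big(\tfrac{3-\sqrt{5}}{2}\big)=1$; therefore $g(\lambda)\le1$ throughout, which is exactly what is needed. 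This dovetails with the threshold $\frac{3-\sqrt{5}}{2}$ that separates the two cases, explaining why the split is made precisely there.
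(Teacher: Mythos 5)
Your proposal is correct and follows essentially the same route as the paper: the same case split at $\lambda=\frac{3-\sqrt{5}}{2}$, the same seed interval $\bigl[c-\lambda^{2},\frac{1}{c-\lambda^{2}}\bigr]$ from Lemma \ref{small}, and the same key overlap inequality $(c-\lambda^{2})^{2}\le\lambda$. The only (cosmetic) difference is that you verify $\lambda(2-\lambda)^{2}\le 1$ by monotonicity of $g(\lambda)=\lambda(2-\lambda)^{2}$, whereas the paper factors the equivalent cubic as $(\lambda-1)(\lambda^{2}-3\lambda+1)\le 0$; you also spell out the $\lambda^{\pm1}$-invariance of $\frac{K}{K}$, which the paper uses implicitly.
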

\begin{proof}
If $ \dfrac{3-\sqrt{5}}{2}\leq \lambda<1$, then by Lemma \ref{big},  $\dfrac{K}{K}=[0,\infty)$. 
If $0<\lambda<\dfrac{3-\sqrt{5}}{2}$ and $c\geq (1-\lambda)^2$, in terms of Lemma \ref{small}
we have $\dfrac{K}{K}\supset [c-\lambda ^{2},\frac{1}{c-\lambda ^{2}}]$. We prove
$$\frac{\lambda }{c-\lambda ^{2}}-(c-\lambda ^{2})\geq 0.$$ 
Recall $c\leq 2\lambda$, if we can show  $2\lambda\leq \lambda^2+\sqrt{\lambda}$, then we prove 
$$\frac{\lambda }{c-\lambda ^{2}}-(c-\lambda ^{2})\geq 0.$$ 
However, $2\lambda\leq \lambda^2+\sqrt{\lambda}$ is equivalent to 
$$\lambda^3-4\lambda^2+4\lambda-1=(\lambda-1)(\lambda^2-3\lambda+1)\leq 0,$$
 which is a consequence of $0<\lambda<\dfrac{3-\sqrt{5}}{2}$. 
Therefore,  in terms of $$\frac{\lambda }{c-\lambda ^{2}}-(c-\lambda ^{2})\geq 0,$$ we conclude that 
$$ [0,\infty)=\{0\}\cup  \cup_{k=-\infty}^{\infty}\lambda^k[c-\lambda ^{2},\frac{1}{c-\lambda ^{2}}]\subset \dfrac{K}{K}\subset [0,\infty).$$
\end{proof}
\begin{lemma}\label{lem+}
Let $I=[a,a+t], J=[b,b+t]$ be two basic intervals, where $b\geq a.$
If $c\geq (1-\lambda)^2$,  then $f(I,J)=f(\tilde{I}, \tilde{J})$, where $f(x,y)=x+y$. 
\end{lemma}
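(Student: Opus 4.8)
The plan is to mirror the structure of the proof of Lemma \ref{lem1}, but the computation will be dramatically simpler because addition is commutative and the sumset of two intervals is trivial to track. First I would record the two children-decompositions exactly as in Lemma \ref{lem1}, writing $\tilde I = [a, a+ct] \cup [a+(1-\lambda)t, a+t]$ and $\tilde J = [b, b+ct] \cup [b+(1-\lambda)t, b+t]$, and noting that $f(I,J) = I+J = [a+b, a+b+2t]$ is a single interval. Since $\tilde I \subset I$ and $\tilde J \subset J$, the inclusion $f(\tilde I, \tilde J) \subseteq f(I,J)$ is automatic, so the entire content of the lemma is the reverse inclusion $[a+b, a+b+2t] \subseteq f(\tilde I, \tilde J)$.

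Next I would expand $f(\tilde I, \tilde J)$ as the union of the four sumsets obtained by adding a piece of $\tilde I$ to a piece of $\tilde J$. Two of these coincide — the ``left $+$ right'' and ``right $+$ left'' pieces are equal because addition is symmetric — so after translating everything by $-(a+b)$ and measuring in units of $t$ one is left with exactly three intervals
\[
[0,\, 2c], \qquad [1-\lambda,\, 1+c], \qquad [2(1-\lambda),\, 2].
\]
Their left endpoints satisfy $0 < 1-\lambda < 2(1-\lambda)$, so they are already in increasing order; moreover the first starts at $0$ and the last ends at $2$ (and using $c \leq 2\lambda$ together with $c+\lambda<1$ one sees $2c < 2$ and $1+c < 2$, so neither of the other right endpoints overshoots). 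Hence the assertion $f(\tilde I, \tilde J) = [a+b, a+b+2t]$ is equivalent to the statement that these three intervals leave no gap.

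Finally I would reduce ``no gap'' to two scalar inequalities. Joining the first two intervals requires $1-\lambda \leq 2c$, i.e. $2c + \lambda - 1 \geq 0$; once joined they cover $[0, 1+c]$ (here one uses $c<1$, which is immediate from $c+\lambda<1$, to confirm that $1+c$, not $2c$, is the merged right endpoint). Joining this to the last interval then requires $2(1-\lambda) \leq 1+c$, i.e. $2\lambda + c - 1 \geq 0$. These are precisely the two inequalities supplied by Lemma \ref{frequentlyused}: the hypothesis $c \geq (1-\lambda)^2$, together with the standing constraints $\lambda \leq c \leq 2\lambda$ and $c+\lambda<1$ of Lemma \ref{fund}, places $(\lambda,c)$ in the orange region of Figure 1, where both $2c+\lambda-1\geq 0$ and $2\lambda+c-1\geq 0$ hold.

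I do not expect a genuine obstacle. Unlike the division case of Lemma \ref{lem1}, where the non-monotone behaviour of $x/y$ forced a case analysis of the endpoint differences $h_i-e_j$, the sumset is translation-invariant and the only thing to watch is the bookkeeping: correctly identifying which two of the four sub-sumsets coincide, and reading off the two gap conditions. The single mild point of care is verifying that $1+c$ is indeed the right endpoint of the merged first-plus-second interval, which follows at once from $c+\lambda<1$.
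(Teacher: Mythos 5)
Your proposal is correct and follows essentially the same route as the paper: the same decomposition of $f(\tilde I,\tilde J)$ into three intervals (the two cross sums coinciding), and the same two gap conditions $2c+\lambda-1\geq 0$ and $2\lambda+c-1\geq 0$ supplied by Lemma \ref{frequentlyused}. The only difference is cosmetic (you normalize by translating by $-(a+b)$ and rescaling by $t$, while the paper computes the endpoint differences $1_r-2_l$ and $2_r-3_l$ directly).
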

\begin{proof}
Note that $\tilde{I}=[a,a+ct]\cup [a+(1-\lambda)t,a+t],\tilde{J}=[b,b+ct]\cup [b+(1-\lambda)t,b+t].$
Clearly, $$f(\widetilde{I},\widetilde{J})=J_1\cup J_2\cup J_3, $$
where 
\begin{eqnarray*}
J_1&=&[a+b,a+b+2ct]=[1_l,1_r],   \\
J_2&=&\lbrack a+b+(1-\lambda
)t,a+ct+b+t]=[2_l,2_r],\\
J_3&=&\lbrack a+b+2(1-\lambda )t,a+b+2t]=[3_l,3_r].\\
\end{eqnarray*}
By virtue of   Lemma \ref{frequentlyused}, it follows that 
$$1_{r}-2_{l}=a+b+2ct-(a+b+(1-\lambda )t)=\allowbreak t\left( 2c+\lambda
-1\right)\geq 0, $$
$$2_{r}-3_{l}=a+ct+b+t-(a+b+2(1-\lambda )t)=\allowbreak t\left( c+2\lambda
-1\right)\geq 0. $$
\end{proof}
\begin{lemma}\label{+}
If $c\geq (1-\lambda)^2$, then $K+K=[0,2]$. 
\end{lemma}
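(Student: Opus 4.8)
The plan is to derive the lemma almost directly from the self-replication principle already established, so that the whole argument is short. The inclusion $K+K\subseteq[0,2]$ is immediate from $K\subseteq[0,1]$, hence all the content lies in the reverse inclusion $[0,2]\subseteq K+K$.

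First I would observe that Lemma \ref{lem+} already verifies the crucial hypothesis of the master Lemma \ref{lem} for the function $F(x,y)=x+y$. Indeed, Lemma \ref{lem+} asserts $F(I,J)=F(\widetilde I,\widetilde J)$ for any two basic intervals $I=[a,a+t]$, $J=[b,b+t]$ of equal length (same rank) with $b\geq a$; since addition is symmetric, the ordering assumption $b\geq a$ is harmless and the identity holds for \emph{every} pair of equal-rank basic intervals $I\subset G_n$, $J\subset G_n'$. Thus, taking $U=\mathbb{R}^2$ and $[A,B]=[M,N]=[0,1]$ (so that $K\cap[A,B]=K\cap[M,N]=K$), Lemma \ref{lem} collapses the whole Minkowski-type sum to a single rank: $K+K=F(K,K)=F(G_{k_0},G_{k_0}')=G_{k_0}+G_{k_0}'$.

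It then remains to compute this sum at one concrete rank. Taking $k_0=1$, one has $G_1=G_1'=\widetilde{[0,1]}=f_1(I)\cup f_2(I)\cup f_3(I)=[0,c]\cup[1-\lambda,1]$, where the first two pieces $[0,\lambda]$ and $[c-\lambda,c]$ merge into $[0,c]$ because $c\leq 2\lambda$ and $c\geq\lambda$. A direct expansion then gives
\begin{equation*}
G_1+G_1=[0,2c]\cup[1-\lambda,1+c]\cup[2-2\lambda,2],
\end{equation*}
and this union fills $[0,2]$ precisely when $2c\geq 1-\lambda$ and $1+c\geq 2-2\lambda$, i.e. when $2c+\lambda-1\geq 0$ and $c+2\lambda-1\geq 0$. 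These are exactly the two inequalities guaranteed by Lemma \ref{frequentlyused}, which applies because the standing constraints $\lambda\leq c\leq 2\lambda$, $c+\lambda<1$ of Lemma \ref{fund} together with the hypothesis $c\geq(1-\lambda)^2$ place $(\lambda,c)$ in the orange region. Hence $G_1+G_1=[0,2]$ and so $K+K=[0,2]$.

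Since the real work has been outsourced to Lemma \ref{lem+}, I do not expect a genuine obstacle here. The only place demanding care is the base-case computation of the previous paragraph: one must confirm both that the two outer blocks $[0,\lambda]$ and $[c-\lambda,c]$ coalesce into $[0,c]$, and that the three translated copies $[0,2c]$, $[1-\lambda,1+c]$, $[2-2\lambda,2]$ overlap consecutively with no gap, the latter being exactly the content of the two inequalities supplied by Lemma \ref{frequentlyused}. Everything else is a formal application of the two cited lemmas.
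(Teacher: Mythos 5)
Your proof is correct and follows essentially the same route as the paper's: both reduce $K+K$ to a rank-one computation via Lemma \ref{lem} and Lemma \ref{lem+}, then close the gaps using $2c+\lambda-1\geq 0$ and $c+2\lambda-1\geq 0$ from Lemma \ref{frequentlyused}. The only (harmless) difference is that you take the full first-level set $G_1=[0,c]\cup[1-\lambda,1]$ and land on $[0,2]$ directly, whereas the paper uses $I=J=[c-\lambda,c]\cup[1-\lambda,1]$ and finishes with the extra step $\{0\}\cup\bigcup_{k\geq 0}\lambda^{k}\left[2(c-\lambda),2\right]=[0,2]$.
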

\begin{proof}
By Lemmas \ref{lem} and  \ref{lem+}.
Take
$I=J=[c-\lambda ,c]\cup \lbrack 1-\lambda ,1]$. 
Therefore, for $f(x,y)=x+y$, we have 
$$f(I,J)=[2(c-\lambda ),2c]\cup \lbrack
c+1-2\lambda ,1+c]\cup \lbrack 2(1-\lambda ),2].$$
By Lemma \ref{frequentlyused}, we conculde that 
$$2c-(c+1-2\lambda )=\allowbreak c+2\lambda -1\geq 0, 1+c-(2(1-\lambda ))=\allowbreak c+2\lambda -1\geq 0.$$
Since $c\leq 2\lambda$, it follows that 
$$ [0,2]=\{0\}\cup  \cup_{k=0}^{\infty}\lambda^k[2(c-\lambda ),2]\subset \dfrac{K}{K}\subset [0,2].$$
\end{proof}
\begin{remark}
We may give another proof of this result. Note that $K+K$ is a self-similar set, namely, 
$$K+K=\left\{\sum_{i=1}^{\infty}\dfrac{a_i}{q^i}:a_i\in\{0, d_1, d_2, 2d_1, d_1+d_2,2d_2\}\right\},$$
where $d_1=\dfrac{c}{\lambda}-1, d_2=\dfrac{1}{\lambda}-1, q=\dfrac{1}{\lambda}.$
The IFS of $K+K$ is $\{g_i\}_{i=1}^{6}$, where 
$$g_1(x)=\dfrac{x}{q}, g_2(x)=\dfrac{x+d_1}{q},g_3(x)=\dfrac{x+d_2}{q},$$
 $$g_4(x)=\dfrac{x+2d_1}{q},\\
g_5(x)=\dfrac{x+d_1+d_2}{q},g_6(x)=\dfrac{x+2d_2}{q}.$$
Let $E=[0,2]$. It  is easy to check that if $2\lambda+c-1\geq 0$ and $2c+\lambda-1\geq 0$, then 
$$\cup_{i=1}^{6}g_i(E)=[0,2].$$
Therefore, if $c\geq (1-\lambda)^2$ (Lemma \ref{frequentlyused}), then $K+K=[0,2].$
\end{remark}
We shall use this idea to prove the following result. 
\begin{lemma}\label{-}
If $c\geq (1-\lambda)^2$, then $K-K=[-1,1]$. 
\end{lemma}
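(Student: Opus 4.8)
The plan is to follow the self-similar viewpoint from the preceding remark rather than the endpoint-tracking of Lemma \ref{lem}. Writing each point of $K$ as $x=\sum_{k=1}^{\infty}\lambda^{k-1}b_{i_k}$ with digits $b_{i_k}\in\{0,\,c-\lambda,\,1-\lambda\}$, a difference $x-y\in K-K$ is exactly $\sum_{k=1}^{\infty}\lambda^{k-1}(b_{i_k}-b_{j_k})$, where each coordinate difference ranges independently over the difference digit set
$$D=\{0,\,\pm(c-\lambda),\,\pm(1-\lambda),\,\pm(1-c)\}.$$
Hence $K-K$ is the attractor of the IFS $\{h_d(x)=\lambda x+d\}_{d\in D}$, i.e. $K-K=\{\sum_{k\geq1}\lambda^{k-1}d_k:d_k\in D\}$. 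Since $\min K-\max K=-1$ and $\max K-\min K=1$, the convex hull of $K-K$ is $[-1,1]$, so it suffices to prove that $E_0=[-1,1]$ is invariant under this IFS, that is $\bigcup_{d\in D}h_d(E_0)=E_0$; uniqueness of the compact invariant set then forces $K-K=[-1,1]$.

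Next I would reduce the invariance identity to a covering check. Each $h_d(E_0)=[d-\lambda,d+\lambda]$ is an interval of radius $\lambda$ centred at $d$, and since $D=-D$ the whole configuration is symmetric about $0$; thus it is enough to show $\bigcup_{d\in D,\,d\geq0}[d-\lambda,d+\lambda]\supseteq[0,1]$. The four nonnegative centres are $0,\ c-\lambda,\ 1-c,\ 1-\lambda$. Using $\lambda\leq c\leq2\lambda$ (Lemma \ref{fund}) one gets $-\lambda\leq c-2\lambda\leq0$ and $c\leq3\lambda$, so the intervals from $0$ and $c-\lambda$ merge into $[-\lambda,c]$, covering $[0,c]$, while the interval from $1-\lambda$ reaches the right endpoint $1$. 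The only genuine gaps to rule out are between $[0,c]$ and the interval centred at $1-c$, and between that interval and the top interval centred at $1-\lambda$.

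The key steps are these two overlap inequalities. The first, $1-c-\lambda\leq c$, is exactly $2c+\lambda-1\geq0$; the second connects the remaining intervals and, since the relative order of $c-\lambda$ and $1-c$ is not fixed (it depends on whether $2c-\lambda\leq1$), is handled by showing the union reaches $\max(c,\,1-c+\lambda)$ and then that $1-2\lambda\leq\max(c,\,1-c+\lambda)$; the two subcases reduce to $c\leq3\lambda$ (from $c\leq2\lambda$) and to $2\lambda+c-1\geq0$. Both $2c+\lambda-1\geq0$ and $2\lambda+c-1\geq0$ are furnished by Lemma \ref{frequentlyused} once $c\geq(1-\lambda)^2$. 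I expect the main obstacle to be bookkeeping this case split cleanly: because $c-\lambda$ and $1-c$ may appear in either order, one should argue robustly through the covered-so-far endpoint rather than committing to a fixed sorted list of the four centres. With the covering established, $E_0=[-1,1]$ is invariant, and the uniqueness of the attractor gives $K-K=[-1,1]$.
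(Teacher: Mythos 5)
Your proof is correct and follows essentially the same route as the paper: both express $K-K$ as the attractor of the seven-map IFS with ratio $\lambda$ and difference digits $\{0,\pm(c-\lambda),\pm(1-c),\pm(1-\lambda)\}$ and then verify that $[-1,1]$ is invariant using $2c+\lambda-1\geq 0$ and $2\lambda+c-1\geq 0$ from Lemma \ref{frequentlyused}. The only difference is that you spell out the covering check (with the case split on the order of $c-\lambda$ and $1-c$) that the paper leaves as ``not difficult to check.''
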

\begin{proof}
First, 
$$K-K=\left\{\sum_{i=1}^{\infty}\dfrac{a_i}{q^i}:a_i\in\{-d_2, -d_1, d_1-d_2, 0, d_2-d_1, d_1, d_2\}\right\},$$
where $d_1=\dfrac{c}{\lambda}-1, d_2=\dfrac{1}{\lambda}-1, q=\dfrac{1}{\lambda}.$
The IFS of $K+K$ is $\{h_i\}_{i=1}^{7}$, where 
$$h_1(x)=\dfrac{x-d_2}{q}, h_2(x)=\dfrac{x-d_1}{q},h_3(x)=\dfrac{x+d_1-d_2}{q},$$
 $$h_4(x)=\dfrac{x}{q},
h_5(x)=\dfrac{x+d_2-d_1}{q},h_6(x)=\dfrac{x+d_1}{q}, h_7(x)=\dfrac{x+d_2}{q}.$$
Let $M=[-1,1]$.
It is not difficult to check that if $2c+\lambda-1\geq 0 $ and $2\lambda+c-1\geq 0 $, then 
$$\cup_{i=1}^{7}h_i(M)=[-1,1].$$
In other words, if $c\geq (1-\lambda)^2$ (which implies $2c+\lambda-1\geq 0 $ and $2\lambda+c-1\geq 0 $), then 
$K-K=[-1,1]$. 
\end{proof}
\begin{proof}[\textbf{Proof of Corollary \ref{Cor}}]
First, in \cite{XiKan1}, Tian et al. proved $$K\cdot K=[0,1]\mbox{ if and only if }c\geq (1-\lambda)^2.$$ Therefore, $(2)\Rightarrow(1)\Leftrightarrow (3)$. By Lemmas \ref{+}, \ref{-}, Theorems \ref{Main} and \ref{Main1}, 
$(3)\Rightarrow(2)$, we are done. 
\end{proof}
\section{The necessary and sufficient condition for $\sqrt{K}+\sqrt{K}=[0,2]$}
In this section, we prove that $\sqrt{K}+\sqrt{K}=[0,2]$ if and only if $\sqrt{c}+1\geq 2\sqrt{1-\lambda}.$
The necessary condition is trivial as $\sqrt{K}\subset [0,\sqrt{c}]\cup [\sqrt{1-\lambda}, 1]$, and therefore, $$\sqrt{K}+\sqrt{K}\subset [0,2\sqrt{c}] \cup [\sqrt{1-\lambda}, 1+\sqrt{c}] \cup [2\sqrt{1-\lambda}, 2].$$
If $\sqrt{K}+\sqrt{K}=[0,2]$, then $2\sqrt{c}\geq \sqrt{1-\lambda}, 1+\sqrt{c}\geq 2\sqrt{1-\lambda}.$ However, by Figure 1 and the definition of $K$ (we may  also use Lemma \ref{useful}), $1+\sqrt{c}\geq 2\sqrt{1-\lambda}$ implies $2\sqrt{c}\geq \sqrt{1-\lambda}$. 

Now we prove that if $\sqrt{c}+1\geq 2\sqrt{1-\lambda},$ then $\sqrt{K}+\sqrt{K}=[0,2]$. In other words, when $(\lambda, x)$ is  in the red region in Figure 1, then we must have $\sqrt{K}+\sqrt{K}=[0,2]$.
We give a simple outline of this proof.  Firstly, we prove that if $(\lambda, c)$ in the blue region of Figure 2, we have $\sqrt{K}+\sqrt{K}=[0,2]$, see Theorem \ref{blue}. Secondly, if $(\lambda, c)$ in the orange region of Figure 3, then 
$\sqrt{K}+\sqrt{K}=[0,2]$, see Theorem \ref{orange}.  Note that the union of orange region of Figure 3 and the blue region of Figure 2 is exactly the red region of Figure 1. Therefore, we prove the desired result. 

 The following lemma will be used intensively in this section. 
 \begin{lemma}\label{useful}
 If $(\lambda, c)$ satisfies the following conditions,  
\begin{equation*}
\left\lbrace\begin{array}{cc}
                     c<1-\lambda\\
                \lambda\leq c\leq 2 \lambda\\
                \sqrt{c}+1\geq 2\sqrt{1-\lambda},
                \end{array}\right.
\end{equation*}
 then 
 \begin{equation*}
\left\lbrace\begin{array}{cc}
                     c\geq (1-\lambda)^2\\
                2\lambda+c-1\geq 0\\
             \lambda+2c-1\geq 0\\
             4c+\lambda\geq 1.
                \end{array}\right.
\end{equation*}
 \end{lemma}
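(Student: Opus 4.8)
The plan is to treat Lemma~\ref{useful} as a purely algebraic statement: under the three standing hypotheses $c<1-\lambda$, $\lambda\le c\le 2\lambda$, and $\sqrt{c}+1\ge 2\sqrt{1-\lambda}$, I must verify four inequalities, and I would first organize them by dependency. The key observation is that three of the four follow once the first is in hand. Indeed, the first conclusion $c\ge(1-\lambda)^2$, together with the hypotheses $c<1-\lambda$ and $\lambda\le c\le 2\lambda$, places $(\lambda,c)$ in the orange region of Lemma~\ref{frequentlyused}; that lemma then delivers exactly $2\lambda+c-1\ge 0$ and $2c+\lambda-1\ge 0$, which are the second and third conclusions (note $\lambda+2c-1$ and $2c+\lambda-1$ coincide). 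So the substance of the proof is the first conclusion, plus a short separate argument for the fourth.

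For the fourth inequality $4c+\lambda\ge 1$, I would rewrite it as $\sqrt{c}\ge\tfrac12\sqrt{1-\lambda}$ and compare with the hypothesis $\sqrt{c}\ge 2\sqrt{1-\lambda}-1$. When $\lambda\le 5/9$, equivalently $\sqrt{1-\lambda}\ge\tfrac23$, one checks $2\sqrt{1-\lambda}-1\ge\tfrac12\sqrt{1-\lambda}$, so the hypothesis already yields the bound; when $\lambda>5/9$ I would instead use $c\ge\lambda$ to get $4c+\lambda\ge 5\lambda>1$ directly. This case split is clean and involves only elementary one-variable estimates, so I do not expect any difficulty here.

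The hard part will be the first conclusion $c\ge(1-\lambda)^2$. The tempting move---squaring $\sqrt{c}+1\ge 2\sqrt{1-\lambda}$---only produces $c\ge(2\sqrt{1-\lambda}-1)^2$, and since $2\sqrt{1-\lambda}-1\le 1-\lambda$ (this is just $-(\sqrt{1-\lambda}-1)^2\le 0$), the squared bound is \emph{weaker} than $(1-\lambda)^2$. Hence the square-root hypothesis alone cannot suffice, and the constraints $\lambda\le c$ and $c\le 2\lambda$ must enter essentially. My plan is to combine $c\ge\lambda$ with the elementary fact that $\lambda\ge(1-\lambda)^2$ precisely when $\lambda\ge\frac{3-\sqrt{5}}{2}$: on that range $c\ge\lambda\ge(1-\lambda)^2$ is immediate. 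The remaining, genuinely delicate regime is small $\lambda$, where I would feed $c\le 2\lambda$ into the hypothesis via $\sqrt{2\lambda}+1\ge\sqrt{c}+1\ge 2\sqrt{1-\lambda}$, squaring to obtain the lower bound $6(1-\lambda)-4\sqrt{1-\lambda}-1\le 0$ on the feasible $\lambda$, and then compare this threshold against $\frac{3-\sqrt{5}}{2}$. I expect this boundary comparison to be the crux: the two thresholds are close, so it is exactly here that I would check the estimates most carefully, and it is the step most likely to require the full strength of all three hypotheses acting together rather than any one of them in isolation.
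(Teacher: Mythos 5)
Your instinct that the first conclusion is the crux, and that squaring $\sqrt{c}+1\ge 2\sqrt{1-\lambda}$ only yields the weaker bound $c\ge(2\sqrt{1-\lambda}-1)^2\le(1-\lambda)^2$, is exactly right --- but the boundary comparison you defer to the end cannot be made to work, because the first conclusion is false. Your own feasibility computation gives $6(1-\lambda)-4\sqrt{1-\lambda}-1\le 0$, i.e. $\sqrt{1-\lambda}\le\frac{2+\sqrt{10}}{6}\approx 0.8604$, i.e. roughly $\lambda\ge 0.2598$, which is strictly below $\frac{3-\sqrt{5}}{2}\approx 0.3820$; in that gap the easy chain $c\ge\lambda\ge(1-\lambda)^2$ is unavailable and genuine counterexamples exist. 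Concretely, $\lambda=0.3$, $c=0.46$ satisfies $c<1-\lambda=0.7$, $\lambda\le c\le 2\lambda=0.6$, and $\sqrt{0.46}+1\approx 1.6782\ge 2\sqrt{0.7}\approx 1.6733$, yet $c=0.46<0.49=(1-\lambda)^2$. The true implication runs the other way: $c\ge(1-\lambda)^2$ gives $\sqrt{c}\ge 1-\lambda\ge 2\sqrt{1-\lambda}-1$ because $(\sqrt{1-\lambda}-1)^2\ge 0$. There is no proof in the paper to compare against --- its entire argument is the sentence ``The proof is due to Figure 1'' --- and the remark before Corollary \ref{Cor} states this implication backwards; Lemma \ref{useful}'s first conclusion inherits that error. (Only conclusions (2)--(4) are actually invoked later in the paper, and Corollary \ref{Cor} needs the reverse, correct, implication.)

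Since you route the second and third conclusions through the first one via Lemma \ref{frequentlyused}, those derivations collapse with it, although both statements are true and can be rescued directly from the third hypothesis: squaring (legitimate, since $\lambda\le c<1-\lambda$ forces $\lambda<1/2$ and hence $2\sqrt{1-\lambda}>1$) gives $c\ge 4(1-\lambda)-4\sqrt{1-\lambda}+1$, whence $2\lambda+c-1\ge 2(\sqrt{1-\lambda}-1)^2\ge 0$; for $\lambda+2c-1\ge 0$ the same bound yields $\lambda+2c-1\ge 7u^2-8u+2$ with $u=\sqrt{1-\lambda}$, which is nonnegative for $u\ge\sqrt{2/3}$ (i.e. $\lambda\le 1/3$) since $\sqrt{2/3}$ exceeds the larger root $\frac{4+\sqrt{2}}{7}$, while for $\lambda\ge 1/3$ one has $\lambda+2c-1\ge 3\lambda-1\ge 0$. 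Your argument for $4c+\lambda\ge 1$ is correct as written, and its second case $\lambda>5/9$ is vacuous because $\lambda<1/2$.
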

 \begin{proof}
 The proof is due to Figure 1.
 \end{proof}
 The following lemma guarantees  that Lemma \ref{lem} can be used when we calculate $\sqrt{K}+\sqrt{K}.$
\begin{lemma}\label{sqrt1}
Let $I=[a,a+t],J=[b,b+t]$ be two basic intervals.
Suppose that
\begin{eqnarray*}
b &\geq &a\geq (1-c-\lambda )^{2} \\
8a(2\lambda +c-1) &\geq &t(3-4\lambda -4\lambda c-c^{2}-2c).
\end{eqnarray*}
\noindent
Then
\begin{equation*}
f(I,J)=f(\tilde{I},\tilde{J}),
\end{equation*}
\noindent
where $f(x,y)=\sqrt{x}+\sqrt{y}.$
\end{lemma}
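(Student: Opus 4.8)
The plan is to run exactly the mechanism of Lemmas \ref{lem1} and \ref{lem+}. Since $f(x,y)=\sqrt{x}+\sqrt{y}$ is continuous and strictly increasing in each variable, and since $\tilde I=[a,a+ct]\cup[a+(1-\lambda)t,a+t]$ and $\tilde J=[b,b+ct]\cup[b+(1-\lambda)t,b+t]$, the image $f(I,J)$ is the single interval $[\sqrt a+\sqrt b,\ \sqrt{a+t}+\sqrt{b+t}]$, whereas $f(\tilde I,\tilde J)$ is the union of the four intervals $A_{pq}=f(I_p,J_q)$, $p,q\in\{1,2\}$, coming from the pieces $I_1=[a,a+ct]$, $I_2=[a+(1-\lambda)t,a+t]$ of $\tilde I$ and the analogous pieces $J_1,J_2$ of $\tilde J$. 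The inclusion $f(\tilde I,\tilde J)\subseteq f(I,J)$ is automatic, so everything reduces to the reverse inclusion, i.e.\ to showing that the four intervals $A_{pq}$ cover $[\sqrt a+\sqrt b,\ \sqrt{a+t}+\sqrt{b+t}]$ without gaps.

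First I would fix the ordering of the endpoints. Writing $A_{pq}=[\ell_{pq},r_{pq}]$, the fact that $x\mapsto\sqrt{x+s}-\sqrt{x}$ is decreasing, together with $b\ge a$, shows that both the left endpoints and the right endpoints increase along the order $11,12,21,22$; in particular $\ell_{11}=\sqrt a+\sqrt b$ and $r_{22}=\sqrt{a+t}+\sqrt{b+t}$ are the extreme endpoints. Hence the covering is equivalent to the three ``no-gap'' inequalities $r_{11}\ge\ell_{12}$, $r_{12}\ge\ell_{21}$ and $r_{21}\ge\ell_{22}$. Each is a comparison of sums of square roots, and the uniform device throughout is the rationalization $\sqrt{u}-\sqrt{v}=(u-v)/(\sqrt u+\sqrt v)$.

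The first two inequalities are the easy ones. Rationalizing $r_{11}\ge\ell_{12}$ and using $b\ge a$ to bound the positive terms from below collapses it to $c\ge 1-\lambda-c$, that is $2c+\lambda-1\ge0$, which holds by Lemma \ref{useful}. Rationalizing $r_{12}\ge\ell_{21}$ turns it into
\[
\sqrt{a+ct}+\sqrt{a+(1-\lambda)t}\ \ge\ (1-\lambda-c)\bigl(\sqrt{b}+\sqrt{b+t}\bigr);
\]
here the left side is at least $2\sqrt a\ge 2(1-c-\lambda)$ by the hypothesis $a\ge(1-c-\lambda)^2$, while the right side is at most $2(1-\lambda-c)$ because $b,b+t\le 1$. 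This is precisely the step that consumes the first hypothesis $a\ge(1-c-\lambda)^2$.

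The main obstacle is the third inequality $r_{21}\ge\ell_{22}$. After rationalizing and using $b\ge a$ on the positive side, it reduces to an inequality in $a,t,\lambda,c$ alone, namely
\[
\lambda\sqrt{a+ct}+(2\lambda+c-1)\sqrt{a+(1-\lambda)t}\ \ge\ (1-\lambda-c)\sqrt{a+t},
\]
whose left-hand coefficients are nonnegative because $2\lambda+c-1\ge0$ (Lemma \ref{useful}). Both sides being nonnegative, I would square, bound the surviving cross term $\sqrt{(a+ct)(a+(1-\lambda)t)}$ from below by a suitable elementary estimate, and collect terms; the resulting polynomial inequality is exactly the second hypothesis $8a(2\lambda+c-1)\ge t(3-4\lambda-4\lambda c-c^2-2c)$. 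This quantitative, scale-dependent estimate is the crux: unlike the first two inequalities it cannot be absorbed into the uniform bounds of Lemma \ref{useful}, and it genuinely controls the ratio $a/t$. Once all three no-gap inequalities are in hand we obtain $f(I,J)=f(\tilde I,\tilde J)$, which is the precise hypothesis needed to feed this lemma into Lemma \ref{lem} for the computation of $\sqrt{K}+\sqrt{K}$.
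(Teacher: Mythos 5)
Your overall architecture is the same as the paper's: the same four subintervals of $f(\tilde I,\tilde J)$, the same ordering argument via $b\ge a$ and the monotonicity of $x\mapsto\sqrt{x+s}-\sqrt{x}$, and the same three no-gap inequalities. Your treatment of the first two gaps matches the paper's (rationalization, then $2c+\lambda-1\ge0$ for the first; rationalization, $a\ge(1-c-\lambda)^2$ and $b+t\le1$ for the second). The problem is the third gap, which is where the second hypothesis must enter, and there your argument as described does not close.

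You reduce $r_{21}\ge\ell_{22}$, correctly, to
\[
\lambda\sqrt{a+ct}+(2\lambda+c-1)\sqrt{a+(1-\lambda)t}\ \ge\ (1-\lambda-c)\sqrt{a+t},
\]
but the claim that squaring this and bounding the cross term yields \emph{exactly} the hypothesis $8a(2\lambda+c-1)\ge t(3-4\lambda-4\lambda c-c^2-2c)$ is false. After squaring, the coefficient of $a$ on the left is at most
\[
\lambda^2+(2\lambda+c-1)^2+2\lambda(2\lambda+c-1)=(3\lambda+c-1)^2,
\]
since $\sqrt{(a+ct)(a+(1-\lambda)t)}\le a+(1-\lambda)t$, so the best possible $a$-coefficient in the difference of the squares is $(3\lambda+c-1)^2-(1-\lambda-c)^2=4\lambda(2\lambda+c-1)$, which is strictly smaller than $8(2\lambda+c-1)$ (here $\lambda<1/2$). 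So your route produces a genuinely different polynomial condition — you have given away too much by using $b\ge a$ to eliminate $b$ \emph{before} squaring — and you neither write that condition down nor check that it follows from the stated hypotheses; it does not follow formally from them. The paper avoids this by squaring the unrationalized inequality $\sqrt{a+t}+\sqrt{b+ct}\ge\sqrt{a+(1-\lambda)t}+\sqrt{b+(1-\lambda)t}$ twice, keeping $b$ in play, and only at the final polynomial stage applying $b\ge a$ and $\sqrt{(a+t)(b+ct)}\ge a$ (legitimate since $c+2\lambda-1\ge0$); that computation lands exactly on $8a(2\lambda+c-1)+t(4\lambda+4\lambda c+c^2+2c-3)\ge0$. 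To repair your proof you would either have to redo Case 3 this way, or explicitly state your weaker-coefficient inequality and prove it holds in the parameter region where the lemma is invoked — neither of which is done in the proposal.
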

\begin{proof}
By the definition of $I,J,$ we clearly have 
\begin{eqnarray*}
\tilde{I} &=&[a,a+ct]\cup \lbrack a+(1-\lambda )t,a+t] \\
\tilde{J} &=&[b,b+ct]\cup \lbrack b+(1-\lambda )t,b+t].
\end{eqnarray*}
\noindent
Therefore, we have that
\begin{equation*}
f(\widetilde{I},\widetilde{J})=J_{1}\cup J_{2}\cup J_{3}\cup J_{4},
\end{equation*}
\noindent
where 
\begin{eqnarray*}
J_{1} &=&[\sqrt{a}+\sqrt{b},\sqrt{a+ct}+\sqrt{b+ct}]=[h_{1,}e_{1}], \\
J_{2} &=&[\sqrt{a}+\sqrt{b+(1-\lambda )t},\sqrt{a+ct}+\sqrt{b+t}
]=[h_{2,}e_{2}], \\
J_{3} &=&[\sqrt{a+(1-\lambda )t}+\sqrt{b},\sqrt{a+t}+\sqrt{b+ct}
]=[h_{3,}e_{3}], \\
J_{4} &=&[\sqrt{a+(1-\lambda )t}+\sqrt{b+(1-\lambda )t},\sqrt{a+t}+\sqrt{b+t}
]=[h_{4,}e_{4}].
\end{eqnarray*}
\noindent
In terms of the condition $b\geq a,$ it follows that%
\begin{eqnarray*}
&&\sqrt{a+(1-\lambda )t}+\sqrt{b}-(\sqrt{a}+\sqrt{b+(1-\lambda )t}) \\
&=&\sqrt{a+t-t\lambda }-\sqrt{b+t-t\lambda }-\sqrt{a}+\sqrt{b} \\
&=&\frac{t-t\lambda }{\sqrt{a+t-t\lambda }+\sqrt{a}}-\frac{t-t\lambda }{%
\sqrt{b+t-t\lambda }+\sqrt{b}} \\
&\geq &\frac{t-t\lambda }{\sqrt{b+t-t\lambda }+\sqrt{b}}-\frac{t-t\lambda }{%
\sqrt{b+t-t\lambda }+\sqrt{b}}=0
\end{eqnarray*}
Therefore, 
in order to prove 
\begin{equation*}
f(I,J)=f(\tilde{I},\tilde{J}),
\end{equation*}
it suffices to prove that
\begin{equation*}
\left\lbrace\begin{array}{cc}
               e_1-h_2\geq 0\\
                e_2-h_3\geq0\\
                e_3-h_4 \geq0.\\
                \end{array}\right.
\end{equation*}
Case $(1)$
\begin{eqnarray*}
e_{1}-h_{2} &=&\sqrt{a+ct}+\sqrt{b+ct}-(\sqrt{a}+\sqrt{b+(1-\lambda )t}) \\
&=&\allowbreak \sqrt{a+ct}-\sqrt{a}+\sqrt{b+ct}-\sqrt{b+t-t\lambda } \\
&=&\frac{ct}{\sqrt{a+ct}+\sqrt{a}}-\frac{(1-c-\lambda )t}{\sqrt{b+t-t\lambda 
}+\sqrt{b+ct}} \\
&\geq &\frac{ct}{\sqrt{b+t-t\lambda }+\sqrt{b+ct}}-\frac{(1-c-\lambda )t}{%
\sqrt{b+t-t\lambda }+\sqrt{b+ct}}=\frac{(2c+\lambda -1)t}{\sqrt{b+t-t\lambda 
}+\sqrt{b+ct}}\geq 0
\end{eqnarray*}
Case $(2)$
\begin{eqnarray*}
e_{2}-h_{3} &=&\sqrt{a+ct}+\sqrt{b+t}-(\sqrt{a+(1-\lambda )t}+\sqrt{b}) \\
&=&\sqrt{b+t}-\sqrt{a+t-t\lambda }+\sqrt{a+ct}-\sqrt{b} \\
&=&\frac{t}{\sqrt{b+t}+\sqrt{b}}-\frac{(1-c-\lambda )t}{\sqrt{a+t-t\lambda }+%
\sqrt{a+ct}} \\
&\geq &\frac{t}{2\sqrt{b+t}}-\frac{(1-c-\lambda )t}{2\sqrt{a+ct}}\geq 0
\end{eqnarray*}
Note that
\begin{eqnarray*}
\frac{t}{2\sqrt{b+t}}-\frac{(1-c-\lambda )t}{2\sqrt{a+ct}} &\geq
&0\Leftrightarrow \frac{a+ct}{b+t}\geq (1-c-\lambda )^{2}\Leftrightarrow \\
a &\geq &(1-c-\lambda )^{2}+t(1-c-\lambda )^{2}-ct
\end{eqnarray*}
However,
$t(1-c-\lambda )^{2}-ct\leq t(1-c-\lambda )-ct=t(1-2c-\lambda )\leq 0.$%
Therefore, if
\begin{equation*}
a\geq (1-c-\lambda )^{2},
\end{equation*}
then $e_{2}-h_{3}\geq 0.$

\noindent Case $(3)$ 

\noindent If we want to prove
\begin{equation*}
e_{3}-h_{4}=\sqrt{a+t}+\sqrt{b+ct}-(\sqrt{a+(1-\lambda )t}+\sqrt{%
b+(1-\lambda )t})\geq 0
\end{equation*}
it suffices to prove
\begin{equation*}
(\sqrt{a+t}+\sqrt{b+ct})^{2}\geq (\sqrt{a+(1-\lambda )t}+\sqrt{b+(1-\lambda
)t})^{2}
\end{equation*}
 Equivalently, we need to prove
\begin{equation*}
\allowbreak t\left( c+2\lambda -1\right) +2\sqrt{(a+t)(b+ct)}\geq 2\sqrt{%
(a+(1-\lambda )t)(b+(1-\lambda )t)}
\end{equation*}

or

\begin{equation*}
\allowbreak (t\left( c+2\lambda -1\right) +2\sqrt{(a+t)(b+ct)})^{2}\geq (2%
\sqrt{(a+(1-\lambda )t)(b+(1-\lambda )t)})^{2},
\end{equation*}%
namely,
$$(t\left( c+2\lambda -1\right) )^{2}+4(a+t)(b+ct)+4t\left( c+2\lambda
-1\right) \sqrt{(a+t)(b+ct)}\geq 4(a+(1-\lambda )t)(b+(1-\lambda )t)$$
or
\begin{equation*}
\left( 4a\lambda -3t-4a+4b\lambda +4t\lambda +c^{2}t+4ac+2ct+4ct\lambda
\right) \allowbreak +4\left( c+2\lambda -1\right) \sqrt{(a+t)(b+ct)}\geq 0.
\end{equation*}
Therefore,
\begin{equation*}
(4\lambda +c^{2}+2c+4c\lambda -3)t+8\left( c+2\lambda -1\right) a\geq 0
\end{equation*}
implies 
\begin{equation*}
\left( 4a\lambda -3t-4a+4b\lambda +4t\lambda +c^{2}t+4ac+2ct+4ct\lambda
\right) \allowbreak +4\left( c+2\lambda -1\right) \sqrt{(a+t)(b+ct)}\geq 0.
\end{equation*}
\end{proof}
With the help of computer, we obtain the following lemma. 
\begin{lemma}\label{blue1}
In the blue   region (Figure 2), we have 
\begin{equation*}
\left\lbrace\begin{array}{cc}
                c-\lambda \geq (1-c-\lambda )^{2},\\
               8(c-\lambda )(2\lambda +c-1)\geq \lambda (3-4\lambda -4\lambda
c-c^{2}-2c).
                \end{array}\right.
\end{equation*}
\end{lemma}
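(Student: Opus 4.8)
The plan is to read both displayed relations as polynomial (semi-algebraic) conditions on the pair $(\lambda,c)$ and to verify them throughout the blue region using its defining inequalities. By Lemma \ref{useful} and the shading in Figure 2, the blue region sits inside the semi-algebraic set cut out by
\[
\lambda\leq c\leq 2\lambda,\quad c<1-\lambda,\quad c\geq(1-\lambda)^2,\quad 2\lambda+c-1\geq 0,\quad \lambda+2c-1\geq 0,\quad 4c+\lambda\geq 1,
\]
so the task is to show that these relations, together with the single boundary curve that delineates the blue region, force the two inequalities appearing in the statement (which are exactly the hypotheses of Lemma \ref{sqrt1} for the basic interval $I=J=[c-\lambda,c]$, i.e.\ $a=b=c-\lambda$, $t=\lambda$).

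For the first inequality I would clear the square and study
\[
P_1(\lambda,c):=(c-\lambda)-(1-c-\lambda)^2=-c^2+(3-2\lambda)c+(\lambda-1-\lambda^2)
\]
as a downward quadratic in $c$. Its discriminant is $5-8\lambda$, which is nonnegative in the relevant range $\lambda\leq\tfrac{5}{8}$, so $P_1\geq 0$ holds precisely on the band $c_-(\lambda)\leq c\leq c_+(\lambda)$ with
\[
c_\pm(\lambda)=\dfrac{(3-2\lambda)\pm\sqrt{5-8\lambda}}{2}.
\]
Since the upper root satisfies $c_+(\lambda)>1-\lambda$ throughout the range while the region obeys $c<1-\lambda$, the first inequality reduces to the lower bound $c\geq c_-(\lambda)$. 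It then remains only to check that the lower boundary of the blue region lies above the curve $c=c_-(\lambda)$, and here I would invoke the defining relation $\sqrt{c}+1\geq 2\sqrt{1-\lambda}$ of the region, rationalized into a polynomial comparison, to conclude.

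For the second inequality I would first note that in the blue region both factors of the left-hand side are nonnegative, because $c-\lambda\geq 0$ (from $c\geq\lambda$) and $2\lambda+c-1\geq 0$ (Lemma \ref{useful}); hence $8(c-\lambda)(2\lambda+c-1)\geq 0$. I would then split on the sign of $Q(\lambda,c):=3-4\lambda-4\lambda c-c^2-2c$. When $Q\leq 0$ the right-hand side $\lambda Q$ is nonpositive and the inequality is immediate. The surviving case $Q>0$ is the genuine content: moving everything to one side turns the claim into the bivariate cubic $8(c-\lambda)(2\lambda+c-1)-\lambda Q\geq 0$, which does not factor. I would establish its nonnegativity either by producing a Positivstellensatz-type certificate, expressing it as a sum of products of the region's defining inequalities with manifestly nonnegative polynomial coefficients, or equivalently by a cylindrical algebraic decomposition of the compact blue region. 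This sign-indefinite cubic on $\{Q>0\}$ is the main obstacle, and it is exactly the step that the phrase ``with the help of computer'' is meant to discharge.
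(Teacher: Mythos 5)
There is a genuine gap, and it stems from a misreading of what the lemma is actually asserting. In the paper the blue region of Figure~2 is \emph{defined} by the five inequalities listed at the start of the proof of Theorem~\ref{blue}, two of which are exactly the inequalities of Lemma~\ref{blue1}; the lemma is therefore essentially true by construction, the phrase ``with the help of computer'' referring only to plotting the region (the genuinely computational claim is the later one, that the blue region of Figure~2 together with the orange region of Figure~3 covers the red region of Figure~1). Your proposal instead tries to \emph{derive} the two inequalities from the outer constraints $\lambda\leq c\leq 2\lambda$, $c<1-\lambda$, $\sqrt{c}+1\geq 2\sqrt{1-\lambda}$ and their consequences from Lemma~\ref{useful}. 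That cannot work: the inequalities of Lemma~\ref{blue1} fail on part of the red region, which is precisely why the paper needs the separate orange-region argument of Theorem~\ref{orange}. Concretely, take $(\lambda,c)=(0.4,0.4)$: all the red-region conditions hold ($\sqrt{0.4}+1\approx 1.632\geq 2\sqrt{0.6}\approx 1.549$), yet $c-\lambda=0<(1-c-\lambda)^2=0.04$. Your reduction of the first inequality to $c\geq c_-(\lambda)$ with $c_\pm(\lambda)=\bigl((3-2\lambda)\pm\sqrt{5-8\lambda}\bigr)/2$ is correct algebra, but the concluding step ``invoke $\sqrt{c}+1\geq 2\sqrt{1-\lambda}$, rationalized, to conclude'' is exactly the step that this counterexample kills, since $c_-(0.4)\approx 0.429>0.4$.

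The second half of your argument has the same structural problem: you propose to certify $8(c-\lambda)(2\lambda+c-1)-\lambda(3-4\lambda-4\lambda c-c^2-2c)\geq 0$ over a region you have not actually identified (you describe it only as the red-region constraints ``together with the single boundary curve that delineates the blue region''). If that boundary curve is taken to be the vanishing locus of this very cubic, the argument is circular; if it is not, the inequality is false on part of the domain (again $c$ near $\lambda$ makes the left factor vanish while the right-hand side can be positive). What is needed here is not a Positivstellensatz certificate but a correct statement of what is being verified: either accept, as the paper implicitly does, that Lemma~\ref{blue1} records the definition of the blue region, or else reformulate and prove the substantive covering claim that every $(\lambda,c)$ in the red region lies in the blue region of Figure~2 or in the orange region of Figure~3.
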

\begin{figure}
  \centering
  \includegraphics[width=250pt]{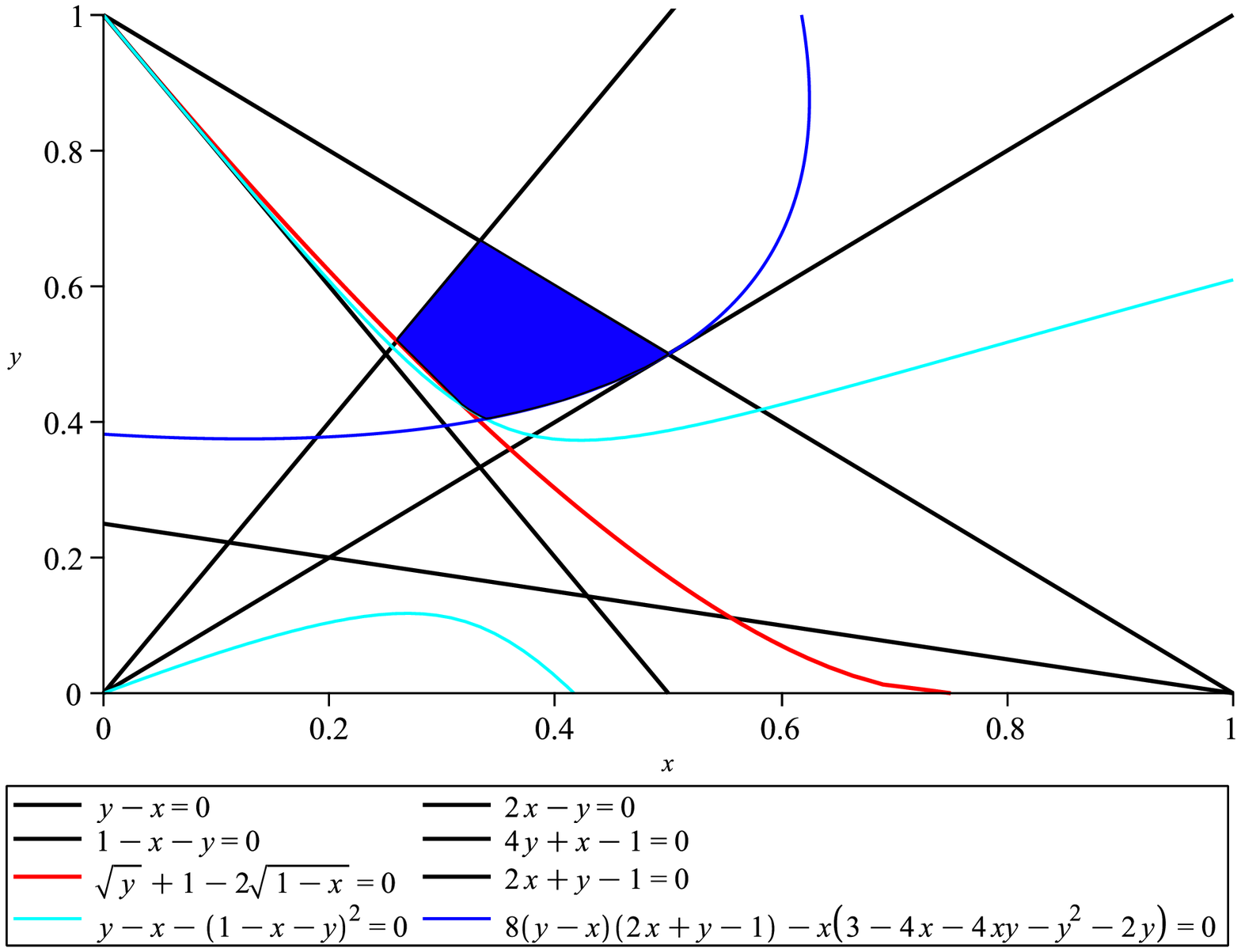}
   \caption{ }
\end{figure}
\begin{theorem}\label{blue}
In the blue region (Figure 2), we have $\sqrt{K}+\sqrt{K}=[0,2]$.
\end{theorem}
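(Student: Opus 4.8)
The plan is to reduce the inclusion $\sqrt{K}+\sqrt{K}\supseteq[0,2]$ to a finite computation via the renormalization principle of Lemma \ref{lem}, and then to recover all of $[0,2]$ by scaling. Throughout write $F(x,y)=\sqrt{x}+\sqrt{y}$ and work on the base region $[A,B]=[M,N]=[c-\lambda,1]$, whose endpoints are the left endpoint of the rank-$1$ interval $f_2(I)=[c-\lambda,c]$ and the right endpoint of $f_3(I)=[1-\lambda,1]$. With $k_0=1$ one has $G_1=G_1'=[c-\lambda,c]\cup[1-\lambda,1]$, since the third rank-$1$ interval $[0,\lambda]$ is not contained in $[c-\lambda,1]$. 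The reverse inclusion $\sqrt{K}+\sqrt{K}\subseteq[0,2]$ is immediate from $K\subseteq[0,1]$.

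First I would check that Lemma \ref{lem} applies, i.e.\ that $F(I,J)=F(\tilde I,\tilde J)$ for every pair of rank-$n$ basic intervals $I\subseteq G_n$, $J\subseteq G_n'$ with $n\geq1$. Each such interval has length $t=\lambda^n\leq\lambda$ and left endpoint at least $c-\lambda$; by symmetry of $F$ I may take the smaller left endpoint to be $a\geq c-\lambda$. Lemma \ref{sqrt1} gives the identity once $a\geq(1-c-\lambda)^2$ and $8a(2\lambda+c-1)\geq t(3-4\lambda-4\lambda c-c^2-2c)$. The first holds because $a\geq c-\lambda\geq(1-c-\lambda)^2$ (first estimate of Lemma \ref{blue1}). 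For the second I argue by monotonicity: $2\lambda+c-1\geq0$ in the blue region (Lemma \ref{useful}), so the left side is non-decreasing in $a$ and minimized at $a=c-\lambda$; if $3-4\lambda-4\lambda c-c^2-2c\geq0$ the right side is non-increasing in $t$, so the worst case is $a=c-\lambda$, $t=\lambda$, where the inequality is exactly the second estimate of Lemma \ref{blue1}, while if $3-4\lambda-4\lambda c-c^2-2c<0$ the right side is negative and the inequality is trivial. Hence Lemma \ref{lem} yields $F(K\cap[c-\lambda,1],K\cap[c-\lambda,1])=F(G_1,G_1')$.

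Next I would compute the base image explicitly:
\begin{equation*}
F(G_1,G_1')=[2\sqrt{c-\lambda},\,2\sqrt c]\cup[\sqrt{c-\lambda}+\sqrt{1-\lambda},\,\sqrt c+1]\cup[2\sqrt{1-\lambda},\,2],
\end{equation*}
the middle interval arising from the cross term. To see this union is the single interval $[2\sqrt{c-\lambda},2]$ I must verify two overlaps: $(\mathrm{i})$ $2\sqrt c\geq\sqrt{c-\lambda}+\sqrt{1-\lambda}$ and $(\mathrm{ii})$ $\sqrt c+1\geq2\sqrt{1-\lambda}$. Inequality $(\mathrm{ii})$ is precisely the hypothesis defining the red region, which contains the blue region. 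Inequality $(\mathrm{i})$ is the delicate one: squaring twice (both sides non-negative, the first because $3c+2\lambda-1\geq2c\geq0$) reduces it to $9c^2+(16\lambda-10)c+1\geq0$. Viewing the left side as a quadratic in $c$, its discriminant equals $64(4\lambda-1)(\lambda-1)$; since the constraints $c\geq(1-\lambda)^2$ and $c\leq2\lambda$ force $(1-\lambda)^2\leq2\lambda$, i.e.\ $\lambda\geq2-\sqrt3>1/4$, this discriminant is negative and the quadratic is everywhere positive. Thus $(\mathrm{i})$ holds automatically and $F(G_1,G_1')=[2\sqrt{c-\lambda},2]$.

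Finally I would fill in $[0,2]$ by scaling. Since $f_1(K)=\lambda K\subseteq K$, iteration gives $\lambda^k K\subseteq K$, hence $\lambda^{k/2}\sqrt{K}\subseteq\sqrt{K}$ and $\sqrt{K}+\sqrt{K}\supseteq\lambda^{k/2}(\sqrt{K}+\sqrt{K})\supseteq[2\lambda^{k/2}\sqrt{c-\lambda},\,2\lambda^{k/2}]$ for every $k\geq0$. Because $c\leq2\lambda$ gives $\sqrt{c-\lambda}\leq\sqrt\lambda$, consecutive scaled copies overlap ($2\lambda^{(k+1)/2}\geq2\lambda^{k/2}\sqrt{c-\lambda}$), so their union together with $0\in\sqrt{K}+\sqrt{K}$ exhausts $[0,2]$. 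The step I expect to require the most care is the verification of overlap $(\mathrm{i})$ via the admissibility bound $\lambda\geq2-\sqrt3$; the monotone propagation of the hypotheses of Lemma \ref{sqrt1} and the scaling argument parallel the treatment of $K+K$ and $K/K$ in Section 2.
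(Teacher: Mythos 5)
Your proposal is correct and follows essentially the same route as the paper: the same choice $I=J=[c-\lambda,c]\cup[1-\lambda,1]$, the same appeal to Lemmas \ref{lem}, \ref{sqrt1} and \ref{blue1}, the same three-interval decomposition of $F(I,J)$ with the two overlap checks, and the same $(\sqrt{\lambda})^k$-scaling to exhaust $[0,2]$. The only (harmless) deviations are that you justify the reduction to the worst case $a=c-\lambda$, $t=\lambda$ by an explicit monotonicity argument that the paper leaves implicit, and you verify $2\sqrt{c}\geq\sqrt{c-\lambda}+\sqrt{1-\lambda}$ by a discriminant computation where the paper compares difference quotients using $c+2\lambda-1\geq 0$.
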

\begin{proof}
Note that the blue region is generated by the following inequalities
\begin{equation*}
\left\lbrace\begin{array}{cc}
\lambda+ c\leq 1\\
\lambda\leq c\leq 2\lambda\\
1+\sqrt{c}-2\sqrt{1-\lambda }\geq 0\\
                c-\lambda \geq (1-c-\lambda )^{2},\\
               8(c-\lambda )(2\lambda +c-1)\geq \lambda (3-4\lambda -4\lambda
c-c^{2}-2c).
                \end{array}\right.
\end{equation*}
In this blue region, we have that $c+2\lambda -1\geq 0$ (Lemma \ref{useful}).
Therefore, by Lemmas \ref{blue1}, \ref{sqrt1} and \ref{lem}, we let $$I=J=[c-\lambda ,c]\cup \lbrack 1-\lambda ,1],$$
and therefore, 
$$\sqrt{K}+\sqrt{K}\supset f(I,J)=[2\sqrt{c-\lambda },2\sqrt{c}]\cup
\lbrack \sqrt{1-\lambda }+\sqrt{c-\lambda },1+\sqrt{c}]\cup \lbrack
2\sqrt{1-\lambda },2].$$

\noindent Note that 
\begin{eqnarray*}
2\sqrt{c}-(\sqrt{1-\lambda }+\sqrt{c-\lambda }) &=&\frac{\lambda }{\sqrt{c}+\sqrt{c-\lambda }}+\frac{c+\lambda -1}{\sqrt{c}+
\sqrt{1-\lambda }}\\ &\geq &\frac{c+\lambda -1+\lambda }{\sqrt{c}+\sqrt{
c-\lambda }}\geq 0,
\end{eqnarray*}
and 
$$1+\sqrt{c}-2\sqrt{1-\lambda }\geq 0.$$
Therefore, 
$\sqrt{K}+\sqrt{K}\supset f(I,J)=[2\sqrt{c-\lambda },2]$, and 
$$[0,2]\subset \{0\}\cup  \cup_{k=0}^{\infty}(\sqrt{\lambda})^k[2\sqrt{c-\lambda },2]\subset \sqrt{K}+\sqrt{K}\subset [0,2],$$
where we use the inequality $2\sqrt{\lambda}>2\sqrt{c-\lambda}$ ($c\leq 2\lambda$). 
\end{proof}
Therefore, we only need to prove that  if $(\lambda, c)$ is in the  orange region of Figure 3, then $$\sqrt{K}+\sqrt{K}=[0,2].$$
\begin{lemma}\label{y1}
In the  orange  region (Figure 3), we have 
\begin{equation*}
\left\lbrace\begin{array}{cc}
                \lambda -\lambda ^{2}\geq (1-c-\lambda
)^{2},\\
              8(\lambda -\lambda ^{2})(2\lambda +c-1)\geq \lambda ^{2}(3-4\lambda
-4\lambda c-c^{2}-2c)
                \end{array}\right.
\end{equation*}
\end{lemma}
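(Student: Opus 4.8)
The plan is to recognize that the two inequalities of Lemma \ref{y1} are nothing but the hypotheses of Lemma \ref{sqrt1} specialized to one concrete basic interval. Taking $a=\lambda-\lambda^{2}$ and $t=\lambda^{2}$, which are the left endpoint and length of the rank-two basic interval $f_{1}\circ f_{3}(I)=[\lambda-\lambda^{2},\lambda]$, the requirement $a\geq(1-c-\lambda)^{2}$ becomes $\lambda-\lambda^{2}\geq(1-c-\lambda)^{2}$, and the requirement $8a(2\lambda+c-1)\geq t(3-4\lambda-4\lambda c-c^{2}-2c)$ becomes the second inequality. Thus Lemma \ref{y1} asserts exactly that Lemma \ref{sqrt1} applies to $I=J=[\lambda-\lambda^{2},\lambda]$ throughout the orange region, in complete analogy with the way Lemma \ref{blue1} made Lemma \ref{sqrt1} applicable to the rank-one interval $[c-\lambda,c]$ on the blue region. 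So it suffices to verify these two scalar inequalities on the orange region, i.e. on the part of the red region $\{\lambda\leq c\leq 2\lambda,\ c+\lambda\leq 1,\ 1+\sqrt{c}\geq 2\sqrt{1-\lambda}\}$ not already covered by the blue region.

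The first inequality I would dispatch using only Lemma \ref{useful}, which gives $c\geq(1-\lambda)^{2}$ on the whole region. This forces $0\leq 1-c-\lambda\leq 1-(1-\lambda)^{2}-\lambda=\lambda-\lambda^{2}$, and since $\lambda-\lambda^{2}\leq\tfrac14<1$, squaring the chain $0\leq 1-c-\lambda\leq\lambda-\lambda^{2}$ yields $(1-c-\lambda)^{2}\leq(\lambda-\lambda^{2})^{2}\leq\lambda-\lambda^{2}$. No finer information about the orange region is needed here. The second inequality is the real content: dividing by $\lambda>0$ and expanding reduces it to the polynomial inequality
\begin{equation*}
P(\lambda,c)=21\lambda+8c-8-12\lambda^{2}-6\lambda c+4\lambda^{2}c+\lambda c^{2}\geq 0 .
\end{equation*}
The only non-polynomial feature of the region is the curved boundary $1+\sqrt{c}=2\sqrt{1-\lambda}$, which I would rationalize by squaring (legitimate since both sides are nonnegative), or equivalently by parametrizing the curve through $s=\sqrt{c}$ as $\lambda=(3-2s-s^{2})/4$. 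With polynomial boundaries the region becomes a compact semialgebraic set, and since $P$ has no interior critical point producing a negative value its minimum is attained on an edge; checking the edges $c=\lambda$, $c=2\lambda$, and the parametrized curve reduces each case to a one-variable polynomial inequality, which is the computer-assisted step mirroring the proof of Lemma \ref{blue1}.

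The hard part will be the second inequality, because the margin is thin: the minimum of $P$ over the orange region is small and positive (numerically near $0.2$, attained close to the corner where $c=2\lambda$ meets $1+\sqrt{c}=2\sqrt{1-\lambda}$, i.e. $(\lambda,c)\approx(0.26,0.52)$), so crude estimates will not close the gap. The decisive calculation is on the edge $c=2\lambda$, where $P=12\lambda^{3}-24\lambda^{2}+37\lambda-8$; I would show this cubic is increasing in $\lambda$, since its derivative $36\lambda^{2}-48\lambda+37$ has negative discriminant and positive leading coefficient, so the edge minimum occurs at the smallest admissible $\lambda$ (the corner above), where positivity is then confirmed directly. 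Once the square-root boundary has been rationalized, the other two edges are handled by the same monotonicity argument and present no obstacle, completing the verification.
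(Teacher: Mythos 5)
Your proposal is correct, and it is genuinely more of a proof than the paper gives: the paper's entire argument for Lemma \ref{y1} is the one-line assertion that the points of the (computer-generated) orange region of Figure 3 satisfy the two inequalities, i.e.\ the claim is read off from the picture, exactly as for Lemma \ref{blue1}. You correctly identify the inequalities as the hypotheses of Lemma \ref{sqrt1} for $a=\lambda-\lambda^{2}$, $t=\lambda^{2}$ (the rank-two interval $f_{1}f_{3}(I)$ used in Lemma \ref{gap6}), and your treatment of the first one is a clean analytic argument the paper never states: $c\geq(1-\lambda)^{2}$ gives $0\leq 1-c-\lambda\leq\lambda-\lambda^{2}\leq\tfrac14$, hence $(1-c-\lambda)^{2}\leq(\lambda-\lambda^{2})^{2}\leq\lambda-\lambda^{2}$. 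Your reduction of the second inequality to $P(\lambda,c)=21\lambda+8c-8-12\lambda^{2}-6\lambda c+4\lambda^{2}c+\lambda c^{2}\geq 0$ is algebraically right, and the decisive edge computation checks out: on $c=2\lambda$ one gets $12\lambda^{3}-24\lambda^{2}+37\lambda-8$ with derivative $36\lambda^{2}-48\lambda+37$ of negative discriminant, and the value at the corner $36\lambda^{2}-44\lambda+9=0$, $\lambda\approx 0.26$, is about $0.2>0$. Two small tightenings would make the plan airtight: (i) the red region has a fourth edge $c+\lambda=1$, which you omit from your list, though there $P=\lambda(8-4\lambda-3\lambda^{2})>0$ trivially; (ii) rather than appealing vaguely to the absence of bad interior critical points, note that $\partial P/\partial c=8-6\lambda+4\lambda^{2}+2\lambda c>0$ throughout $0<\lambda\leq\tfrac12$, so $P$ has no interior critical points at all and its minimum lies on the lower boundary ($c=\lambda$ and the rationalized curve $1+\sqrt{c}=2\sqrt{1-\lambda}$), consistent with your located minimum at the pinch corner. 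What your route buys is a hand-checkable, figure-free proof valid on the whole red region (hence a fortiori on the orange one); what the paper's route buys is brevity at the cost of resting the lemma entirely on a plotted region.
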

\begin{figure}
  \centering
  \includegraphics[width=300pt]{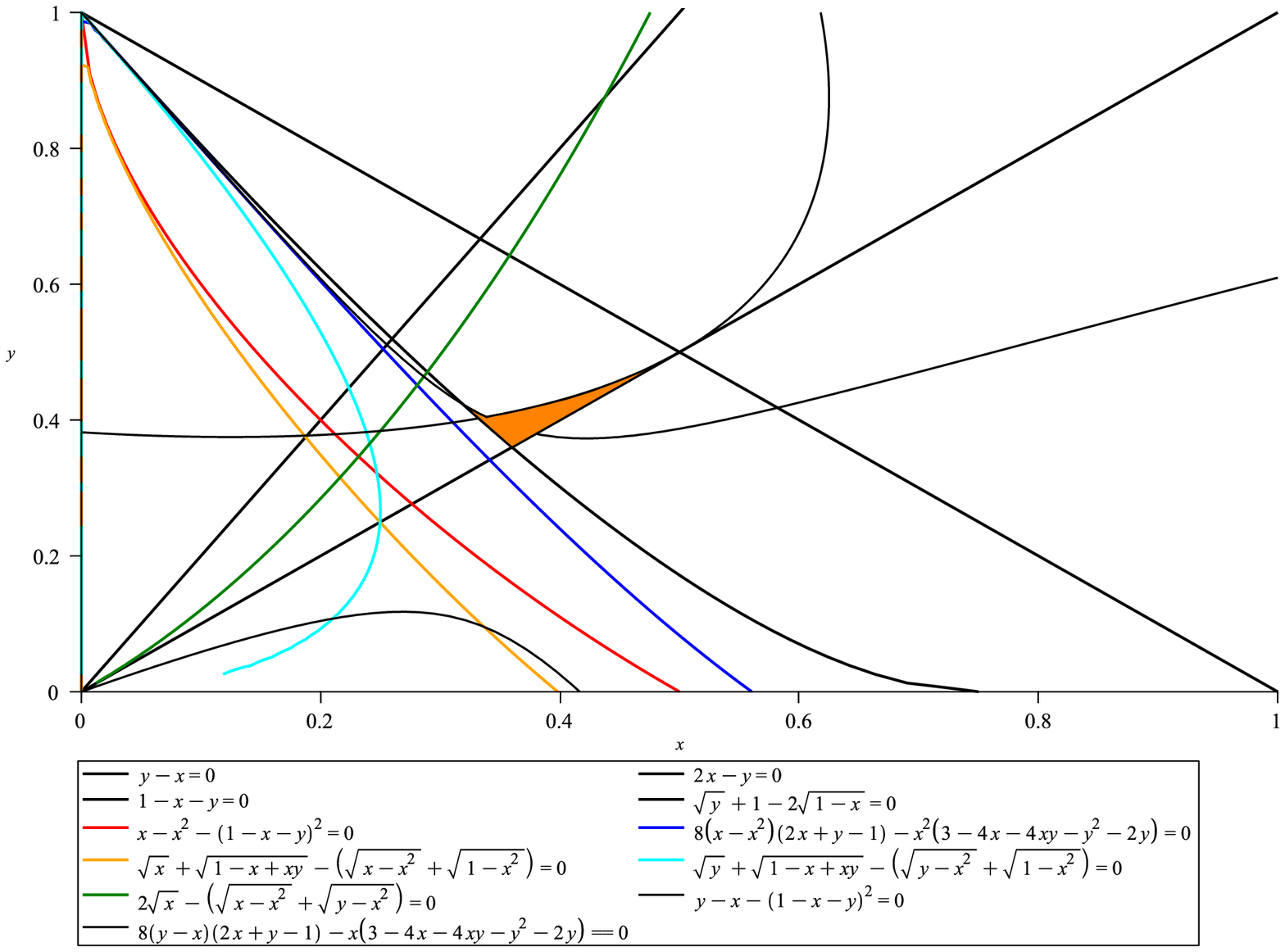}
   \caption{ }
\end{figure}
\begin{proof}
Note that  the points in the orange region of Figure 3 satisfy
\begin{equation*}
\left\lbrace\begin{array}{cc}
                x -x ^{2}\geq (1-y-x
)^{2},\\
              8(x -x^{2})(2x +y-1)\geq x^{2}(3-4x
-4x y-y^{2}-2y).
                \end{array}\right.
\end{equation*}
\end{proof}
\begin{theorem}\label{orange}
Suppose that  $(\lambda, c)$ is in the orange region of Figure 3. Then 
$$\sqrt{K}+\sqrt{K}=[0,2].$$
\end{theorem}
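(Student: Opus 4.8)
The plan is to imitate the proof of Theorem \ref{blue}, but to run the reduction at rank $2$ rather than rank $1$. In the orange region the first-rank interval $[c-\lambda,c]$ has a left endpoint too close to the origin: the estimates in Lemma \ref{sqrt1} involve differences of square roots of the form $\sqrt{a+ct}-\sqrt{a}=ct/(\sqrt{a+ct}+\sqrt a)$, which degenerate as $a\to 0$, so the self-refinement identity $f(I,J)=f(\widetilde I,\widetilde J)$ need not hold for $[c-\lambda,c]$. The natural fundamental interval is instead $[A,B]=[M,N]=[\lambda-\lambda^2,1]$, whose endpoints are the left endpoint of the rank-$2$ basic interval $f_1\circ f_3(I)=[\lambda-\lambda^2,\lambda]$ and the right endpoint of $f_3\circ f_3(I)=[1-\lambda^2,1]$, so that Lemma \ref{lem} is applicable with $k_0=2$. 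The key observation is that Lemma \ref{y1} is exactly the pair of hypotheses of Lemma \ref{sqrt1} evaluated at $a=\lambda-\lambda^2$, $t=\lambda^2$, i.e.\ for the leftmost rank-$2$ interval $[\lambda-\lambda^2,\lambda]$.

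First I would check the hypotheses of Lemma \ref{sqrt1} for every pair of basic intervals of rank $\ge 2$ contained in $[\lambda-\lambda^2,1]$. The bound $a\ge(1-c-\lambda)^2$ is immediate, since every such interval has left endpoint at least $\lambda-\lambda^2$ and Lemma \ref{y1} gives $\lambda-\lambda^2\ge(1-c-\lambda)^2$. For the second inequality $8a(2\lambda+c-1)\ge t(3-4\lambda-4\lambda c-c^2-2c)$ I would argue by monotonicity: by Lemma \ref{useful} we have $2\lambda+c-1\ge0$, so the left-hand side is nondecreasing in $a$, while the right-hand side is nondecreasing in $t$ when the bracket is nonnegative (and the inequality is trivial when the bracket is negative). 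Since any basic interval of rank $\ge 2$ in the region has $a\ge\lambda-\lambda^2$ and $t\le\lambda^2$, the inequality follows from the single instance recorded in Lemma \ref{y1}. With $f(I,J)=f(\widetilde I,\widetilde J)$ in hand for all such pairs, Lemma \ref{lem} yields
\[
\sqrt{K}+\sqrt{K}\supset f\big(K\cap[\lambda-\lambda^2,1],\,K\cap[\lambda-\lambda^2,1]\big)=f(G_2,G_2),
\]
where $f(x,y)=\sqrt{x}+\sqrt{y}$ and $G_2$ is the union of the rank-$2$ basic intervals lying in $[\lambda-\lambda^2,1]$.

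The step I expect to be the genuine obstacle is showing that $f(G_2,G_2)$ is the single interval $[2\sqrt{\lambda-\lambda^2},2]$. This is the analog of the two endpoint inequalities in Theorem \ref{blue}, but $G_2$ now contains several rank-$2$ pieces (certainly $f_1f_3$, $f_2f_3$, $f_3f_1$, $f_3f_3$, together with whichever of $f_2f_1,f_2f_2,f_3f_2$ have left endpoint $\ge\lambda-\lambda^2$), so $f(G_2,G_2)$ is a priori a union of many closed intervals and one must verify that consecutive pieces overlap with no gap. I would order the constituent intervals by their left endpoints and check the adjacent-overlap inequalities one at a time, in the same manner as Lemmas \ref{case1} and \ref{case2}, expecting a short case split (as in the blue region) according to the relative order of a few endpoints; the sign conditions $2\lambda+c-1\ge0$, $\lambda+2c-1\ge0$ and $4c+\lambda\ge1$ from Lemma \ref{useful} should supply each required inequality. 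The left end of the union is $\sqrt{\lambda-\lambda^2}+\sqrt{\lambda-\lambda^2}=2\sqrt{\lambda-\lambda^2}$, coming from $f(f_1f_3,f_1f_3)$, and the right end is $\sqrt{1}+\sqrt{1}=2$, coming from $f(f_3f_3,f_3f_3)$.

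Finally I would close by the same self-similar filling used in Theorem \ref{blue}. Since $f_1(K)=\lambda K\subset K$, we have $\sqrt{\lambda}\,(\sqrt{K}+\sqrt{K})\subset\sqrt{K}+\sqrt{K}$, so each scaled copy $(\sqrt{\lambda})^k[2\sqrt{\lambda-\lambda^2},2]$ lies in $\sqrt{K}+\sqrt{K}$. Because $2\sqrt{\lambda}>2\sqrt{\lambda-\lambda^2}$ (equivalently $\lambda^2>0$), consecutive scaled copies overlap and shrink to $0$, whence
\[
[0,2]=\{0\}\cup\bigcup_{k=0}^{\infty}(\sqrt{\lambda})^k\big[2\sqrt{\lambda-\lambda^2},\,2\big]\subset\sqrt{K}+\sqrt{K}\subset[0,2],
\]
which gives $\sqrt{K}+\sqrt{K}=[0,2]$ and completes the proof.
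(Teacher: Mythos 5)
Your proposal breaks at exactly the step you flagged as the ``genuine obstacle'': the union $f(G_2,G_2)$ is \emph{not} a single interval on all of the orange region, and no amount of adjacent-overlap checking will make it one, because there is a real gap. Every rank-$2$ basic interval contained in $[\lambda-\lambda^2,1]$ lies either in $[\lambda-\lambda^2,c]$ or in $[1-\lambda,1]$. Hence any pair drawn from $G_2$ gives a sum $\sqrt{x}+\sqrt{y}\le 2\sqrt{c}$ if both coordinates lie below $c$, and a sum $\ge\sqrt{\lambda-\lambda^2}+\sqrt{1-\lambda}$ if at least one coordinate lies in $[1-\lambda,1]$. So $f(G_2,G_2)$ misses the interval $\bigl(2\sqrt{c},\,\sqrt{\lambda-\lambda^2}+\sqrt{1-\lambda}\bigr)$ whenever that interval is nonempty, and it is nonempty at points of the orange region: take $(\lambda,c)=(0.36,0.4)$, which satisfies $\sqrt{c}+1=1.6325\ge 2\sqrt{1-\lambda}=1.6$ but $c-\lambda=0.04<(1-c-\lambda)^2=0.0576$ (so it is outside the blue region), and there $2\sqrt{c}\approx 1.2649$ while $\sqrt{\lambda-\lambda^2}+\sqrt{1-\lambda}=0.48+0.8=1.28$. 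This is precisely the gap between $H_2$ (or $H_5$) and $H_3$ that the paper isolates in Remark \ref{remark}.

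The deeper problem is that your choice of fundamental domain $[\lambda-\lambda^2,1]$ discards exactly the points of $K$ needed to bridge this gap. The paper covers it in Lemmas \ref{gap1} and \ref{gap5} by pairing the rank-$3$ interval $I=[\lambda c-\lambda^3,\lambda c]=f_1f_2f_3(H)$ --- which lies entirely to the left of $\lambda-\lambda^2$, since $\lambda c<\lambda(1-\lambda)$ --- with rank-$3$ subintervals of $[1-\lambda,1-\lambda+\lambda c]$; the passage to rank $3$ is forced because the rank-$2$ interval $f_1f_2(H)=[\lambda c-\lambda^2,\lambda c]$ has left endpoint below $(1-c-\lambda)^2$ and so violates the hypothesis of Lemma \ref{sqrt1}. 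A second, analogous gap $[\sqrt{\lambda}+1,\sqrt{c-\lambda^2}+\sqrt{1-\lambda}]$ can also occur and is handled by Lemmas \ref{gap2} and \ref{gap3}. Your monotonicity verification of the hypotheses of Lemma \ref{sqrt1} on $[\lambda-\lambda^2,1]$ and your final scaling argument are both fine, but without the gap-covering step the inclusion $\sqrt{K}+\sqrt{K}\supset[2\sqrt{\lambda-\lambda^2},2]$ is simply not established.
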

The proof of this result is a little complicated. We outline the proof. First, we prove Lemma \ref{gap6}, namely 
$$\sqrt{K}+\sqrt{K}\supset [\sqrt{\lambda-\lambda^2}+\sqrt{c-\lambda^2},\sqrt{\lambda}+\sqrt{c}]\cup \lbrack \sqrt{\lambda -\lambda ^{2}}+\sqrt{1-\lambda },\sqrt{\lambda }+1] \cup \lbrack \sqrt{c-\lambda
^{2}}+\sqrt{1-\lambda },2].$$
There are still two gaps, i.e.
$$[\sqrt{\lambda}+\sqrt{c}, \sqrt{\lambda-\lambda^2}+\sqrt{1-\lambda}], [\sqrt{\lambda}+1, \sqrt{c-\lambda^2}+\sqrt{1-\lambda}].$$  We need to find other intervals,  which are subsets of $\sqrt{K}+\sqrt{K}$, such that these intervals  cover the above gaps, see Lemmas \ref{gap5} and \ref{gap3}. Therefore, 
$$\sqrt{K}+\sqrt{K}\supset [\sqrt{\lambda-\lambda^2}+\sqrt{c-\lambda^2},2],$$
and finally by Remark \ref{remark}, we prove Theorem \ref{orange}.
\begin{lemma}\label{gap6}
Suppose that  $(\lambda, c)$ is in the orange region of Figure 3. 
Then $$\sqrt{K}+\sqrt{K}\supset [\sqrt{\lambda-\lambda^2}+\sqrt{c-\lambda^2},\sqrt{\lambda}+\sqrt{c}]\cup \lbrack \sqrt{\lambda -\lambda ^{2}}+\sqrt{1-\lambda },\sqrt{\lambda }+1] \cup \lbrack \sqrt{c-\lambda
^{2}}+\sqrt{1-\lambda },2].$$
\end{lemma}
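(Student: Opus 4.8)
The plan is to realize each of the three intervals in the statement as the image $f(K\cap A,\,K\cap B)$ of the map $f(x,y)=\sqrt{x}+\sqrt{y}$ over a pair of sub-pieces of $K$, and then to invoke Lemma \ref{lem} to collapse every such image to a genuine closed interval. First I would record the basic pieces that produce the endpoints: since $f_1f_3(I)=[\lambda-\lambda^2,\lambda]$, $f_2f_3(I)=[c-\lambda^2,c]$ and $f_3(I)=[1-\lambda,1]$ are clean basic intervals, we have $K\cap[\lambda-\lambda^2,\lambda]=f_1f_3(K)$, $K\cap[c-\lambda^2,c]=f_2f_3(K)$ and $K\cap[1-\lambda,1]=f_3(K)$. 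The three target intervals then arise, respectively, as $f\big(f_1f_3(K),f_2f_3(K)\big)$, as $f\big(f_1f_3(K),f_3(K)\big)$, and as the union $f\big(f_2f_3(K),f_3(K)\big)\cup f\big(f_3(K),f_3(K)\big)$.

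To evaluate each image I would apply Lemma \ref{lem} with $F=f$ on an open $U\subset(0,\infty)^2$ containing all the relevant products of intervals (there $\sqrt{\phantom{x}}$ is smooth). Its hypothesis is that $f(I,J)=f(\tilde I,\tilde J)$ for every pair of equal-rank basic subintervals $I,J$ of the two factors, and this is exactly the content of Lemma \ref{sqrt1} once its inequalities $b\ge a\ge(1-c-\lambda)^2$ and $8a(2\lambda+c-1)\ge t\,(3-4\lambda-4\lambda c-c^2-2c)$ hold; here I use the symmetry $f(x,y)=f(y,x)$ to always label the interval with the smaller left endpoint as $I=[a,a+t]$. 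Every factor lies in $[\lambda-\lambda^2,1]$, the smallest left endpoint occurring in any factor is $\lambda-\lambda^2$, and the longest basic subinterval has length $\lambda^2$ (we treat the rank-one piece $[1-\lambda,1]$ through its three rank-two subintervals). Hence the extreme case of Lemma \ref{sqrt1} is $a=\lambda-\lambda^2$, $t=\lambda^2$, for which the two inequalities become precisely $\lambda-\lambda^2\ge(1-c-\lambda)^2$ and $8(\lambda-\lambda^2)(2\lambda+c-1)\ge\lambda^2(3-4\lambda-4\lambda c-c^2-2c)$, i.e. exactly the two estimates furnished by Lemma \ref{y1} in the orange region.

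Granting this, Lemma \ref{lem} yields $f(f_1f_3(K),f_2f_3(K))=f([\lambda-\lambda^2,\lambda],[c-\lambda^2,c])=[\sqrt{\lambda-\lambda^2}+\sqrt{c-\lambda^2},\sqrt{\lambda}+\sqrt{c}]$, and in the same way $f(f_1f_3(K),f_3(K))=[\sqrt{\lambda-\lambda^2}+\sqrt{1-\lambda},\sqrt{\lambda}+1]$, $f(f_2f_3(K),f_3(K))=[\sqrt{c-\lambda^2}+\sqrt{1-\lambda},\sqrt{c}+1]$ and $f(f_3(K),f_3(K))=[2\sqrt{1-\lambda},2]$. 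The first two are the first two intervals in the statement. For the third I would merge the last two images: since $(\lambda,c)$ lies in the red region we have $\sqrt{c}+1\ge2\sqrt{1-\lambda}$, so $[\sqrt{c-\lambda^2}+\sqrt{1-\lambda},\sqrt{c}+1]$ and $[2\sqrt{1-\lambda},2]$ overlap and their union is $[\sqrt{c-\lambda^2}+\sqrt{1-\lambda},2]$. As each factor $K\cap[\cdots]$ is contained in $K$, all four images lie in $\sqrt{K}+\sqrt{K}$, which gives the asserted inclusion.

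The main obstacle is the reduction in the second paragraph: verifying that, among all equal-rank pairs produced by the recursion of Lemma \ref{lem}, the inequalities of Lemma \ref{sqrt1} are hardest at the top level $a=\lambda-\lambda^2$, $t=\lambda^2$. This rests on two monotonicity observations: the minimal left endpoint of a factor never drops below $\lambda-\lambda^2$ under subdivision, and $t=\lambda^n$ shrinks geometrically while $2\lambda+c-1\ge0$ (Lemma \ref{useful}), so enlarging $a$ or shrinking $t$ only relaxes both inequalities. The one point needing genuine care is the sign of $3-4\lambda-4\lambda c-c^2-2c$: if it is negative the second inequality of Lemma \ref{sqrt1} is automatic at every rank, while if it is nonnegative the right-hand side only decreases with $n$, so in either case the rank-two estimate of Lemma \ref{y1} is the binding one. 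Everything beyond that is a direct appeal to Lemmas \ref{y1}, \ref{sqrt1}, \ref{lem} and \ref{useful}.
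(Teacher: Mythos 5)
Your overall strategy is the paper's: realize each target interval as $f(K\cap A,\,K\cap B)$ for suitable basic pieces, verify the stability hypothesis via Lemma \ref{sqrt1} (with the worst case $a=\lambda-\lambda^2$, $t=\lambda^2$ being exactly the pair of inequalities in Lemma \ref{y1}), and collapse with Lemma \ref{lem}; the paper simply takes $I=J$ equal to the union of the four rank-two pieces at once and then sorts the resulting intervals $H_1,\dots,H_{10}$. Your monotonicity discussion of why the rank-two instance of Lemma \ref{sqrt1} is the binding one is a welcome addition that the paper leaves implicit.

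There is, however, a genuine gap where you claim that $f\bigl(f_1f_3(K),f_3(K)\bigr)$ and $f\bigl(f_2f_3(K),f_3(K)\bigr)$ equal the full intervals $[\sqrt{\lambda-\lambda^2}+\sqrt{1-\lambda},\sqrt{\lambda}+1]$ and $[\sqrt{c-\lambda^2}+\sqrt{1-\lambda},\sqrt{c}+1]$ ``in the same way'' as the first image. Lemma \ref{lem} only yields $F(K\cap[A,B],K\cap[M,N])=F(G_{k_0},G_{k_0}')$, and since $\lambda-\lambda^2$ and $c-\lambda^2$ are endpoints of basic intervals only from rank $2$ on, you must take $k_0=2$; at that rank the factor $K\cap[1-\lambda,1]$ is covered by $G_2'=[1-\lambda,1-\lambda+\lambda c]\cup[1-\lambda^2,1]$, which is disconnected. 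Hence Lemma \ref{lem} delivers a union of two intervals --- in the paper's notation $H_3\cup H_4$, resp. $H_6\cup H_7$ --- and one must additionally prove the overlaps
$$\sqrt{\lambda}+\sqrt{1-\lambda+\lambda c}\ \geq\ \sqrt{\lambda-\lambda^2}+\sqrt{1-\lambda^2},\qquad \sqrt{c}+\sqrt{1-\lambda+\lambda c}\ \geq\ \sqrt{c-\lambda^2}+\sqrt{1-\lambda^2},$$
which is exactly the content of the paper's verifications $3_r-4_l\geq 0$ and $6_r-7_l\geq 0$ in the orange region. These inequalities do hold (e.g.\ rewrite the first as $\lambda^2/(\sqrt{\lambda}+\sqrt{\lambda-\lambda^2})\geq\lambda(1-c-\lambda)/(\sqrt{1-\lambda^2}+\sqrt{1-\lambda+\lambda c})$ and use $2\lambda+c-1\geq 0$ together with $\lambda<1/2$), so the gap is fixable, but as written you assert connectedness of these two images without justification. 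The first image $f\bigl(f_1f_3(K),f_2f_3(K)\bigr)$ and the piece $f\bigl(f_3(K),f_3(K)\bigr)$ are fine, since there both factors are single basic intervals at the relevant rank.
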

\begin{proof}
By Lemma \ref{y1} and Lemma \ref{sqrt1}, we  let 
$$I=J=[\lambda -\lambda ^{2},\lambda ]\cup \lbrack c-\lambda ^{2},c]\cup
\lbrack 1-\lambda ,1-\lambda +\lambda c]\cup \lbrack 1-\lambda ^{2},1]. 
$$
Therefore, 
$$f(I,J)=\cup_{i=1}^{10} H_i, H_i=[i_l, i_r], 1\leq i\leq 10,$$
where 
\begin{eqnarray*}
H_1&=&[2\sqrt{\lambda -\lambda ^{2}},2\sqrt{
\lambda }]=[1_l, 1_r]\\
H_2&=&\lbrack \sqrt{\lambda -\lambda ^{2}}+\sqrt{c-\lambda ^{2}},
\sqrt{\lambda }+\sqrt{c}]=[2_l, 2_r] \\
 H_3&=& \lbrack \sqrt{\lambda -\lambda ^{2}}+\sqrt{1-\lambda },\sqrt{\lambda }
+\sqrt{1-\lambda +\lambda c}]=[3_l, 3_r]\\
 H_4&=& \lbrack \sqrt{\lambda -\lambda ^{2}}+\sqrt{
1-\lambda ^{2}},\sqrt{\lambda }+1]=[4_l, 4_r] \\
 H_5&=& \lbrack 2\sqrt{c-\lambda ^{2}},2\sqrt{c}]=[5_l, 5_r]\\
  H_6&=& \lbrack \sqrt{c-\lambda
^{2}}+\sqrt{1-\lambda },\sqrt{c}+\sqrt{1-\lambda +\lambda c}]=[6_l, 6_r]\\
 H_7&=& \lbrack 
\sqrt{c-\lambda ^{2}}+\sqrt{1-\lambda ^{2}},\sqrt{c}+1]=[7_l, 7_r] \\
 H_8&=& \lbrack 2\sqrt{1-\lambda },2\sqrt{1-\lambda +\lambda c}]=[8_l, 8_r]\\
  H_9&=& \lbrack 
\sqrt{1-\lambda }+\sqrt{1-\lambda ^{2}},\sqrt{1-\lambda +\lambda c}+1]=[9_l, 9_r]\\
  H_{10}&=&
\lbrack 2\sqrt{1-\lambda ^{2}},2]=[10_{l}, 10_{r}]
\end{eqnarray*}
Note that $1-\lambda\geq c \geq c^2\geq (1-c-\lambda)^2$ (see Lemma \ref{useful}). Then by  Lemmas \ref{sqrt1} and \ref{lem}
and let $I=[1-\lambda, 1]$,  we have $$\sqrt{K}+\sqrt{K}\subset  \lbrack 2\sqrt{1-\lambda },2].$$
Therefore,  $$H_8\cup  H_9\cup  H_{10}=\lbrack 2\sqrt{1-\lambda },2].$$
By the necessary condition of $\sqrt{K}+\sqrt{K}=[0,2]$, i.e. 
$$1+\sqrt{c}-2\sqrt{1-\lambda }\geq 0,$$
it follows that 
$$H_7\cup H_8\cup  H_9\cup  H_{10}=\left\lbrack \sqrt{c-\lambda ^{2}}+\sqrt{1-\lambda ^{2}},2\right].$$
Now, we prove 
$$6_{r}-7_{l}=\sqrt{c}+\sqrt{1-\lambda +\lambda c}-(\sqrt{c-\lambda ^{2}}+
\sqrt{1-\lambda ^{2}})\geq 0.$$
However, in the orange region of Figure 3, we have 
$$\sqrt{y}+\sqrt{1-x +x y}-(\sqrt{y-x^{2}}+
\sqrt{1-x^{2}})\geq 0.$$
Hence,$$H_6 \cup  H_7\cup H_8\cup  H_9\cup  H_{10}=\lbrack \sqrt{c-\lambda
^{2}}+\sqrt{1-\lambda },2].$$
Similarly, we are able to prove
$$3_{r}-4_{l}=\sqrt{\lambda }+\sqrt{1-\lambda +\lambda c}-(\sqrt{\lambda
-\lambda ^{2}}+\sqrt{1-\lambda ^{2}})\geq 0.$$
Therefore, 
$$H_3\cup H_4=[\sqrt{\lambda -\lambda ^{2}}+\sqrt{1-\lambda }, \sqrt{\lambda}+1].$$
\end{proof}
\begin{remark}\label{remark}
There is a gap between $H_2$ and $H_3$,  that is, generally in the  orange region of Figure 3, $2_r<3_l.$
Similarly, there is a gap between $H_4$ and $H_6$, i.e. we may have $ 4_r<6_l.$
We need  to cover these two gaps by other intervals containing in $\sqrt{K}+\sqrt{K}$, therefore, if these two intervals are covered, then we have that 
$$[0,2]\supset \sqrt{K}+\sqrt{K}\supset \cup_{k=0}^{\infty}(\sqrt{\lambda})^k[\sqrt{\lambda -\lambda ^{2}}+\sqrt{c-\lambda^2 },2]\cup \{0\}=[0,2],$$
where we use the inequality 
$$2\sqrt{\lambda}\geq \sqrt{\lambda -\lambda ^{2}}+\sqrt{c-\lambda^2 }.$$
However, in the orange region (Figure 4), we have 
$$2\sqrt{x}\geq \sqrt{x -x^{2}}+\sqrt{y-x^2 }.$$
Therefore, it remains to cover two gaps. 
\end{remark}
Now we  need to cover  the following gaps, i.e. $$[\sqrt{\lambda}+\sqrt{c}, \sqrt{\lambda-\lambda^2}+\sqrt{1-\lambda}], [\sqrt{\lambda}+1, \sqrt{c-\lambda^2}+\sqrt{1-\lambda}]$$   in terms of the thrid-level basic intervals.

To prove Theorem \ref{orange},  we shall find some interval in $\sqrt{K}+\sqrt{K}$ which covers the interval $$[\sqrt{\lambda}+\sqrt{c}, \sqrt{\lambda-\lambda^2}+\sqrt{1-\lambda}].$$

\begin{figure}
  \centering
  \includegraphics[width=350pt]{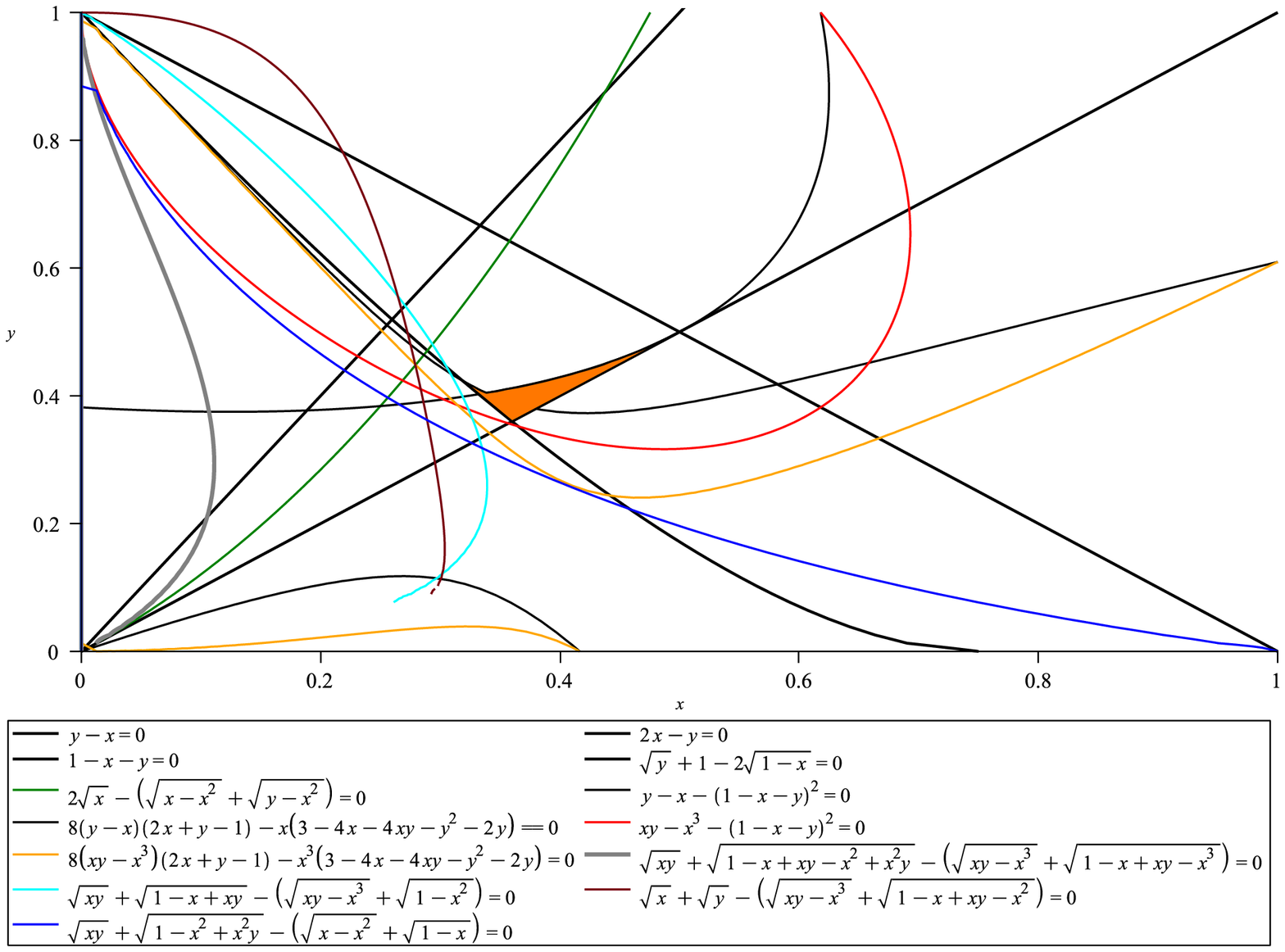}
   \caption{ }
\end{figure}
\begin{figure}
  \centering
  \includegraphics[width=350pt]{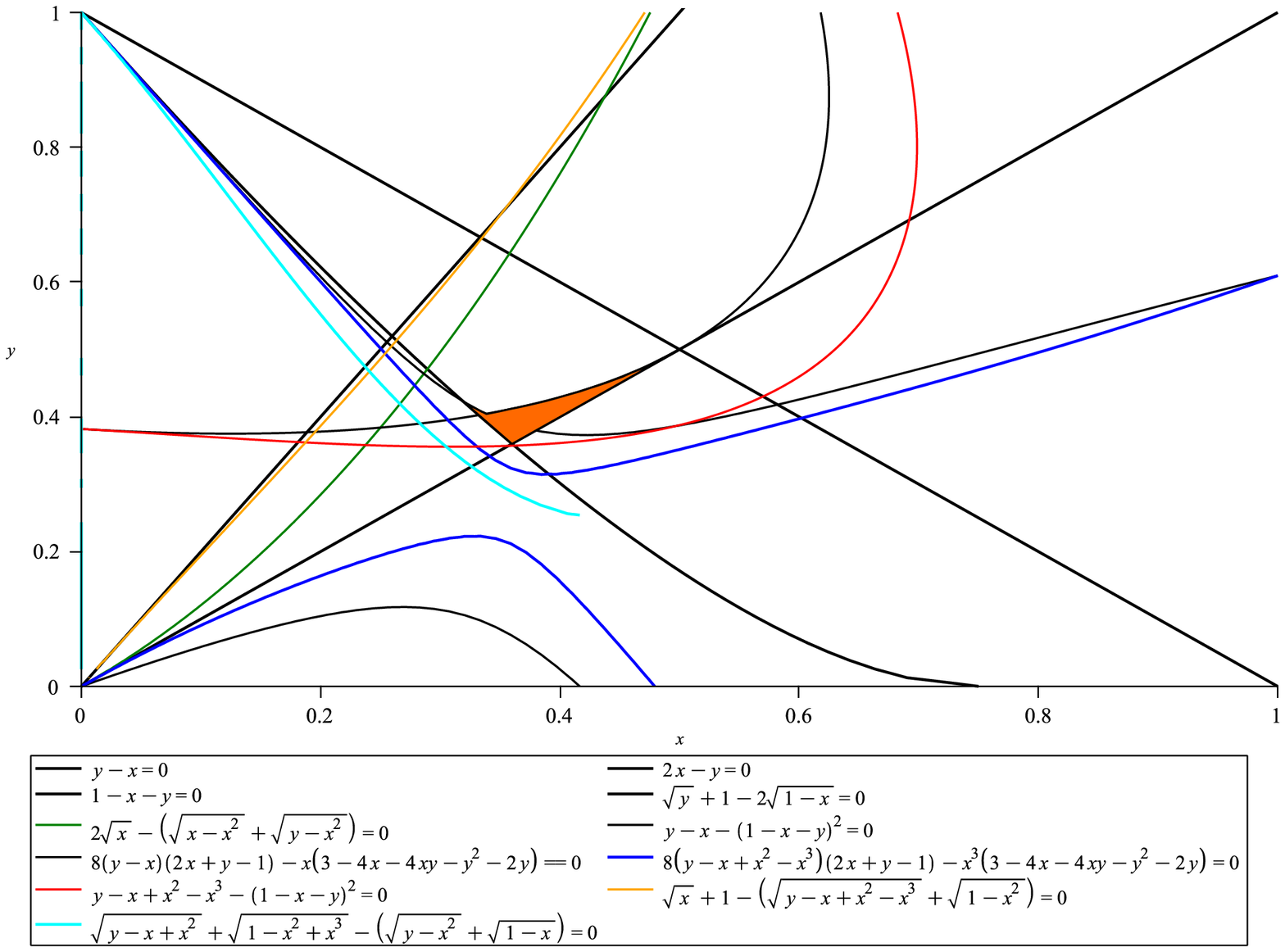}
   \caption{ }
\end{figure}
\begin{lemma}\label{gap1}
In the   orange region of Figure 4, we have 
\begin{equation*}
\left\lbrace\begin{array}{cc}
                c\lambda -\lambda ^{3}\geq (1-c-\lambda
)^{2},\\
             8(c\lambda -\lambda ^{3})(2\lambda +c-1)\geq \lambda ^{3}(3-4\lambda
-4\lambda c-c^{2}-2c)
                \end{array}\right.
\end{equation*}
\end{lemma}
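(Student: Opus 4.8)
The plan is to recognize the two displayed inequalities as precisely the two hypotheses of Lemma \ref{sqrt1}, specialised to one explicit third-level basic interval, and then to read off the conclusion directly from the way the orange region of Figure 4 is defined, exactly as Lemma \ref{y1} was read off from Figure 3. First I would identify the quantities $c\lambda-\lambda^3$ and $\lambda^3$. Taking $I=[0,1]$ one computes $f_3(I)=[1-\lambda,1]$, then $f_2\circ f_3(I)=[c-\lambda^2,c]$, and finally $f_1\circ f_2\circ f_3(I)=[c\lambda-\lambda^3,c\lambda]$. Hence $a=c\lambda-\lambda^3$ and $t=\lambda^3$ are the left endpoint and length of the third-level basic interval $f_1\circ f_2\circ f_3(I)$, which is the interval whose left endpoint is smallest among those entering the third-level covering of the gap $[\sqrt{\lambda}+\sqrt{c},\sqrt{\lambda-\lambda^2}+\sqrt{1-\lambda}]$, and therefore supplies the binding instance of the hypotheses of Lemma \ref{sqrt1} (which are imposed on the smaller left endpoint $a$).

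With this identification the first inequality $c\lambda-\lambda^3\geq(1-c-\lambda)^2$ is exactly the condition $a\geq(1-c-\lambda)^2$, and the second inequality $8(c\lambda-\lambda^3)(2\lambda+c-1)\geq\lambda^3(3-4\lambda-4\lambda c-c^2-2c)$ is exactly the condition $8a(2\lambda+c-1)\geq t(3-4\lambda-4\lambda c-c^2-2c)$, both copied verbatim from Lemma \ref{sqrt1} with $a=c\lambda-\lambda^3$ and $t=\lambda^3$. So Lemma \ref{gap1} is nothing more than the statement that $f_1\circ f_2\circ f_3(I)$ satisfies the hypotheses of Lemma \ref{sqrt1}; proving it is what later licenses applying Lemma \ref{sqrt1} and then Lemma \ref{lem} on this piece to replace $f(I,J)$ by $f(\tilde I,\tilde J)$.

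The proof then is immediate from the definition of the region: the orange region of Figure 4 is, by construction, the set of parameters $(x,y)=(\lambda,c)$ satisfying
\begin{equation*}
\left\lbrace\begin{array}{cc}
xy-x^3\geq(1-y-x)^2,\\
8(xy-x^3)(2x+y-1)\geq x^3(3-4x-4xy-y^2-2y),
\end{array}\right.
\end{equation*}
so the two stated inequalities hold at every point of that region by definition, exactly as Lemma \ref{y1} followed from the defining inequalities of the orange region of Figure 3. In particular there is no analytic content in the lemma statement itself beyond this reinterpretation.

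The genuine work --- and the step I expect to be the real obstacle --- is not in the one-line proof but in justifying that this is the correct region to plot: one must verify, numerically or in the spirit of the computer-assisted Lemma \ref{blue1}, that the region defined by the two inequalities is non-empty and that, together with the analogous region handling the second gap $[\sqrt{\lambda}+1,\sqrt{c-\lambda^2}+\sqrt{1-\lambda}]$, it covers the part of the orange region of Figure 3 not already treated. That verification, however, lives downstream of Lemma \ref{gap1}; the lemma as stated reduces to the definition of its own region.
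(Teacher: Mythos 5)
Your proposal is correct and matches the paper's own argument: the paper likewise observes that the orange region of Figure 4 is exactly the set of $(x,y)=(\lambda,c)$ satisfying the two displayed inequalities (which are the hypotheses of Lemma \ref{sqrt1} with $a=c\lambda-\lambda^{3}$ and $t=\lambda^{3}$, i.e.\ for the basic interval $f_{1}\circ f_{2}\circ f_{3}(I)=[\lambda c-\lambda^{3},\lambda c]$), so the lemma holds by the figure's definition of the region. Your added identification of where $c\lambda-\lambda^{3}$ and $\lambda^{3}$ come from, and your remark that the substantive verification is the figure-based covering claim downstream, are both accurate.
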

\begin{proof}
In Figure 4, we note that the remaining region, i.e. the orange region,   satisfies the condition 
\begin{equation*}
\left\lbrace\begin{array}{cc}
                yx-x^{3}\geq (1-y-x
)^{2},\\
             8(xy -x^{3})(2x+y-1)\geq x^{3}(3-4x
-4xy-y^{2}-2y).
                \end{array}\right.
\end{equation*}
Therefore, we are able to use Lemma \ref{sqrt1}. 
\end{proof}
\begin{lemma}\label{gap1}
In the   orange region of Figure 3,  i.e. $(\lambda,c)$ satisfies the following conditions
\begin{equation*}
\left\lbrace\begin{array}{cc}
                c\lambda -\lambda ^{3}\geq (1-c-\lambda
)^{2},\\
             8(c\lambda -\lambda ^{3})(2\lambda +c-1)\geq \lambda ^{3}(3-4\lambda
-4\lambda c-c^{2}-2c)
                \end{array}\right.
\end{equation*}
Then $\sqrt{K}+\sqrt{K}\supset [\sqrt{\lambda c-\lambda ^{3}}+
\sqrt{1-\lambda +\lambda c-\lambda ^{2}},\sqrt{\lambda c}+\sqrt{1-\lambda
^{2}+\lambda ^{2}c}]$.
\end{lemma}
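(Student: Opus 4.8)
The plan is to realise the displayed interval as $f(I,J)$ for a single pair of rank-$3$ basic intervals, where $f(x,y)=\sqrt{x}+\sqrt{y}$, and then to pass from $I,J$ to $K$ by the transfer lemma (Lemma \ref{lem}). Concretely I would set
$$I=f_1f_2f_3(H)=[\lambda c-\lambda ^3,\ \lambda c],\qquad J=f_3f_2f_1(H)=[1-\lambda+\lambda c-\lambda ^2,\ 1-\lambda+\lambda c-\lambda ^2+\lambda ^3],$$
which are basic intervals of rank $3$ and common length $t=\lambda ^3$ (composing the maps $f_1,f_2,f_3$ confirms the endpoints). Writing $I=[a,a+t]$ and $J=[b,b+t]$ with $a=\lambda c-\lambda ^3$ and $b=1-\lambda+\lambda c-\lambda ^2$, monotonicity of the square root gives $f(I,J)=[\sqrt{a}+\sqrt{b},\ \sqrt{a+t}+\sqrt{b+t}]$, whose left endpoint is exactly $\sqrt{\lambda c-\lambda ^3}+\sqrt{1-\lambda+\lambda c-\lambda ^2}$. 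Hence it suffices to prove $\sqrt{K}+\sqrt{K}\supset f(I,J)$.

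The next step is to check the hypotheses of Lemma \ref{sqrt1} for this pair. The inequality $b\ge a$ is clear since $b-a=(1-\lambda)(1-\lambda ^2)>0$; indeed all of $J$ lies to the right of all of $I$. The remaining two hypotheses of Lemma \ref{sqrt1}, namely $a\ge(1-c-\lambda)^2$ and $8a(2\lambda+c-1)\ge t(3-4\lambda-4\lambda c-c^2-2c)$, become, after substituting $a=c\lambda-\lambda ^3$ and $t=\lambda ^3$, precisely the two inequalities assumed in the statement; here Lemma \ref{useful} supplies $2\lambda+c-1\ge0$, which is what keeps the left-hand side of the second inequality nonnegative.

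To invoke Lemma \ref{lem} I must know that $f(I',J')=f(\widetilde{I'},\widetilde{J'})$ for every rank-$n$ sub-pair $I'\subset G_n$, $J'\subset G_n'$, not merely for the top pair $(I,J)$. This is the step I expect to require the most care, but it reduces to a monotonicity observation: refining to a sub-interval only raises the left endpoints $a,b$ and shrinks the common length $t$ by a factor $\lambda<1$. Since $2\lambda+c-1\ge0$, the quantity $8a(2\lambda+c-1)$ is nondecreasing while $t(3-4\lambda-4\lambda c-c^2-2c)$ is nonincreasing under refinement, and both $a\ge(1-c-\lambda)^2$ and $b\ge a$ persist (the latter because $J'$ always stays to the right of $I'$). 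Thus the top-level pair $(I,J)$ is the worst case, and the hypothesis of Lemma \ref{sqrt1} holds at every level. Applying Lemma \ref{lem} with $[A,B]=I$ and $[M,N]=J$ then yields $f(K\cap I,K\cap J)=f(I,J)$, and since $K\cap I,K\cap J\subset K$ we obtain $\sqrt{K}+\sqrt{K}\supset f(I,J)$, the asserted containment.
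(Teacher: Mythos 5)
Your choice of $J$ is too small, and as a result your $f(I,J)$ does not reach the right endpoint of the claimed interval. With $J=f_3f_2f_1(H)=[b,b+t]$, $b=1-\lambda+\lambda c-\lambda^{2}$, $t=\lambda^{3}$, you obtain $f(I,J)=[\sqrt{a}+\sqrt{b},\sqrt{a+t}+\sqrt{b+t}]$ with right endpoint $\sqrt{\lambda c}+\sqrt{1-\lambda+\lambda c-\lambda^{2}+\lambda^{3}}$, whereas the lemma asserts the interval up to $\sqrt{\lambda c}+\sqrt{1-\lambda^{2}+\lambda^{2}c}$. These do not agree:
$$(1-\lambda+\lambda c-\lambda^{2}+\lambda^{3})-(1-\lambda^{2}+\lambda^{2}c)=\lambda(1-\lambda)\bigl(c-(1+\lambda)\bigr)<0,$$
since $c<1-\lambda<1+\lambda$, so your interval falls strictly short on the right. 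You checked only that the left endpoints match and never verified the right endpoint, which is where the argument breaks.

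The paper's proof keeps your $I=[\lambda c-\lambda^{3},\lambda c]$ but takes $J$ to be the union of \emph{all} third-level basic intervals lying between $1-\lambda+\lambda c-\lambda^{2}$ and $1-\lambda^{2}+\lambda^{2}c$, namely
$$J=[1-\lambda+\lambda c-\lambda^{2},\,1-\lambda+\lambda c-\lambda^{2}+\lambda^{2}c]\cup[1-\lambda+\lambda c-\lambda^{3},\,1-\lambda+\lambda c]\cup[1-\lambda^{2},\,1-\lambda^{2}+\lambda^{2}c],$$
so that $f(I,J)$ is a union of three intervals, the last of which ends at $\sqrt{\lambda c}+\sqrt{1-\lambda^{2}+\lambda^{2}c}$. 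This creates an obligation your proof never faces: one must show the three image intervals overlap consecutively (the paper verifies $\sqrt{\lambda c}+\sqrt{1-\lambda+\lambda c-\lambda^{2}+\lambda^{2}c}\geq\sqrt{\lambda c-\lambda^{3}}+\sqrt{1-\lambda+\lambda c-\lambda^{3}}$ and $\sqrt{\lambda c}+\sqrt{1-\lambda+\lambda c}\geq\sqrt{\lambda c-\lambda^{3}}+\sqrt{1-\lambda^{2}}$ on the relevant parameter region) before concluding that $f(I,J)$ is the single interval claimed. Your verification of the hypotheses of Lemma \ref{sqrt1} for $a=\lambda c-\lambda^{3}$, $t=\lambda^{3}$, and your observation that these hypotheses persist under refinement, are correct and carry over; what is missing is the enlarged $J$ and the accompanying overlap checks.
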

\begin{proof}
 Let $$ I=[\lambda c-\lambda ^{3},\lambda c]$$
$$J=[1-\lambda +\lambda c-\lambda ^{2},1-\lambda +\lambda c-\lambda
^{2}+\lambda ^{2}c]\cup \lbrack 1-\lambda +\lambda c-\lambda ^{3},1-\lambda
+\lambda c]\cup \lbrack 1-\lambda ^{2},1-\lambda ^{2}+\lambda ^{2}c].$$
Therefore,  by Lemmas \ref{gap1} and \ref{sqrt1}, it follows that
\begin{eqnarray*}
f(I,J)&=&[\sqrt{\lambda c-\lambda ^{3}}+\sqrt{1-\lambda +\lambda c-\lambda
^{2}},\sqrt{\lambda c}+\sqrt{1-\lambda +\lambda c-\lambda^2+\lambda^2c}
]\\
&\cup &[\sqrt{\lambda c-\lambda ^{3}}+\sqrt{1-\lambda +\lambda c-\lambda
^{3}},\sqrt{\lambda c}+\sqrt{1-\lambda +\lambda c}
]\\
&\cup & \lbrack \sqrt{\lambda c-\lambda ^{3}}+\sqrt{1-\lambda ^{2}},\sqrt{
\lambda c}+\sqrt{1-\lambda ^{2}+\lambda ^{2}c}].
\end{eqnarray*}
From Figure 4, we have 
$$\sqrt{\lambda c}+\sqrt{1-\lambda +\lambda c-\lambda
^{2}+\lambda ^{2}c}\geq \sqrt{\lambda c-\lambda ^{3}}+\sqrt{1-\lambda
+\lambda c-\lambda ^{3}}$$
 and 
$$\sqrt{\lambda c}+\sqrt{1-\lambda +\lambda c}\geq \sqrt{
\lambda c-\lambda ^{3}}+\sqrt{1-\lambda ^{2}}.$$
Therefore, by Lemmas \ref{gap1}, \ref{sqrt1} and \ref{lem}, it follows that 
$$\sqrt{K}+\sqrt{K}\supset f(I,J)=[\sqrt{\lambda c-\lambda ^{3}}+
\sqrt{1-\lambda +\lambda c-\lambda ^{2}},\sqrt{\lambda c}+\sqrt{1-\lambda
^{2}+\lambda ^{2}c}].$$
\end{proof}
\begin{lemma}\label{gap5}
In the orange region of Figure 4, 
$$[\sqrt{\lambda c-\lambda ^{3}}+
\sqrt{1-\lambda +\lambda c-\lambda ^{2}},\sqrt{\lambda c}+\sqrt{1-\lambda
^{2}+\lambda ^{2}c}]\supset [\sqrt{c}+\sqrt{\lambda}, \sqrt{\lambda-\lambda^2}+\sqrt{1-\lambda}]$$
\end{lemma}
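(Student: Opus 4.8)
The plan is to read the claimed containment as an inclusion of closed intervals and reduce it to two scalar endpoint inequalities. Writing $p=\sqrt{c}+\sqrt{\lambda}$, $q=\sqrt{\lambda-\lambda^{2}}+\sqrt{1-\lambda}$ for the endpoints of the gap, and $P=\sqrt{\lambda c-\lambda^{3}}+\sqrt{1-\lambda+\lambda c-\lambda^{2}}$, $Q=\sqrt{\lambda c}+\sqrt{1-\lambda^{2}+\lambda^{2}c}$ for the endpoints of the covering interval supplied by Lemma \ref{gap1}, the inclusion $[p,q]\subset[P,Q]$ holds if and only if $P\le p$ and $q\le Q$. So the whole lemma reduces to
$$\sqrt{\lambda c-\lambda^{3}}+\sqrt{1-\lambda+\lambda c-\lambda^{2}}\le \sqrt{c}+\sqrt{\lambda}\qquad\text{and}\qquad \sqrt{\lambda-\lambda^{2}}+\sqrt{1-\lambda}\le \sqrt{\lambda c}+\sqrt{1-\lambda^{2}+\lambda^{2}c}.$$

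For the first inequality I would not attempt a term-by-term comparison, since matching $\sqrt{1-\lambda+\lambda c-\lambda^{2}}$ against $\sqrt{c}$ fails for small $\lambda$. Instead I would move everything to one side and rationalize each difference, giving
$$p-P=\frac{c(1-\lambda)+\lambda^{3}}{\sqrt{c}+\sqrt{\lambda c-\lambda^{3}}}+\frac{2\lambda-1+\lambda^{2}-\lambda c}{\sqrt{\lambda}+\sqrt{1-\lambda+\lambda c-\lambda^{2}}}.$$
Here the first fraction is manifestly positive while the second may be negative, so the point is to show the positive contribution dominates. I would bound the two denominators (both lie in fixed ranges forced by $\lambda\le c\le 2\lambda$ and $c<1-\lambda$) and thereby reduce the sign question to a single polynomial inequality in $(\lambda,c)$; this is exactly the content recorded by the orange region of Figure 4.

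The second inequality I would handle identically. Rationalizing yields
$$Q-q=\frac{-\lambda(1-\lambda-c)}{\sqrt{\lambda c}+\sqrt{\lambda-\lambda^{2}}}+\frac{\lambda(1-\lambda+\lambda c)}{\sqrt{1-\lambda^{2}+\lambda^{2}c}+\sqrt{1-\lambda}},$$
where now the first fraction is negative (because $c<1-\lambda$, using $\sqrt{\lambda-\lambda^{2}}=\sqrt{\lambda}\,\sqrt{1-\lambda}$) and the second positive; so again one must show the positive term wins. After clearing denominators this becomes a polynomial inequality that I would certify to hold throughout the orange region.

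The main obstacle is precisely this final certification. In both inequalities a naive pairing of radicals goes the wrong way, so one genuinely needs the region hypotheses $c\ge(1-\lambda)^{2}$, $\lambda\le c\le 2\lambda$, $c<1-\lambda$ and $\sqrt{c}+1\ge 2\sqrt{1-\lambda}$ (equivalently, the defining inequalities of the orange region, together with Lemma \ref{useful}) to pin down the sign of a two-variable polynomial. I expect to close the argument by verifying that polynomial inequality over the orange region, which is the role Figure 4 plays here exactly as in the surrounding lemmas.
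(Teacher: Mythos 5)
Your reduction of the containment to the two endpoint inequalities $P\le p$ and $q\le Q$, each ultimately certified over the orange region of Figure 4, is exactly the paper's proof — the paper simply asserts those same two inequalities hold in that region. Your rationalizations are algebraically correct and add useful detail on why a term-by-term comparison fails, but the substance of the argument is the same.
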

\begin{proof}
In Figure 4, the if $(\lambda, c)$ is in the orange region, then 
$$\sqrt{\lambda c-\lambda ^{3}}+\sqrt{1-\lambda +\lambda
c-\lambda ^{2}}\leq \sqrt{\lambda }+\sqrt{c}$$
and 
$$\sqrt{\lambda c}+\sqrt{1-\lambda ^{2}+\lambda ^{2}c}\geq \sqrt{\lambda
-\lambda ^{2}}+\sqrt{1-\lambda }.$$
\end{proof}
\begin{lemma}\label{gap2}
In the   orange region of Figure 5, we have 
\begin{equation*}
\left\lbrace\begin{array}{cc}
                 c-\lambda +\lambda ^{2}-\lambda ^{3}\geq (1-c-\lambda )^{2},\\
           8(c-\lambda +\lambda ^{2}-\lambda ^{3})(2\lambda +c-1)\geq \lambda
^{3}(3-4\lambda -4\lambda c-c^{2}-2c).
                \end{array}\right.
\end{equation*}
\begin{proof}
In Figure 5, we note that  the orange region   satisfies the condition 
\begin{equation*}
\left\lbrace\begin{array}{cc}
                 y-x +x^{2}-x^{3}\geq (1-y-x )^{2},\\
           8(y-x +x^{2}-x^{3})(2x +y-1)\geq x
^{3}(3-4x -4x y-y^{2}-2y).
                \end{array}\right.
\end{equation*}
Therefore, we are able to use Lemma \ref{sqrt1}. 
\end{proof}
\end{lemma}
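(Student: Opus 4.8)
The plan is to read the two displayed inequalities as the two hypotheses of Lemma \ref{sqrt1}, specialized to the rank-three basic interval of common length $t=\lambda^{3}$ whose left endpoint is $a=c-\lambda+\lambda^{2}-\lambda^{3}$ (this is the left endpoint of $f_{2}\circ f_{1}\circ f_{3}(I)$, since $f_{3}(0)=1-\lambda$, $f_{1}(1-\lambda)=\lambda-\lambda^{2}$, and $f_{2}(\lambda-\lambda^{2})=c-\lambda+\lambda^{2}-\lambda^{3}$). Substituting $a=c-\lambda+\lambda^{2}-\lambda^{3}$ and $t=\lambda^{3}$ into the condition $a\geq(1-c-\lambda)^{2}$ reproduces the first inequality, and into $8a(2\lambda+c-1)\geq t(3-4\lambda-4\lambda c-c^{2}-2c)$ reproduces the second. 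Thus the only content of the lemma is to certify that this particular $(a,t)$ lands inside the feasibility domain of Lemma \ref{sqrt1}; once this is known, the relation $f(I,J)=f(\tilde I,\tilde J)$ together with Lemma \ref{lem} can be applied to the associated basic interval, which is what will ultimately cover the remaining gap $[\sqrt{\lambda}+1,\sqrt{c-\lambda^{2}}+\sqrt{1-\lambda}]$.

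Concretely, I would put $x=\lambda$ and $y=c$ and rewrite the target pair as
\[
y-x+x^{2}-x^{3}\geq(1-y-x)^{2},\qquad 8(y-x+x^{2}-x^{3})(2x+y-1)\geq x^{3}(3-4x-4xy-y^{2}-2y).
\]
The orange region of Figure 5 is by construction the locus of admissible parameters $(\lambda,c)$ --- those already obeying $\lambda\leq c\leq 2\lambda$, $c+\lambda<1$ and $\sqrt{c}+1\geq 2\sqrt{1-\lambda}$ --- that additionally satisfy this pair. Hence for every $(\lambda,c)$ in that region the two inequalities hold by definition, and the claim is immediate. This is exactly the mechanism behind Lemmas \ref{blue1}, \ref{y1} and \ref{gap1}: each records that a shaded region of a figure sits inside the applicability range of Lemma \ref{sqrt1} for an appropriately chosen basic interval, and here the interval is the one with left endpoint $c-\lambda+\lambda^{2}-\lambda^{3}$.

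The genuine difficulty is computational rather than logical. The two inequalities are cubic in $\lambda$ and quadratic in $c$, and their common solution set has a curved boundary with no clean closed form, which is why the region is presented pictorially and verified \emph{with the help of computer}. If a purely analytic argument were wanted, the hard part would be to show that both inequalities persist throughout the relevant portion of the red region of Figure 1. I would attempt this using the standing constraints of Lemma \ref{useful} --- in particular $c\geq(1-\lambda)^{2}$ and $2\lambda+c-1\geq 0$ --- to factor the positive quantity $2\lambda+c-1$ out of the left-hand side of the second inequality and then reduce the remaining estimate to a monotonicity statement in $\lambda$ for fixed $c$; the first inequality $c-\lambda+\lambda^{2}-\lambda^{3}\geq(1-c-\lambda)^{2}$ would be handled similarly by bounding each side. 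Since the lemma only asserts that the region so defined satisfies the inequalities, however, the cleanest route is simply to take those defining inequalities as the statement itself.
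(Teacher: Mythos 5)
Your proposal is correct and matches the paper's own argument: Lemma \ref{gap2} is proved there exactly as you say, by observing that the orange region of Figure 5 is (by computer-assisted, pictorial verification) precisely the locus of admissible $(\lambda,c)$ satisfying the two displayed inequalities, which are the hypotheses of Lemma \ref{sqrt1} specialized to $a=c-\lambda+\lambda^{2}-\lambda^{3}$ and $t=\lambda^{3}$. Your identification of this $a$ as the left endpoint of the rank-three basic interval $f_{2}\circ f_{1}\circ f_{3}(I)$ is a helpful gloss the paper omits, but the underlying proof is the same.
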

Now, we consider the gap between $H_4= \lbrack \sqrt{\lambda -\lambda ^{2}}+\sqrt{
1-\lambda ^{2}},\sqrt{\lambda }+1]$ and $H_6=\lbrack \sqrt{c-\lambda
^{2}}+\sqrt{1-\lambda },\sqrt{c}+\sqrt{1-\lambda +\lambda c}],$ i.e. we shall cover the interval $[\sqrt{\lambda}+1, \sqrt{c-\lambda^2}+\sqrt{1-\lambda}]$ by some subset of $\sqrt{K}+\sqrt{K}.$
\begin{lemma}\label{gap3}
In the  orange region of Figure 5, 
$$[\sqrt{c-\lambda +\lambda ^{2}-\lambda ^{3}}+\sqrt{1-\lambda ^{2}},\sqrt{
c-\lambda +\lambda ^{2}}+\sqrt{1-\lambda ^{2}+\lambda ^{3}}]\supset [\sqrt{\lambda}+1, \sqrt{c-\lambda^2}+\sqrt{1-\lambda}]. $$
\end{lemma}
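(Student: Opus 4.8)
The plan is to reduce the claimed interval inclusion to two scalar endpoint comparisons and then to certify each over the orange region of Figure 5, exactly in the spirit of the proof of Lemma \ref{gap5}. Writing the left-hand interval as $[A_1,A_2]$ and the right-hand interval as $[B_1,B_2]$, where
$$A_1=\sqrt{c-\lambda+\lambda^2-\lambda^3}+\sqrt{1-\lambda^2},\quad A_2=\sqrt{c-\lambda+\lambda^2}+\sqrt{1-\lambda^2+\lambda^3},$$
$$B_1=\sqrt{\lambda}+1,\quad B_2=\sqrt{c-\lambda^2}+\sqrt{1-\lambda},$$
the inclusion $[A_1,A_2]\supset[B_1,B_2]$ of closed intervals holds precisely when $A_1\leq B_1$ and $B_2\leq A_2$. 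Thus the whole lemma comes down to the two inequalities
$$\sqrt{c-\lambda+\lambda^2-\lambda^3}+\sqrt{1-\lambda^2}\leq \sqrt{\lambda}+1$$
and
$$\sqrt{c-\lambda^2}+\sqrt{1-\lambda}\leq \sqrt{c-\lambda+\lambda^2}+\sqrt{1-\lambda^2+\lambda^3}.$$

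First I would treat the left-endpoint inequality $A_1\leq B_1$. Pairing the radicals that are monotone in $c$ against each other, I would rewrite it as $\sqrt{c-\lambda+\lambda^2-\lambda^3}-\sqrt{\lambda}\leq 1-\sqrt{1-\lambda^2}$. The right-hand side is manifestly nonnegative for $0<\lambda<1$, and the left-hand side is controlled using $c\leq 2\lambda$ from Lemma \ref{fund} together with $c\geq(1-\lambda)^2$ from Lemma \ref{useful}: indeed $c-\lambda+\lambda^2-\lambda^3-\lambda=(c-2\lambda)+\lambda^2(1-\lambda)$, whose sign I would track against the positive gap $1-\sqrt{1-\lambda^2}$ on the right.

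Next I would handle the right-endpoint inequality $B_2\leq A_2$. Here the two radicals cannot be compared term by term, since $\sqrt{c-\lambda+\lambda^2}\geq\sqrt{c-\lambda^2}$ is equivalent to $2\lambda^2\geq\lambda$, i.e. $\lambda\geq 1/2$, which fails in this regime. Instead the estimate rests on the cancellation between the two differences: rationalising gives $\sqrt{c-\lambda+\lambda^2}-\sqrt{c-\lambda^2}=\lambda(2\lambda-1)/(\sqrt{c-\lambda+\lambda^2}+\sqrt{c-\lambda^2})$ (negative for small $\lambda$) against $\sqrt{1-\lambda^2+\lambda^3}-\sqrt{1-\lambda}=\lambda(1-\lambda+\lambda^2)/(\sqrt{1-\lambda^2+\lambda^3}+\sqrt{1-\lambda})$ (always positive). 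Clearing denominators reduces the claim to a polynomial inequality in $(\lambda,c)$.

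The main obstacle is precisely the one the paper finesses throughout Section 3: sums of two square roots cannot be cleared by a single squaring, and repeated squaring produces high-degree polynomial conditions whose sign is awkward to certify by hand. The cleanest route, consistent with Lemmas \ref{gap5}, \ref{y1} and \ref{gap1}, is to recast both displayed inequalities in the region coordinates $(x,y)=(\lambda,c)$ and verify them over the orange region of Figure 5 with computer assistance; the lemma then follows at once from the two endpoint comparisons $A_1\leq B_1$ and $B_2\leq A_2$.
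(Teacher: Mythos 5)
Your proposal is correct and follows essentially the same route as the paper: the paper likewise reduces the inclusion to exactly the two endpoint inequalities $\sqrt{c-\lambda+\lambda^2-\lambda^3}+\sqrt{1-\lambda^2}\leq\sqrt{\lambda}+1$ and $\sqrt{c-\lambda^2}+\sqrt{1-\lambda}\leq\sqrt{c-\lambda+\lambda^2}+\sqrt{1-\lambda^2+\lambda^3}$, and then certifies both over the orange region of Figure 5 by computer. Your intermediate rationalisations are accurate but, like the paper, you ultimately rely on the region-based verification rather than a closed-form argument.
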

\begin{proof}
By Lemmas \ref{gap2},  \ref{sqrt1}  and \ref{lem}   we  cover the  $[\sqrt{\lambda}+1, \sqrt{c-\lambda^2}+\sqrt{1-\lambda}]$ 
by
$$f(I,J)=[\sqrt{c-\lambda +\lambda ^{2}-\lambda ^{3}}+\sqrt{1-\lambda ^{2}},\sqrt{
c-\lambda +\lambda ^{2}}+\sqrt{1-\lambda ^{2}+\lambda ^{3}}],$$
where $$I=[c-\lambda +\lambda ^{2}-\lambda ^{3},c-\lambda +\lambda ^{2}] \mbox{ and }
J=[1-\lambda ^{2},1-\lambda ^{2}+\lambda ^{3}]$$
it remains  to prove
\begin{equation*}
\left\lbrace\begin{array}{cc}
                \sqrt{c-\lambda +\lambda ^{2}-\lambda ^{3}}+\sqrt{1-\lambda ^{2}}\leq \sqrt{%
\lambda }+1,\\
          \sqrt{c-\lambda ^{2}}+\sqrt{1-\lambda }\leq \sqrt{c-\lambda
+\lambda ^{2}}+\sqrt{1-\lambda ^{2}+\lambda ^{3}}
                \end{array}\right.
\end{equation*}
However, from the Figure 5 we know the inequalities hold. 
\end{proof}
\begin{proof}[\textbf{Proof of Theorem \ref{orange}}]
The proof of Theorem \ref{orange} follows from 
Lemma \ref{gap6},  Remark \ref{remark}, and Lemmas \ref{gap1}, \ref{gap5}, and  \ref{gap3}. 
\end{proof}
\begin{proof}[\textbf{Proof of Theorem \ref{Main1}}]
 Theorem \ref{Main1} follows from 
Theorems \ref{orange} and \ref{blue}.
\end{proof}
\section{Final remarks}
\noindent We pose the following question
\begin{question}
Give a necessary and sufficient condition such that $\dfrac{K}{K}=[0,\infty).$
\end{question}
  \section*{Acknowledgements}
  The author would like to thank the anonymous referees for many suggestions and remarks.
The work is supported by National Natural Science Foundation of China 
(Nos. 11701302, 11671147). The work is also supported by K.C. Wong Magna Fund in Ningbo University.


\end{document}